\documentclass[10pt]{article}
\usepackage[a4paper,margin=19mm]{geometry}
\usepackage{iftex}
\ifPDFTeX
  \usepackage[T1]{fontenc}
  \usepackage[utf8]{inputenc}
  \usepackage{lmodern}
\else
  \usepackage{fontspec}
\fi
\usepackage{amsmath,amssymb,bm,mathtools,amsthm}
\usepackage{graphicx,booktabs,array,multirow,microtype}
\graphicspath{{./}}
\usepackage{caption,subcaption,float,placeins}
\usepackage{algorithm,algpseudocode}
\usepackage{xcolor,hyperref,enumitem,setspace}
\usepackage{siunitx}
\hypersetup{
 colorlinks=true,linkcolor=black,citecolor=black,urlcolor=blue,
 pdftitle={An L-Stable Sequential Two-Stage Fourth-Order Method with ADER Trajectory Derivatives for Stiff Transport--Relaxation Systems},
 pdfauthor={Zhixin Huo and Yangnan Su},
 pdfkeywords={two-derivative time integration, ADER trajectory derivative, L-stability, transport--relaxation systems, asymptotic preservation, uniform accuracy}
}
\newcommand{\UU}{\bm U}\newcommand{\LL}{\mathcal L}\newcommand{\GG}{\mathcal G}
\newcommand{\FF}{\bm F}\newcommand{\HH}{\bm H}\newcommand{\SSS}{\bm S}\newcommand{\qq}{\bm q}
\newcommand{\Dt}{\Delta t}\newcommand{\dd}{\mathrm d}\newcommand{\eps}{\varepsilon}
\newcommand{\RR}{\mathcal R}
\newtheorem{theorem}{Theorem}
\newtheorem{lemma}{Lemma}
\newtheorem{proposition}{Proposition}
\newtheorem{corollary}{Corollary}
\newtheorem{remark}{Remark}
\newtheorem{definition}{Definition}

\title{An L-Stable Sequential Two-Stage Fourth-Order Method\\with ADER Trajectory Derivatives for Stiff Transport--Relaxation Systems}
\author{\begin{tabular}{c}
Zhixin Huo$^{1,2,*}$ and Yangnan Su$^1$\\[3pt]
\small $^1$School of Mathematics and Information Science, Henan Polytechnic University\\
\small Jiaozuo 454003, China\\[2pt]
\small $^2$School of Mechatronical Engineering, Beijing Institute of Technology\\
\small Beijing 100081, China\\[2pt]
\small $^*$Corresponding author: \texttt{zhixinhuo@hpu.edu.cn}
\end{tabular}}
\date{}

\begin{document}
\maketitle

\begin{abstract}
A fully implicit two-stage fourth-order two-derivative time discretization was introduced previously as a temporal method.  This paper closes that sequential integrator for stiff transport--relaxation equations by pairing a conservative finite-volume residual $\mathcal L_h$ with its discrete trajectory derivative $\mathcal G_h^{\rm tr}=D\mathcal L_h\,\mathcal L_h$.  An ADER/Cauchy--Kowalevski predictor provides interface states and physical time derivatives; differentiating the same numerical flux and taking shared face differences yields a conservative approximation $\widetilde{\mathcal G}_h$.  For linear constant-coefficient balance laws, $\widetilde{\mathcal G}_h=\mathcal G_h^{\rm tr}=\mathcal L_h^2$ exactly, although the derivative operator is assembled independently rather than by squaring the residual matrix.  For nonlinear discretizations, the fourth-order temporal theory applies to $\mathcal G_h^{\rm tr}$, while a trajectory-closure consistency estimate controls the ADER approximation.  The two unknown stage vectors are solved successively through two $N$-unknown systems.  The completed step is fourth order and L-stable; the parameter $C_q=5/183$ cancels the leading inverse-power term and changes the deep-stiff amplification from $O(|z|^{-1})$ to $O(|z|^{-2})$.  For fixed compatible spatial spaces, a slow--fast decomposition proves a full-step asymptotic-preserving operator limit with an $O(\delta)$ estimate and gives a preparation-dependent uniform-accuracy classification.  Linear finite-volume, nonlinear relaxation, one- and two-dimensional damping, diffusion-limit, and modal experiments verify the corresponding closure, accuracy, stability, and singular-limit claims within their stated scopes.
\end{abstract}

\noindent\textbf{Keywords:} sequential implicit stages; two-derivative time integration; ADER trajectory derivative; conservative finite-volume method; L-stability; stiff transport--relaxation systems; asymptotic preservation; uniform accuracy

\section{Introduction}
High-order time integration of stiff partial differential equations must balance temporal order, stage coupling, nonlinear algebra, and decay of unresolved fast components.  Classical two-stage Runge--Kutta methods do not achieve fourth order and L-stability simultaneously: the two-stage Gauss method is fourth order but has no stiff decay, whereas the two-stage Radau IIA method is L-stable but third order \cite{HairerWanner}.  Two-derivative methods provide another route because the trajectory derivative $D\mathcal L(U)\mathcal L(U)$ adds temporal information without adding another unknown stage state \cite{ButcherSecondDerivative,ChanTsai2010,SealGucluChristlieb2014,ChouchoulisSchutz2024,GottliebGrantHuShu2022,LiDu2016}.  For a stiff PDE, however, a temporal formula alone is not a complete algorithm.  The spatial residual, its trajectory derivative, the implicit stage solves, and the singular-limit behavior must be defined and analyzed as one closed method.

Lax--Wendroff-type, GRP, and ADER methods obtain temporal derivative information from the governing equations and have supported high-order conservative discretizations for hyperbolic and stiff balance laws \cite{BenArtziLiWarnecke2006,TitarevToro2002,DumbserEnauxToro2008,DumbserToroMunz2008,DuLiHWENO2018}.  Separately, asymptotic-preserving and uniformly accurate methods for relaxation and kinetic limits have established that stability alone does not guarantee correct singular-limit accuracy \cite{JinLevermore1996,JinPareschiToscani1998,JinPareschiToscani2000,GosseToscani2002,CaflischJinRusso1997,BoscarinoRusso2009,HuShuBDF2021,MaHuang2025}.  The purpose here is to connect these two requirements: a conservative physical derivative closure for the implicit two-derivative stages and a full-step analysis of the resulting relaxation limit.

The fully implicit two-stage fourth-order temporal formula used below was introduced in \cite{Huo2026}.  That work established the time discretization and its stiff stability properties but did not provide a conservative spatial derivative closure or a diffusion-limit analysis.  The partitioned IMEX construction in \cite{HuoIMEX2026} and the arbitrary-order anchored family in \cite{HuoArbitrary2026} address different temporal designs.  The present paper instead studies the unsplit implicit formula applied to transport--relaxation systems whose transport and source blocks are both stiff.  A canonical example is
\begin{equation}
 u_t+\nabla\!\cdot\qq=0,\qquad
 \delta\qq_t+a\nabla u=-\qq,\qquad 0<\delta\ll1,
\label{eq:intro-model}
\end{equation}
whose characteristic speed is $O(\delta^{-1/2})$ and relaxation rate is $O(\delta^{-1})$.  Both effects enter the same implicit operator; no transport--source splitting is used.

The main algorithmic idea is to close the temporal method with a conservative pair.  A finite-volume discretization defines $\mathcal L_h$.  Its exact discrete trajectory derivative is
\[
 \mathcal G_h^{\rm tr}(Z)=D\mathcal L_h(Z)\,\mathcal L_h(Z).
\]
At each implicit stage, a local ADER/Cauchy--Kowalevski predictor supplies interface states and their physical time derivatives.  The flux chain rule and shared face differences then construct a conservative approximation $\widetilde{\mathcal G}_h$.  In the linear constant-coefficient case, the face construction satisfies $\widetilde{\mathcal G}_h=\mathcal G_h^{\rm tr}=\mathcal L_h^2$ exactly.  For nonlinear discretizations, the analysis distinguishes the exact trajectory derivative required by the fourth-order ODE formula from an ADER approximation satisfying a stated closure-consistency bound.  ADER therefore acts only as a trajectory-derivative provider; it does not define a second time update or a time-interval flux.

The completed method has three principal advantages.  First, the intermediate state and the new-time state are solved successively, so each nonlinear solve has $N$ unknowns instead of forming one simultaneously coupled $2N$-unknown stage system.  This is a structural implementation advantage, not a universal cost theorem, because evaluating the trajectory derivative and preconditioning the resulting systems also matter.  Second, the one-step map is fourth order and L-stable.  Within the admissible parameter interval, $C_q=5/183$ cancels the leading $1/z$ term and produces $O(|z|^{-2})$ deep-stiff decay.  A Prothero--Robinson calculation also identifies the unavoidable effective third-order window under unresolved nonautonomous stiffness.  Third, for fixed finite-dimensional compatible divergence--gradient pairs, an orthogonal slow--fast decomposition proves a full-step AP limit with an $O(\delta)$ operator estimate and gives a sharp preparation-dependent uniform-accuracy classification.  These singular-limit results are linear, full-step, and fixed-grid statements; they are not a nonlinear or mesh-uniform AP theorem.

Each principal claim is paired with a precise analytical statement and a numerical diagnostic, as summarized in Table~\ref{tab:claim-map}.  This separation is important: the linear finite-volume test verifies the exact ADER face-derivative closure, the nonlinear Jin--Xin test verifies the exact discrete trajectory-derivative implementation and Newton solves, and the AP and uniform-accuracy experiments test the fixed-grid linear theory.
\begin{table}[tbp]
\centering
\caption{Claim--analysis--verification map.  The final column states the scope of the corresponding evidence.}
\label{tab:claim-map}
\begingroup
\footnotesize
\setlength{\tabcolsep}{4pt}
\begin{tabular}{>{\raggedright\arraybackslash}p{.24\linewidth}>{\raggedright\arraybackslash}p{.32\linewidth}>{\raggedright\arraybackslash}p{.35\linewidth}}
\toprule
claim & analytical support & numerical support and scope\\
\midrule
closed conservative derivative coupling & linear face-derivative closure and the trajectory-consistency perturbation estimate & fifth-order reconstructed linear finite-volume test; exact identity to roundoff\\
sequential fourth-order L-stable time map & nonlinear order proof, A/L-stability theorem, stiff and Prothero--Robinson expansions & one- and two-dimensional order tests, scalar stiff decay, and fast-mode damping\\
full-step AP limit & fixed-grid compatible-operator theorem with $O(\delta)$ estimate & direct one-step operator comparison and linear diffusion-limit tests\\
preparation-dependent uniform accuracy & exact-slow, Chapman--Enskog, barrier, and post-layer theorems & single- and multi-mode scans; linear fixed-grid scope\\
nonlinear applicability & exact discrete trajectory derivative and Newton residuals & nonlinear Jin--Xin temporal convergence and viscous-limit evidence; no nonlinear AP theorem claimed\\
\bottomrule
\end{tabular}
\endgroup
\end{table}

The paper is organized as follows.  Section~2 introduces the model systems and compatible energy law.  Section~3 analyzes the temporal map.  Section~4 develops the conservative residual and ADER trajectory-derivative closure.  Sections~5 and 6 prove the AP and uniform-accuracy results.  Section~7 describes the successive implicit solves.  Sections~8--10 present the numerical evidence, and Section~11 concludes the paper.

\section{Fully stiff transport--relaxation models}
\subsection{Linear Goldstein--Taylor and Cattaneo systems}
We first consider
\begin{equation}
 u_t+\nabla\!\cdot\qq=0,\qquad
 \delta\qq_t+a\nabla u=-\qq,
\label{eq:linear-model}
\end{equation}
where $a>0$. In one dimension $\qq=q$, and in two dimensions $\qq=(q_x,q_y)^T$. Cattaneo heat conduction is obtained with $u=T$, $\qq$ the heat flux, $\delta=\tau$, and $a=\kappa$ \cite{Cattaneo1958,Vernotte1958}. The macroscopic Goldstein--Taylor system
\begin{equation}
 \rho_t+j_x=0,\qquad
 j_t+\eps^{-2}\rho_x=-2\eps^{-2}j
\end{equation}
corresponds to $u=\rho$, $q=j$, $\delta=\eps^2/2$, and $a=1/2$.

The equilibrium relation is
\begin{equation}
 \qq=-a\nabla u+O(\delta),
\end{equation}
and consequently
\begin{equation}
 u_t=a\Delta u+O(\delta).
\label{eq:diffusion-limit}
\end{equation}
The numerical challenge is to recover this limit without taking $\Dt=O(\delta)$ or resolving waves of speed $O(\delta^{-1/2})$.

Let $G_h$ and $D_h$ be discrete gradient and divergence operators. The one-dimensional semi-discrete matrix is
\begin{equation}
 A_{h,\delta}^{1D}=
 \begin{bmatrix}
 0&-D_h\\
 -(a/\delta)G_h&-(1/\delta)I
 \end{bmatrix},
\label{eq:A1d}
\end{equation}
and the two-dimensional matrix is
\begin{equation}
 A_{h,\delta}^{2D}=
 \begin{bmatrix}
 0&-D_{x,h}&-D_{y,h}\\
 -(a/\delta)G_{x,h}&-(1/\delta)I&0\\
 -(a/\delta)G_{y,h}&0&-(1/\delta)I
 \end{bmatrix}.
\label{eq:A2d}
\end{equation}
Both off-diagonal transport blocks and the local source block become singular. This is why the phrase \emph{fully stiff} is used throughout the paper.

\begin{lemma}[Semi-discrete energy dissipation]
\label{lem:energy}
Assume that the discrete operators satisfy $D_h=-G_h^*$ in the underlying quadrature inner products. Then
\begin{equation}
 \frac{d}{dt}\,\mathcal E_h(t)=-\|\qq_h(t)\|^2,
 \qquad
 \mathcal E_h(t)=\frac12\left(a\|u_h(t)\|^2+\delta\|\qq_h(t)\|^2\right).
\label{eq:energy-law}
\end{equation}
\end{lemma}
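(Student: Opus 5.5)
The plan is a direct energy computation on the semi-discrete system defined by \eqref{eq:A1d} or \eqref{eq:A2d}. Written out, this system reads
\[
 \frac{d u_h}{dt}=D_h\qq_h,\qquad
 \delta\frac{d\qq_h}{dt}=-aG_hu_h-\qq_h .
\]
The sign convention in \eqref{eq:A1d} gives $u_t=-D_h q$; we interpret $D_h$ so that $u_h'=-D_h\qq_h$, consistent with $u_t+\nabla\cdot\qq=0$, and we keep that sign throughout. In two dimensions, $D_h\qq_h=D_{x,h}q_{x,h}+D_{y,h}q_{y,h}$ and $G_h=(G_{x,h},G_{y,h})^T$. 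The key structural fact is the adjoint relation $D_h=-G_h^*$, the discrete analogue of integration by parts. In two dimensions we read it componentwise, $D_{x,h}=-G_{x,h}^*$ and $D_{y,h}=-G_{y,h}^*$. This reading makes the discrete divergence the negative adjoint of the full discrete gradient.

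First, differentiate $\mathcal E_h$ in time using the inner products in which the adjoint is taken. The quadrature weights are time independent, so
\[
 \frac{d}{dt}\mathcal E_h = a\langle u_h,u_h'\rangle+\delta\langle \qq_h,\qq_h'\rangle .
\]
Second, substitute the equations:
\[
 \frac{d}{dt}\mathcal E_h=-a\langle u_h,D_h\qq_h\rangle-a\langle\qq_h,G_hu_h\rangle-\|\qq_h\|^2 .
\]
Third, apply the adjoint hypothesis to the first term: $\langle u_h,D_h\qq_h\rangle=-\langle G_hu_h,\qq_h\rangle$. The two transport terms then cancel exactly, leaving $-\|\qq_h\|^2$, which is \eqref{eq:energy-law}. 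The factor $a$ placed in front of $\|u_h\|^2$ in $\mathcal E_h$ is exactly what balances the factor $a$ in the flux equation. Likewise, the factor $\delta$ placed in front of $\|\qq_h\|^2$ cancels the $1/\delta$ scaling in $A_{h,\delta}$, so the identity holds uniformly in $\delta$.

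There is no genuine analytical obstacle. The only point needing care is bookkeeping, in three places:
\begin{itemize}
 \item the sign convention linking $D_h$ in \eqref{eq:A1d} to the continuous divergence;
 \item the choice of inner products, meaning the cell-centred quadrature for $u_h$ and the face or vector quadrature for $\qq_h$, in which the adjoint relation must hold;
 \item in two dimensions, checking that the block structure of \eqref{eq:A2d} makes the cross terms cancel component by component.
\end{itemize}
Once the adjoint is defined with respect to the same weighted inner products used in $\mathcal E_h$, the cancellation is immediate, and the real-valued setting ensures that no symmetric-part arguments are needed.
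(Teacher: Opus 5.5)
Your argument is correct and is the same as the paper's proof: take the inner product of $u_t=-D_h\qq$ with $au$ and of $\delta\qq_t=-aG_hu-\qq$ with $\qq$, and use $D_h=-G_h^*$ to cancel the transport terms, which leaves $-\|\qq\|^2$. One fix is needed: your first display writes $du_h/dt=D_h\qq_h$, which contradicts the sign $u_h'=-D_h\qq_h$ you correctly use in the computation, and with the plus sign the two transport terms would add instead of cancel, so correct or delete that display.
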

\begin{proof}
Take the scalar product of $u_t=-D_h\qq$ with $a u$ and of $\delta\qq_t=-aG_hu-\qq$ with $\qq$.  Since $D_h=-G_h^*$, the transport terms cancel and the remaining term is $-\|\qq\|^2$.
\end{proof}
Thus the compatible spatial pair reproduces the physical relaxation dissipation before time discretization.  L-stability is then used to damp the unresolved discrete fast spectrum without corrupting the slow diffusive component.

For a periodic Fourier mode with discrete wave number $\kappa_h$, the coupled longitudinal eigenvalues are
\begin{equation}
 \lambda_{s,f}(\delta)=\frac{-1\pm\sqrt{1-4a\delta\kappa_h^2}}{2\delta}.
\end{equation}
Thus
\begin{equation}
 \lambda_s=-a\kappa_h^2+O(\delta),\qquad
 \lambda_f=-\delta^{-1}+a\kappa_h^2+O(\delta).
\label{eq:slow-fast}
\end{equation}
In two dimensions, a transverse flux mode has eigenvalue $-1/\delta$. The slow mode must remain accurate; the fast modes should disappear when they are not resolved.

\subsection{A nonlinear diffusive Jin--Xin extension}
To test the temporal method beyond constant matrices, we also use
\begin{equation}
 u_t+v_x=0,\qquad
 \delta v_t+a u_x=-(v-f(u)),
\label{eq:nonlinear-jx}
\end{equation}
which is a diffusive scaling of the Jin--Xin relaxation framework \cite{JinXin1995}. Formally,
\begin{equation}
 v=f(u)-a u_x+O(\delta),
\end{equation}
so that
\begin{equation}
 u_t+f(u)_x=a u_{xx}+O(\delta).
\label{eq:viscous-limit}
\end{equation}
The numerical experiment uses $f(u)=u^2/2$, whose limit is viscous Burgers. This example is not used to claim a general nonlinear AP theorem; it tests nonlinear fourth-order temporal behavior and the convergence of the two implicit Newton solves.

\section{Implicit two-stage fourth-order temporal discretization}
\subsection{Two stages and the trajectory derivative}
For the autonomous evolution equation
\begin{equation}
 \UU_t=\LL(\UU),
\end{equation}
define the first trajectory derivative
\begin{equation}
 \GG(\UU)=\frac{\dd}{\dd t}\LL(\UU(t))=D\LL(\UU)\,\LL(\UU).
\label{eq:G}
\end{equation}
Let $\UU^\star$ approximate $\UU(t_n+\Dt/2)$. The first stage is
\begin{equation}
\UU^\star=\UU^n+\frac{\Dt}{4}\bigl[\LL(\UU^n)+\LL(\UU^\star)\bigr]
+\frac{\Dt^2}{48}\bigl[\GG(\UU^n)-\GG(\UU^\star)\bigr].
\label{eq:stage1}
\end{equation}
The complete step is
\begin{align}
\UU^{n+1}={}&\UU^n+\Dt\bigl[a_0\LL(\UU^n)+a_1\LL(\UU^\star)+a_2\LL(\UU^{n+1})\bigr]\nonumber\\
&+C\Dt^2\bigl[\GG(\UU^n)-\GG(\UU^\star)-\tfrac32\GG(\UU^{n+1})\bigr],
\label{eq:stage2}
\end{align}
with
\begin{equation}
 a_0=\frac16+\frac72C,\qquad
 a_1=\frac23-10C,\qquad
 a_2=\frac16+\frac{13}{2}C.
\label{eq:coeffs}
\end{equation}
The L-stable interval proved in \cite{Huo2026} is
\begin{equation}
 C\in[C_-,C_+],\qquad
 C_{\pm}=\frac{25\pm\sqrt{105}}{780}.
\label{eq:Cinterval}
\end{equation}

\subsection{Nonlinear fourth-order consistency}
We include the order argument because the PDE coupling later uses the nonlinear trajectory derivative, not only the scalar stability function.

\begin{theorem}[Fourth-order local consistency]
Assume $\LL\in C^4$ in a neighborhood of the exact trajectory.  Then, for sufficiently small $\Dt$, both stage equations are locally uniquely solvable and the one-step local truncation error of \eqref{eq:stage1}--\eqref{eq:stage2} is $O(\Dt^5)$ for every $C$ in \eqref{eq:Cinterval}.
\end{theorem}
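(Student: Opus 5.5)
The proof is a quadrature/Taylor argument. The central observation is that both stage formulas are Hermite–Birkhoff quadrature rules for $\UU(t_n+\theta\Dt)-\UU(t_n)=\int \LL(\UU(s))\,\dd s$, in which $\GG$ plays the role of the time derivative of the integrand.

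\emph{Solvability.} I would treat each stage as a fixed-point problem in $N$ unknowns. Write the first stage as $\Phi_1(\UU^\star;\Dt)=0$ with
\[
\Phi_1=\UU^\star-\UU^n-\tfrac{\Dt}{4}\bigl[\LL(\UU^n)+\LL(\UU^\star)\bigr]-\tfrac{\Dt^2}{48}\bigl[\GG(\UU^n)-\GG(\UU^\star)\bigr].
\]
At $\Dt=0$ we have $\Phi_1=\UU^\star-\UU^n$, whose Jacobian is $I$. Since $\LL\in C^4$, $\GG=D\LL\,\LL\in C^3$, so the implicit function theorem gives a locally unique $C^3$ solution $\UU^\star(\Dt)$ near $\UU^n$. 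The same argument applies to \eqref{eq:stage2} once $\UU^\star$ is known. There the Jacobian is $I-a_2\Dt\,D\LL-\tfrac32 C\Dt^2 D\GG$, which is invertible for small $\Dt$. This also justifies the successive, $N$-unknown structure of the solves.

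\emph{Stage 1.} Insert the exact solution. Set $H=\Dt/2$ and $f(s)=\LL(\UU(t_n+s))$, so that $f'(s)=\GG(\UU(t_n+s))$. Then \eqref{eq:stage1} reads
\[
\int_0^H f\,\dd s\approx \tfrac H2\bigl(f(0)+f(H)\bigr)+\tfrac{H^2}{12}\bigl(f'(0)-f'(H)\bigr),
\]
which is the two-point Hermite rule. It is exact for polynomials of degree $\le3$, so its residual is $O(H^5)=O(\Dt^5)$. Invertibility of the Jacobian, which is $I+O(\Dt)$, then gives $\UU^\star-\UU(t_n+\Dt/2)=O(\Dt^5)$.

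\emph{Stage 2.} Insert the exact values $\UU(t_n)$, $\UU(t_n+\Dt/2)$ and $\UU(t_{n+1})$, and rescale to $s\in[0,1]$. The step becomes the rule
\[
\int_0^1 p\approx a_0p(0)+a_1p(\tfrac12)+a_2p(1)+C\bigl[p'(0)-p'(\tfrac12)-\tfrac32p'(1)\bigr].
\]
I would split it into two parts. The $C$-independent part, with weights $(\tfrac16,\tfrac23,\tfrac16)$, is Simpson's rule, which is exact up to degree $3$; because of symmetry it is fine to check moments through degree $4$ for the combined rule. The $C$-part is the functional
\[
N(p)=\tfrac72p(0)-10p(\tfrac12)+\tfrac{13}{2}p(1)+p'(0)-p'(\tfrac12)-\tfrac32p'(1).
\]
The key step is to verify that $N(s^k)=0$ for $k=0,\dots,3$, and then to verify exactness of the full rule for degree $4$. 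Checking $k=0,1$ is immediate (e.g.\ $\tfrac72-10+\tfrac{13}2=0$ and $-5+\tfrac{13}2+1-1-\tfrac32=0$); $k=2,3,4$ are routine arithmetic. Exactness through degree $4$ makes the quadrature residual $O(\Dt^6)$ per unit interval scaled by $\Dt$, hence $O(\Dt^5)$ at worst in the step.

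\emph{Stage errors and conclusion.} The stage error $O(\Dt^5)$ in $\UU^\star$ enters the second stage through $\Dt\,a_1\LL(\UU^\star)$ and $C\Dt^2\GG(\UU^\star)$. By the Lipschitz continuity of $\LL$ and $\GG$, this contributes $O(\Dt^6)$. Finally, invertibility of the stage-2 Jacobian converts the residual into a local error $\UU^{n+1}-\UU(t_{n+1})=O(\Dt^5)$. The argument is uniform in $C$, so it holds in particular on \eqref{eq:Cinterval}.

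The main obstacle I expect is the moment verification for $N$ and the Simpson part at degree $4$. If the combined rule turns out to be exact only through degree $3$, the residual is still $O(\Dt^5)$, which is exactly what the claim requires; I would therefore state the degree-$3$ exactness as the needed condition. The other delicate point is bookkeeping the nonlinear Taylor terms involving $D\GG$, which is why the assumption $\LL\in C^4$ is needed.
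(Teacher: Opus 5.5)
Your argument is correct and is essentially the paper's proof in quadrature language. Exactness of your rescaled stage-2 rule on $s^m$, $m=0,\dots,3$, is precisely the paper's condition $\alpha_m=1/(m+1)!$. Stage 1 is the corrected trapezoidal rule that the paper checks by Taylor expansion, and your implicit-function and Lipschitz steps match the paper's.

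One correction: the combined rule is \emph{not} exact for degree $4$. You get $N(s^4)=-5/8$, so the defect on $s^4$ is $(1-75C)/120$, which is nonzero on $[C_-,C_+]$ because $1/75<C_-$; this matches the $z^5$ error constant in Proposition~\ref{prop:accuracy-damping}. Your fallback is therefore the actual argument, not a contingency: degree-$3$ exactness gives a rule error $O(\Dt^4)$ on $[0,1]$, hence an $O(\Dt^5)$ residual after multiplying by $\Dt$.
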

\begin{proof}
Set $F(t)=\LL(\UU(t))$ and let $F^{(m)}$ denote the total time derivative along the exact solution.  At the half time,
\begin{align}
F(t_n+\tfrac12\Dt)={}&F_n+\tfrac12\Dt F_n^{(1)}+\tfrac18\Dt^2F_n^{(2)}
+\tfrac1{48}\Dt^3F_n^{(3)}+O(\Dt^4),\\
F^{(1)}(t_n+\tfrac12\Dt)={}&F_n^{(1)}+\tfrac12\Dt F_n^{(2)}
+\tfrac18\Dt^2F_n^{(3)}+O(\Dt^3).
\end{align}
Substitution of the exact midpoint into the right-hand side of \eqref{eq:stage1} gives
\begin{equation}
 \tfrac12\Dt F_n+\tfrac18\Dt^2F_n^{(1)}+\tfrac1{48}\Dt^3F_n^{(2)}
 +\tfrac1{384}\Dt^4F_n^{(3)}+O(\Dt^5),
\end{equation}
which is exactly the Taylor expansion of $\UU(t_n+\Dt/2)-\UU(t_n)$ through degree four.  Hence the residual obtained by inserting the exact midpoint into the first stage is $O(\Dt^5)$.

To convert the residual estimate into a stage-value estimate, write the first-stage equation as $\Psi_1(Z;\Dt)=0$.  Its derivative with respect to $Z$ is
\begin{equation}
 D_Z\Psi_1=I-\frac{\Dt}{4}D\LL(Z)+\frac{\Dt^2}{48}D\GG(Z).
\end{equation}
At $\Dt=0$ this derivative is the identity.  Continuity therefore gives a uniformly bounded local inverse for sufficiently small $\Dt$.  The mean-value theorem then implies
\begin{equation}
 \UU^\star-\UU(t_n+\Dt/2)=O(\Dt^5).
\label{eq:half-stage-error}
\end{equation}

For the completed step, let $c=(0,1/2,1)$, $b=(a_0,a_1,a_2)$, and $d=C(1,-1,-3/2)$.  When exact values are inserted into \eqref{eq:stage2}, the coefficient multiplying $\Dt^{m+1}F_n^{(m)}$ is
\begin{equation}
\alpha_m=
\begin{cases}
\displaystyle \sum_{i=0}^{2} b_i, & m=0,\\[3mm]
\displaystyle \sum_{i=0}^{2}\frac{b_i c_i^m}{m!}
+\sum_{i=0}^{2}\frac{d_i c_i^{m-1}}{(m-1)!}, & m=1,2,3.
\end{cases}
\label{eq:alpha-coefficients}
\end{equation}
The second sum is absent for $m=0$ because the trajectory-derivative term starts with $F_n^{(1)}$.  Substitution of \eqref{eq:coeffs} gives
\begin{equation}
 \alpha_0=1,\qquad \alpha_1=\frac12,\qquad
 \alpha_2=\frac16,\qquad \alpha_3=\frac1{24}.
\end{equation}
Thus the completed-stage residual evaluated with the exact midpoint and exact endpoint is $O(\Dt^5)$.  Replacing the exact midpoint by the computed value changes the residual by
\begin{equation}
 \Dt a_1\,O(\Dt^5)+C\Dt^2\,O(\Dt^5)=O(\Dt^6),
\end{equation}
because $\LL$ and $\GG$ are locally Lipschitz and \eqref{eq:half-stage-error} holds.  Finally, the derivative of the completed-stage residual with respect to its endpoint argument is
\begin{equation}
 I-a_2\Dt D\LL(Z)+\frac32C\Dt^2D\GG(Z),
\end{equation}
which is again a bounded perturbation of the identity.  The implicit-function theorem gives a locally unique endpoint and converts the $O(\Dt^5)$ residual into
$\UU^{n+1}-\UU(t_{n+1})=O(\Dt^5)$.
\end{proof}

\subsection{Stability function and L-stability}
For $y'=\lambda y$, $z=\lambda\Dt$, the half-stage factor is
\begin{equation}
 H(z)=\frac{1+z/4+z^2/48}{1-z/4+z^2/48}.
\label{eq:H}
\end{equation}
The full-step factor is
\begin{equation}
 R(z)=\frac{1+a_0z+Cz^2+(a_1z-Cz^2)H(z)}{1-a_2z+\tfrac32Cz^2}.
\label{eq:R}
\end{equation}
After eliminating $H$, it can be written as
\begin{equation}
 R(z)=-\frac{(183C-5)z^3+(972C-42)z^2+(1872C-168)z-288}
 {(z^2-12z+48)\,[9Cz^2-(39C+1)z+6]}.
\label{eq:Rreduced}
\end{equation}
\begin{theorem}[Self-contained A- and L-stability]
The stability function \eqref{eq:Rreduced} is A-stable if and only if
\[
 C\in[C_-,C_+],\qquad C_{\pm}=\frac{25\pm\sqrt{105}}{780}.
\]
For every $C$ in this interval it is L-stable.
\end{theorem}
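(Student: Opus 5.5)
The plan is to apply the standard characterisation of A-stability for a rational function: $R$ is A-stable if and only if (i) $R$ is analytic in $\operatorname{Re} z<0$, i.e.\ all poles lie in $\operatorname{Re} z\ge 0$ (in fact we need none in the open left half-plane), and (ii) $|R(iy)|\le 1$ for all real $y$. The maximum principle then gives $|R|\le 1$ on the closed left half-plane. I work throughout with the reduced form \eqref{eq:Rreduced}. I will first check that no cancellation between numerator and denominator alters the pole set for $C$ in the relevant range. If a common factor appears at isolated values of $C$, I treat those values separately.

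\textbf{Step 1 (poles).} The factor $z^2-12z+48$ has roots $6\pm 2\sqrt3\,i$, which lie in the right half-plane. The quadratic $9Cz^2-(39C+1)z+6$ has root sum $(39C+1)/(9C)$ and root product $6/(9C)$. For $C>0$ both are positive, so both roots have positive real part, whether they are real or complex. Since $C_->0$, this settles analyticity on the whole candidate interval. For $C\le 0$ I will show that the order-star or imaginary-axis condition fails anyway. For instance, when $C<0$ the product $6/(9C)$ is negative, so one root is negative real and there is a pole in the left half-plane. This already shows that $C\le0$ is not A-stable, and the case $C=0$ will be covered in Step 2.

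\textbf{Step 2 (imaginary axis).} Write $R=P/Q$ and form the $E$-polynomial
\[
E(y)=|Q(iy)|^2-|P(iy)|^2 .
\]
It is an even polynomial in $y$ of degree at most $8$. The fourth-order consistency (the order theorem proved above) forces $E(y)=O(y^{6})$, because $|R(iy)|^2=1+O(y^{6})$ when the order is at least $4$ (indeed $O(y^{2\lfloor (p+2)/2\rfloor})$). Hence
\[
E(y)=y^6\bigl(e_6(C)+e_8(C)y^2\bigr).
\]
The leading coefficient is $e_8=81C^2\cdot 1-0=81C^2$, because $\deg P=3<4=\deg Q$. It is therefore automatically nonnegative. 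So $E\ge0$ for all $y$ if and only if $e_6(C)\ge0$. I will compute $e_6$ explicitly by expanding $P(iy)$ and $Q(iy)$. This coefficient extraction is the main computational obstacle, and I would do it carefully, possibly with a computer-algebra check. The expectation is that $e_6(C)$ is proportional to $-(390C^2-25C+1)$ up to a positive factor. The roots of $390C^2-25C+1$ are exactly $C_\pm=(25\pm\sqrt{105})/780$, so $e_6\ge0$ precisely on $[C_-,C_+]$. If the computed quadratic instead appears with a different normalisation, I will match it to $780C^2-50C+2$. The values $C_\pm$ are determined by their discriminant $625-4\cdot 390$, which the stated formula pins down. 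The ``only if'' direction follows because $e_6<0$ gives $|R(iy)|>1$ for small $y\ne0$.

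\textbf{Step 3 (L-stability).} Since $\deg P\le 3<4=\deg Q$ whenever $C\neq0$, we have $R(z)\to0$ as $|z|\to\infty$. The interval excludes $0$, so this holds throughout. Combined with A-stability, this gives L-stability. As a remark, $R(z)=O(z^{-1})$ with leading coefficient proportional to $-(183C-5)/(9C)$, which vanishes at $C=5/183$. I will note that $5/183$ lies in $[C_-,C_+]$: indeed $C_-\approx0.0189<0.0273<C_+\approx0.0452$.
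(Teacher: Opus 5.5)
Your overall route is the same as the paper's: you locate the poles through the root sum and product of $9Cz^2-(39C+1)z+6$, use the $E$-polynomial $|Q(iy)|^2-|P(iy)|^2$ on the imaginary axis, and get L-stability from the degree count $3<4$. Using fourth-order consistency to conclude that the $y^0,y^2,y^4$ coefficients of $E$ vanish is a legitimate shortcut; the paper instead multiplies everything out. Your value $e_8=81C^2$ is also correct.

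\textbf{The gap.} It sits in the one step that carries the ``if and only if'': you never compute $e_6(C)$, and the form you predict for it is wrong. The quadratic $390C^2-25C+1$ has discriminant $625-4\cdot390=-935<0$, so it has no real roots at all. In particular $C_\pm=(25\pm\sqrt{105})/780$ are not its roots. If $e_6$ really were a negative multiple of it, then $e_6<0$ for every $C$ and the method would never be A-stable. Your own check ``$625-4\cdot390$'' evaluates to $-935$, not $105$, which should have flagged the error. As written, Step~2 does not establish the stated interval.

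\textbf{The fix.} It is short, because with your shortcut only the $y^6$ coefficient is needed. Write
\[
 |Q(iy)|^2=\bigl[9Cy^4-(900C+18)y^2+288\bigr]^2+\bigl[(147C+1)y^3-(1872C+120)y\bigr]^2,
\]
\[
 |P(iy)|^2=\bigl[288+(972C-42)y^2\bigr]^2+\bigl[(1872C-168)y-(183C-5)y^3\bigr]^2.
\]
The $y^6$ terms come only from $-18C(900C+18)+(147C+1)^2$ in $|Q|^2$ and from $(183C-5)^2$ in $|P|^2$. This gives
\[
 e_6(C)=-28080C^2+1800C-24=-24\,(1170C^2-75C+1).
\]
Its discriminant is $75^2-4\cdot1170=945=9\cdot105$, and its roots are exactly $(25\pm\sqrt{105})/780$. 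This reproduces the paper's identity $E(y)=3y^6(27C^2y^2-9360C^2+600C-8)$. In your normalisation, the correct quadratic is $390C^2-25C+\tfrac13$, not $390C^2-25C+1$.

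With this value the rest of your argument goes through. Note also that $e_6(C)\le-24<0$ for every $C\le0$. The imaginary-axis test alone therefore excludes $C\le0$, and your separate pole and cancellation discussion for $C<0$ is unnecessary.
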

\begin{proof}
Write $R(z)=N_C(z)/D_C(z)$ using \eqref{eq:Rreduced}.  The roots of the factor $z^2-12z+48$ are $6\pm2\sqrt3\,i$.  The remaining factor is
\begin{equation}
 9Cz^2-(39C+1)z+6.
\label{eq:second-den-factor}
\end{equation}
For $C>0$, the sum and product of the two roots of \eqref{eq:second-den-factor} are $(39C+1)/(9C)>0$ and $2/(3C)>0$.  Hence real roots are both positive; a complex-conjugate pair has positive real part.  Therefore $D_C$ has no zero in the closed left half-plane.

For $z=iy$, direct multiplication of $D_C(iy)D_C(-iy)-N_C(iy)N_C(-iy)$ gives
\begin{equation}
 |D_C(iy)|^2-|N_C(iy)|^2
 =3y^6\left(27C^2y^2-9360C^2+600C-8\right).
\label{eq:imaginary-axis-identity}
\end{equation}
If $-9360C^2+600C-8<0$, the right-hand side is negative for all sufficiently small nonzero $y$, so $|R(iy)|>1$ and A-stability is impossible.  Conversely, if
\begin{equation}
 -9360C^2+600C-8\ge0,
\label{eq:C-quadratic-condition}
\end{equation}
then the bracket in \eqref{eq:imaginary-axis-identity} is nonnegative for every real $y$, and therefore $|R(iy)|\le1$ on the imaginary axis.  Solving \eqref{eq:C-quadratic-condition} gives exactly $C\in[C_-,C_+]$.

It remains to pass from the imaginary axis to the full left half-plane.  Fix a radius $r$ and consider the half-disk $\Omega_r=\{z:\operatorname{Re}z<0,\ |z|<r\}$.  The rational function is analytic on a neighborhood of $\overline{\Omega_r}$.  On the diameter, $|R|\le1$.  Because the denominator has degree four and the numerator degree at most three, $R(z)=O(r^{-1})$ uniformly on the left semicircle; hence for all sufficiently large $r$ its modulus there is also at most one.  The maximum-modulus principle gives $|R(z)|\le1$ in $\Omega_r$.  Letting $r\to\infty$ proves A-stability on the complete closed left half-plane.

Finally, the same degree count yields $R(z)\to0$ as $|z|\to\infty$ with $\operatorname{Re}z\le0$.  Together with A-stability, this is L-stability.
\end{proof}

Figure~\ref{fig:complex-stability} displays the computed $|R(z)|=1$ contours for the three parameters used below and agrees with the analytic interval.
\begin{figure}[tbp]
\centering
\includegraphics[width=.94\linewidth]{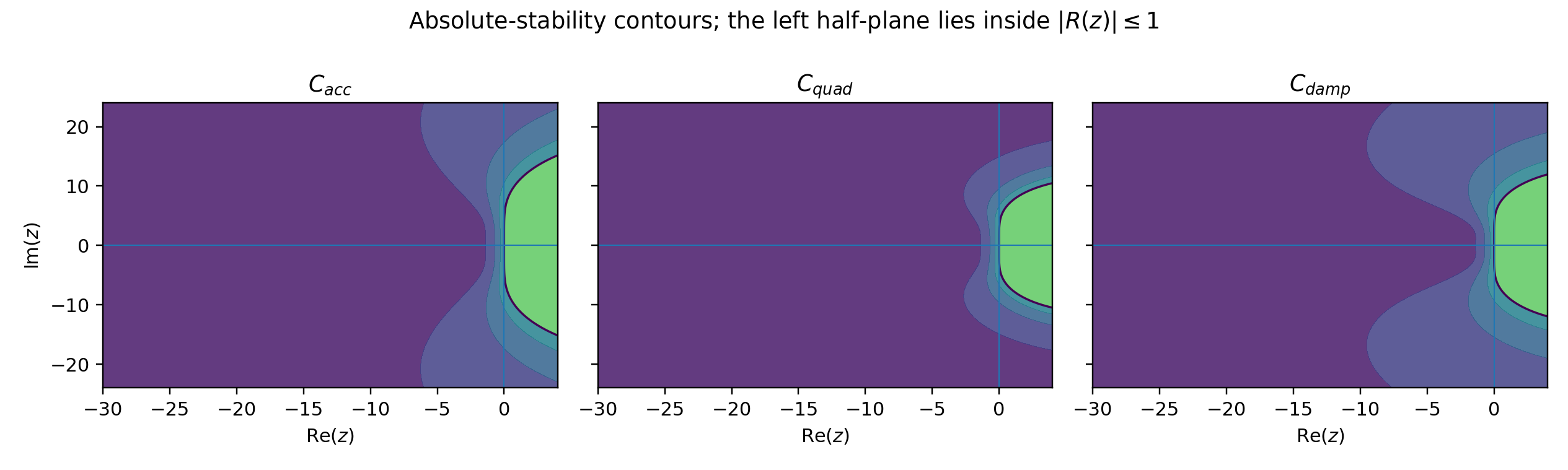}
\caption{Absolute-stability contours for the accuracy endpoint $C_-$, the quadratic-decay value $C_q=5/183$, and the damping endpoint $C_+$.}
\label{fig:complex-stability}
\end{figure}

\subsection{A parameter value with quadratic stiff decay}
The free parameter controls both the local error constant and the asymptotic damping rate.
\begin{proposition}[Accuracy--damping expansion]\label{prop:accuracy-damping}
As $z\to0$,
\begin{equation}
 R(z)-e^z=-\frac{75C-1}{2880}z^5
 -\frac{5850C^2+525C-7}{34560}z^6+O(z^7).
\label{eq:small-expansion}
\end{equation}
As $|z|\to\infty$,
\begin{equation}
 R(z)=\left(-\frac{61}{3}+\frac{5}{9C}\right)\frac1z+O(z^{-2}).
\label{eq:large-expansion}
\end{equation}
Consequently,
\begin{equation}
 C_q=\frac5{183}
\label{eq:Cq}
\end{equation}
cancels the $1/z$ term and gives
\begin{equation}
 R(z)=\frac{314}{5z^2}+O(z^{-3}).
\label{eq:quadratic-decay}
\end{equation}
\end{proposition}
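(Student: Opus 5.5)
Both expansions will be read off the reduced form \eqref{eq:Rreduced}, $R=N_C/D_C$. For the large-$|z|$ statement the route is direct. The numerator has degree three with leading coefficient $-(183C-5)$. The denominator $(z^2-12z+48)[9Cz^2-(39C+1)z+6]$ has degree four with leading coefficient $9C$. Hence
\[
R(z)=\frac{-(183C-5)}{9C}\,\frac1z+O(z^{-2})=\Bigl(-\frac{61}{3}+\frac{5}{9C}\Bigr)\frac1z+O(z^{-2}),
\]
and the coefficient vanishes exactly when $183C=5$, giving \eqref{eq:Cq}. With $C=C_q$ the numerator drops to degree two, with leading coefficient $-(972C_q-42)=42-4860/183=2826/183$. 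Dividing by $9C_q=45/183$ gives $2826/45=314/5$, which is \eqref{eq:quadratic-decay}. I will also check that $C_q\in[C_-,C_+]$: since $C_\pm\approx(25\pm10.25)/780$, we have $C_-\approx0.0189$ and $C_+\approx0.0452$, while $C_q\approx0.0273$.

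For the small-$z$ expansion, I will clear the denominator and compare $N_C(z)$ with $D_C(z)e^z$ through degree six. Write $D_C(z)=\sum_{k=0}^4 d_kz^k$ with $d_0=288$. Expanding the product gives $d_1=-72-1872C$, $d_2=6+972C$, $d_3=-108C$ (from $-12\cdot9C z^3+z^2\cdot(-(39C+1))z+\dots$; recomputed carefully at this step) and $d_4=9C$. The numerator coefficients are $n_0=288$, $n_1=168-1872C$, $n_2=42-972C$ and $n_3=5-183C$, with $n_k=0$ for $k\ge4$. The Theorem on fourth-order local consistency already guarantees $R-e^z=O(z^5)$, so the coefficients of $z^0$ through $z^4$ in $N_C-D_Ce^z$ must vanish. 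I will use these as a check on the $d_k$ rather than proving them again.

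The remaining work is to compute
\[
e_m=n_m-\sum_{k=0}^{4}\frac{d_k}{(m-k)!}\qquad\text{for } m=5,6.
\]
Then $R-e^z=(e_5z^5+e_6z^6)/D_C(z)+O(z^7)$, and dividing by $D_C=288(1+\tfrac{d_1}{288}z+\dots)$ gives the $z^5$ coefficient $e_5/288$. The $z^6$ coefficient is $e_6/288-e_5d_1/288^2$. I expect $e_5/288=-(75C-1)/2880$ to come out linearly. The $z^6$ coefficient becomes quadratic in $C$ through the product $e_5d_1$, which is consistent with the $C^2$ term in \eqref{eq:small-expansion}.

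The main obstacle is bookkeeping rather than ideas: the $z^6$ coefficient mixes the correction from dividing by $D_C$ with the $e_6$ term, and any slip in $d_2$ or $d_3$ propagates into it. To guard against this, I will first verify the vanishing of the order-0 to order-4 coefficients, as noted above. As an independent check, I will also derive the expansion from the unreduced form \eqref{eq:R} using the known series of $H(z)$, which should agree with $e^{z/2}$ up to a $z^5$ error.
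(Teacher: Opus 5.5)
Your route is the paper's. The large-$|z|$ part reads the $1/z$ and $1/z^2$ coefficients off the leading coefficients of $N_C$ and $D_C$, and your arithmetic $-(972C_q-42)/(9C_q)=2826/45=314/5$ is correct. The small-$z$ part is the same power-series division by a denominator with nonzero constant term that the paper uses.

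The genuine problem is your expansion of $D_C$: only $d_0$ and $d_4$ are right. Multiplying out $(z^2-12z+48)[9Cz^2-(39C+1)z+6]$ gives
\[
D_C(z)=9Cz^4-(147C+1)z^3+(900C+18)z^2-(1872C+120)z+288 .
\]
\begin{itemize}
\item $d_1=-(1872C+120)$: you dropped the $-48$ coming from $48\cdot(-(39C+1))$.
\item $d_2=900C+18$: the contributions are $6+12(39C+1)+48\cdot 9C$, and $972C$ is a numerator coefficient, not a denominator one.
\item $d_3=-(147C+1)$: your own parenthetical lists both $-108C$ and $-(39C+1)$, but you kept only the first.
\end{itemize}
With your values, $N_C-D_Ce^z$ already fails to vanish at order one ($n_1-d_0-d_1=-48$). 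Your formula for $e_5$ would then return $-(2/5+39C)$, so neither the $z^5$ nor the $z^6$ coefficient of \eqref{eq:small-expansion} would come out.

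Your planned check of orders $0$--$4$ would catch this. With the corrected $d_k$ the argument goes through. The coefficients of orders $0$ through $4$ vanish identically in $C$. You then get $e_5=(1-75C)/10$ and $e_6=(1-114C)/60$. Hence $e_5/288=-(75C-1)/2880$, which is the $z^5$ term. For the $z^6$ term,
\[
\frac{e_6}{288}-\frac{e_5d_1}{288^2}=\frac{48(1-114C)+(1-75C)(120+1872C)}{829440}=\frac{7-525C-5850C^2}{34560},
\]
which is exactly the stated coefficient. So the method is sound and coincides with the paper's, but the denominator bookkeeping must be redone before the small-$z$ expansion is actually established.
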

\begin{proof}
The Taylor expansion at the origin is obtained by writing the denominator in \eqref{eq:Rreduced} as a power series with nonzero constant term and matching coefficients through degree six; subtraction of the exponential series gives \eqref{eq:small-expansion}.

For the stiff expansion, the denominator polynomial is
\begin{equation}
 D_C(z)=9Cz^4-(147C+1)z^3+(900C+18)z^2-(1872C+120)z+288.
\end{equation}
The leading numerator term is $-(183C-5)z^3$.  Polynomial division therefore gives
\begin{equation}
 [z^{-1}]R=-\frac{183C-5}{9C}=-\frac{61}{3}+\frac{5}{9C},
\end{equation}
which proves \eqref{eq:large-expansion}.  This coefficient vanishes only at $C=5/183$.  At that value the cubic numerator term is absent, and the next numerator coefficient together with the leading denominator coefficient gives
\begin{equation}
 \lim_{z\to\infty}z^2R(z)
 =-\frac{972(5/183)-42}{9(5/183)}=\frac{314}{5}.
\end{equation}
Hence \eqref{eq:quadratic-decay} follows.
\end{proof}

All three parameters lie in the L-stable interval, but their roles differ. The endpoint $C_-$ has the smallest fifth-order error coefficient among the three values used here; $C_q$ has stronger asymptotic damping; and $C_+$ is the upper endpoint of the proven interval. Table~\ref{tab:parameters} and Figure~\ref{fig:parameter-tradeoff} quantify the tradeoff.
\begin{table}[tbp]
\centering
\caption{Three representative parameters. The fourth column is the coefficient of $1/z$ in the stiff expansion.}
\label{tab:parameters}
\begin{tabular}{lcccc}
\toprule
parameter & $C$ & $|[z^5](R-e^z)|$ & $[z^{-1}]R$ & $|R(-10^4)|$\\
\midrule
$C_-$ & 0.0189142 & $1.453\times10^{-4}$ & 9.0391 & $9.005\times10^{-4}$\\
$C_q=5/183$ & 0.0273224 & $3.643\times10^{-4}$ & 0 & $6.262\times10^{-7}$\\
$C_+$ & 0.0451884 & $8.296\times10^{-4}$ & $-8.0391$ & $8.024\times10^{-4}$\\
\bottomrule
\end{tabular}
\end{table}
\begin{figure}[tbp]
\centering
\includegraphics[width=.67\linewidth]{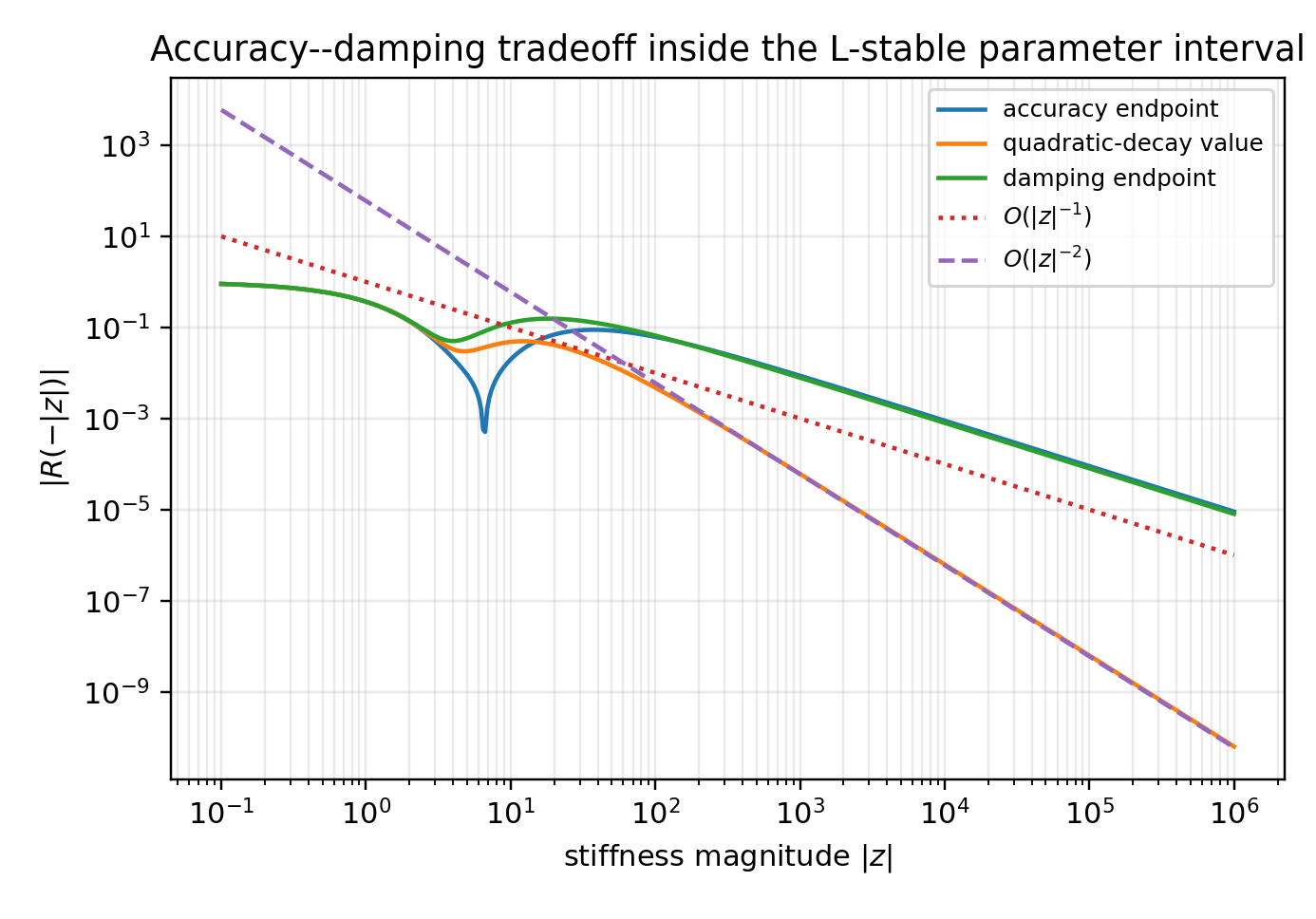}
\caption{Small-error and enhanced-damping parameter choices inside the same L-stable family. The $C_q$ curve eventually follows $O(|z|^{-2})$.}
\label{fig:parameter-tradeoff}
\end{figure}

\subsection{Practical selection of the free parameter}
The parameter should be selected according to the dominant error mechanism rather than by a single universal rule. For smooth moderately stiff dynamics, $C_-$ is the natural default because its fifth-order defect is the smallest among the three representative values. When the solution contains a strongly damped initial layer or transverse relaxation mode, $C_q$ can be preferable because the asymptotic amplification is two powers smaller. The endpoint $C_+$ remains useful as a conservative member of the proven interval and is retained in some damping comparisons because it was emphasized in the original stability study.

The same parameter should normally be used in both stages and throughout a calculation. Switching $C$ according to a local stiffness indicator would change the method into a variable-coefficient integrator and would require a separate order and stability analysis. A safer adaptive strategy changes $\Dt$ while keeping $C$ fixed. In the numerical sections, $C_-$ is used for accuracy and nonlinear convergence tests, $C_+$ is used for the direct comparison with classical stiff decay, and $C_q$ is isolated in the parameter experiment so that the new quadratic-decay mechanism is not conflated with the endpoint results.

\subsection{Stiff nonautonomous defect and effective order}
L-stability controls homogeneous fast modes, but a nonautonomous stiff forcing introduces a separate stiff-order question.  Consider the Prothero--Robinson family \cite{ProtheroRobinson1974,Rang2016}
\begin{equation}
 y'=\lambda\bigl(y-\phi(t)\bigr)+\phi'(t),\qquad y(t_n)=\phi(t_n),\qquad \lambda<0,
\label{eq:PR-general}
\end{equation}
with $z=\lambda\Delta t$.  The required trajectory derivative is
\[
 \dot F=\lambda^2(y-\phi(t))+\phi''(t).
\]
Solving both scalar stages exactly and expanding $\phi$ about $t_n$ yields the following defect.

\begin{proposition}[Prothero--Robinson one-step defect]\label{prop:PR-defect}
Let $\phi\in C^6([0,T])$.  For an exact starting value, one step satisfies
\begin{align}
 y^{n+1}-\phi(t_{n+1})={}&-\phi^{(5)}(t_n)\Delta t^5\,Q_C(z)+\Delta t^6\mathcal E_C(z,t_n,\Delta t),
\label{eq:PR-defect}\\
 Q_C(z)={}&\frac{(69C-1)z^2+(16-960C)z+3600C-48}
 {480(z^2-12z+48)\,[9Cz^2-(39C+1)z+6]}.
\label{eq:PR-Q}
\end{align}
For every $C\in[C_-,C_+]$ there is a constant $K_C$, independent of $z\le0$, $t_n$, and $\Delta t$, such that
\begin{equation}
 |\mathcal E_C(z,t_n,\Delta t)|\le K_C\max_{t\in[t_n,t_{n+1}]}|\phi^{(6)}(t)|.
\label{eq:PR-uniform-remainder}
\end{equation}
Moreover,
\begin{equation}
 Q_C(z)=\frac{69C-1}{4320C}\,z^{-2}+O(z^{-3}),\qquad z\to-\infty.
\label{eq:PR-Q-infty}
\end{equation}
\end{proposition}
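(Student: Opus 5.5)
Because the problem is scalar and linear, we work with the global error $e=y-\phi$ rather than $y$. Along any trajectory $F=\lambda e+\phi'$ and $\dot F=\lambda^2 e+\phi''$. Substituting into \eqref{eq:stage1} with $e^n=0$ and $e^\star=y^\star-\phi(t_n+\Dt/2)$ gives
\[
\Bigl(1-\tfrac z4+\tfrac{z^2}{48}\Bigr)e^\star=\rho_1,
\]
\[
\rho_1=\phi_n-\phi_{1/2}+\tfrac{\Dt}{4}(\phi'_n+\phi'_{1/2})+\tfrac{\Dt^2}{48}(\phi''_n-\phi''_{1/2}).
\]
Here $\rho_1$ is exactly the defect of the first stage applied to $y'=\phi'(t)$, i.e. the quadrature residual. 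Likewise \eqref{eq:stage2} becomes
\[
\Bigl(1-a_2z+\tfrac32Cz^2\Bigr)e^{n+1}=(a_1z-Cz^2)\,e^\star+\rho_2,
\]
where $\rho_2$ is the full-step quadrature residual built from $\phi,\phi',\phi''$ at $t_n,t_{n+1/2},t_{n+1}$. The defect is therefore the explicit rational combination
\[
e^{n+1}=\frac{\rho_2}{P_2(z)}+\frac{(a_1z-Cz^2)\,\rho_1}{P_1(z)P_2(z)},
\]
with $P_1=(z^2-12z+48)/48$ and $P_2=[9Cz^2-(39C+1)z+6]/6$. These are the denominator factors already appearing in \eqref{eq:Rreduced}.

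Next we Taylor expand $\rho_1$ and $\rho_2$ about $t_n$ with Peano-kernel remainders. The first-stage order computation in Theorem~1 shows that $\rho_1=\kappa_1\phi^{(5)}_n\Dt^5+O(\Dt^6\|\phi^{(6)}\|)$, and the conditions $\alpha_0,\dots,\alpha_3$ give $\rho_2=\kappa_2(C)\phi^{(5)}_n\Dt^5+O(\Dt^6\|\phi^{(6)}\|)$. The constants follow from direct evaluation: $\kappa_1$ is the degree-five mismatch of the half-step formula, and $\kappa_2(C)$ equals $\sum b_ic_i^4/24+\sum d_ic_i^3/6-1/120$. Putting these together,
\[
Q_C=-\frac{\kappa_2 P_1+(a_1z-Cz^2)\kappa_1}{P_1P_2}\cdot(\text{normalisation}).
\]
Clearing denominators must reproduce the numerator in \eqref{eq:PR-Q}. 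As a check, at $z=0$ we get $Q_C(0)=(3600C-48)/(480\cdot 288)$, which should equal $-\kappa_2$; its $C$-dependence $\propto 75C-1$ agrees with \eqref{eq:small-expansion}.

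The remainder $\mathcal E_C$ is the same rational expression with $\rho_i$ replaced by their $\Dt^6$ Peano remainders, so it has the form
\[
\frac{r_2}{P_2}+\frac{(a_1z-Cz^2)\,r_1}{P_1P_2},\qquad |r_i|\le c\max|\phi^{(6)}|.
\]
Uniformity in $z\le0$ follows from two facts. First, the proof of A-stability showed that $P_1$ and $P_2$ have no zeros in the closed left half-plane. Second, $1/P_2$ and $(a_1z-Cz^2)/(P_1P_2)$ are proper rational functions of degree $\le0$, hence bounded on $(-\infty,0]$ by continuity and their finite limits at $-\infty$. This bound is the step to write carefully. It requires $C>0$, which keeps $\deg P_2=2$; this holds on $[C_-,C_+]$.

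Finally, \eqref{eq:PR-Q-infty} comes from the leading-term ratio. The numerator has degree $2$ with coefficient $69C-1$, and the denominator has leading term $480\cdot 9C\,z^4$, so $Q_C\sim(69C-1)/(4320C)\,z^{-2}$.

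The main obstacle is purely algebraic: getting $\kappa_1$ and $\kappa_2(C)$ and the combined numerator exactly right. I would verify the result at two independent points. One is the $z=0$ value above. The other is the $z\to-\infty$ limit, where $e^{n+1}\to\rho_2$-type terms weighted by $-C/(\tfrac32C)$ must match $(69C-1)/(4320C)$ after the $\rho_1$ contribution is included.
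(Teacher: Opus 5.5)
Your proposal is correct and follows essentially the same route as the paper: you use the exact scalar error relations $P_1e^\star=\rho_1$ and $P_2e^{n+1}=\rho_2+(a_1z-Cz^2)e^\star$, Taylor expansion with integral (Peano) remainders, boundedness of the rational weights on $(-\infty,0]$ because $P_1,P_2$ have no zeros in the closed left half-plane and the weights are proper, and a leading-coefficient ratio for \eqref{eq:PR-Q-infty}. Isolating the $z$-independent quadrature residuals organizes the paper's coefficient collection more cleanly. With $\kappa_1=-1/23040$ and $\kappa_2=(1-75C)/2880$, the formula $Q_C=-[\kappa_2P_1+(a_1z-Cz^2)\kappa_1]/(P_1P_2)$ reproduces \eqref{eq:PR-Q} exactly, so your extra ``normalisation'' factor is just $1$.
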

\begin{proof}
Let $\phi_n=\phi(t_n)$, $\phi_m=\phi(t_n+\Delta t/2)$, and $\phi_1=\phi(t_n+\Delta t)$.  Put
$e^\star=y^\star-\phi_m$ and $e_1=y^{n+1}-\phi_1$.  Since the starting value is exact,
\begin{equation}
 F_n=\phi_n',\qquad \dot F_n=\phi_n'',
\end{equation}
whereas at a stage time $t$ with error $e$,
\begin{equation}
 F=\lambda e+\phi'(t),\qquad \dot F=\lambda^2e+\phi''(t).
\end{equation}
Substitution into the half-stage equation gives the exact scalar relation
\begin{equation}
 d_1(z)e^\star=r_1,
\qquad
 d_1(z)=1-\frac z4+\frac{z^2}{48},
\label{eq:PR-half-linear}
\end{equation}
with
\begin{equation}
 r_1=\phi_n-\phi_m+\frac{\Delta t}{4}(\phi_n'+\phi_m')
 +\frac{\Delta t^2}{48}(\phi_n''-\phi_m'').
\label{eq:PR-r1}
\end{equation}
The completed stage similarly satisfies
\begin{equation}
 d_2(z)e_1=r_2+(a_1z-Cz^2)e^\star,
\qquad
 d_2(z)=1-a_2z+\frac32Cz^2,
\label{eq:PR-full-linear}
\end{equation}
where
\begin{align}
 r_2={}&\phi_n-\phi_1
 +\Delta t(a_0\phi_n'+a_1\phi_m'+a_2\phi_1')\nonumber\\
 &+C\Delta t^2(\phi_n''-\phi_m''-\tfrac32\phi_1'').
\label{eq:PR-r2}
\end{align}
Consequently,
\begin{equation}
 e_1=\frac{d_1(z)r_2+(a_1z-Cz^2)r_1}{d_1(z)d_2(z)}.
\label{eq:PR-exact-defect-form}
\end{equation}
This identity contains no asymptotic approximation.

Expand each occurrence of $\phi$, $\phi'$, and $\phi''$ in \eqref{eq:PR-r1}--\eqref{eq:PR-r2} about $t_n$ by Taylor's formula with integral remainder.  Direct coefficient collection in the numerator of \eqref{eq:PR-exact-defect-form} shows that the coefficients of $\phi^{(k)}(t_n)\Delta t^k$ vanish for $k=0,1,2,3,4$.  The coefficient of $\phi^{(5)}(t_n)\Delta t^5$ is
\begin{equation}
 -\frac{(69C-1)z^2+(16-960C)z+3600C-48}
 {480(z^2-12z+48)[9Cz^2-(39C+1)z+6]},
\end{equation}
which is $-Q_C(z)$ and proves the leading term in \eqref{eq:PR-defect}.

For the remainder, note that $48d_1(z)=z^2-12z+48$ and $6d_2(z)=9Cz^2-(39C+1)z+6$.  Every integral-remainder coefficient obtained from \eqref{eq:PR-exact-defect-form} is therefore a rational function whose denominator divides
\begin{equation}
 (z^2-12z+48)[9Cz^2-(39C+1)z+6].
\end{equation}
The zeros of both factors lie in the open right half-plane, as proved in the stability theorem.  The coefficients are continuous on $(-\infty,0]$, and their numerator degree is no larger than their denominator degree.  Each coefficient is thus bounded both on every compact subinterval and at negative infinity.  Applying the integral-remainder bound gives \eqref{eq:PR-uniform-remainder}.  Finally, division of the numerator and denominator of \eqref{eq:PR-Q} by $z^4$ yields \eqref{eq:PR-Q-infty}.
\end{proof}

\begin{theorem}[Global error in the unresolved stiff window]
Fix $z_0>0$ and an L-stable parameter $C$.  There exists $\rho=\rho(C,z_0)<1$ such that, whenever $z=\lambda\Delta t\le-z_0$, the numerical error for \eqref{eq:PR-general} satisfies
\begin{equation}
 |e_n|\le \rho^n|e_0|+
 \frac{K_1}{1-\rho}\frac{\Delta t^3}{|\lambda|^2}
 \max_{0\le t\le t_n}|\phi^{(5)}(t)|
 +\frac{K_2}{1-\rho}\Delta t^6
 \max_{0\le t\le t_n}|\phi^{(6)}(t)|.
\label{eq:PR-global}
\end{equation}
Thus exact starting data exhibit an effective third-order global regime while $|\lambda|\Delta t\ge z_0$.  At fixed $\lambda$, refinement eventually enters $z\to0$ and the classical fourth-order global estimate is recovered.
\end{theorem}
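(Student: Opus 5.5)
The plan is to turn the one-step defect of Proposition~\ref{prop:PR-defect} into an error recursion and sum it by a discrete Gronwall (geometric series) argument. Because the Prothero--Robinson problem \eqref{eq:PR-general} is linear, the error $e_n=y^n-\phi(t_n)$ splits exactly into a propagated part and a local defect. Repeating the derivation of \eqref{eq:PR-exact-defect-form} with an inexact start $e_n$ gives $F_n=\lambda e_n+\phi_n'$ and $\dot F_n=\lambda^2e_n+\phi_n''$. The homogeneous contributions are precisely the linear test-equation terms, so
\[
 e_{n+1}=R(z)\,e_n+\delta_n ,
\]
where $\delta_n$ is the one-step defect for exact starting data at $t_n$. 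Proposition~\ref{prop:PR-defect} gives
\[
 \delta_n=-\phi^{(5)}(t_n)\Dt^5Q_C(z)+\Dt^6\mathcal E_C .
\]
I will check that the $e_n$-dependent terms assemble into $R(z)e_n$ by comparing with \eqref{eq:R}: in \eqref{eq:PR-half-linear} the start enters as $(1+z/4+z^2/48)e_n$, and in \eqref{eq:PR-full-linear} as $(1+a_0z+Cz^2)e_n$.

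The first key step is the contraction factor. By A-stability, $|R(z)|\le1$ on $(-\infty,0]$. Since $R(z)\to0$ as $z\to-\infty$, the supremum $\rho=\sup_{z\le -z_0}|R(z)|$ is attained or approached on a compact set or at infinity. I need $|R(z)|<1$ for every real $z<0$. I will argue by contradiction. If $|R(z_1)|=1$ at some negative real $z_1$, then the maximum-modulus principle on the left half-plane, where $R$ is analytic and bounded by $1$, forces $R$ to be a unimodular constant. That contradicts $R(z)\to0$. Hence $\rho<1$, by continuity together with the limit at infinity.

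The second step bounds the defect uniformly in the stiff window. Since $Q_C$ is continuous on $(-\infty,-z_0]$ with the decay \eqref{eq:PR-Q-infty}, I obtain $|Q_C(z)|\le K_1 z^{-2}$ for $z\le -z_0$. The constant is finite because $z^2Q_C(z)$ is continuous and has a finite limit at $-\infty$; the denominator zeros lie in the right half-plane. Consequently
\[
 \Dt^5|Q_C(z)|\le K_1\Dt^5/(\lambda\Dt)^2=K_1\Dt^3/|\lambda|^2 .
\]
The remainder is bounded by \eqref{eq:PR-uniform-remainder} with $K_2=K_C$.

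The third step unrolls the recursion:
\[
 e_n=R^ne_0+\sum_{k<n}R^{n-1-k}\delta_k .
\]
This gives $|e_n|\le\rho^n|e_0|+\sum_k\rho^{n-1-k}\max_k|\delta_k|$, and the geometric sum is at most $1/(1-\rho)$, which yields \eqref{eq:PR-global}. For the closing remarks, the bound $\Dt^3/|\lambda|^2$ is third order in $\Dt$ at fixed $\lambda$, as long as $|z|\ge z_0$. Once $|z|<z_0$, I will use instead the small-$z$ regime: $Q_C(z)=O(1)$, and the standard stability estimate $|R(z)|\le e^{c\Dt}$ holds. Together these give the classical $O(\Dt^4)$ global bound; I will only sketch this part, since the stated theorem concerns the window $z\le-z_0$.

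I expect the main obstacle to be the strict bound $\rho<1$ uniformly on $z\le-z_0$. My first approach is the maximum-modulus argument above. If it is needed, I will supplement it with a direct check from the real-axis behaviour of \eqref{eq:Rreduced}, verifying that $R(z)\neq\pm1$ for $z<0$. The identity $R(z)=1$ reduces to a polynomial equation whose only nonpositive root is $z=0$; the case $R(z)=-1$ needs a similar check.
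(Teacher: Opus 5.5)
Your proposal is correct and follows essentially the same route as the paper's proof: the exact recurrence $e_{n+1}=R(z)e_n+\tau_{n+1}$, the contraction factor $\rho=\sup_{z\le-z_0}|R(z)|<1$, the bound $|Q_C(z)|\le K|z|^{-2}$ that converts $\Delta t^5$ into $\Delta t^3/|\lambda|^2$, and geometric summation. You also fill in two details the paper only asserts: the check that the $e_n$-terms assemble into $R(z)e_n$, and the maximum-modulus argument giving strict $|R(z)|<1$ on the negative axis (the paper just says A-stability and the limit at infinity imply $\rho<1$).
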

\begin{proof}
The error recurrence is
\[
 e_{n+1}=R(z)e_n+\tau_{n+1},
\]
where Proposition~\ref{prop:PR-defect} bounds the local defect.  Because $R$ is continuous on $(-\infty,-z_0]$ and tends to zero at negative infinity, A-stability implies
\[
 \rho:=\sup_{z\le-z_0}|R(z)|<1.
\]
Furthermore, \eqref{eq:PR-Q-infty} and continuity give $|Q_C(z)|\le K/(1+|z|^2)$ on $z\le0$.  Hence
\[
 |\tau_{n+1}|\le K_1\Delta t^5|z|^{-2}\max|\phi^{(5)}|+K_2\Delta t^6\max|\phi^{(6)}|.
\]
Since $\Delta t^5|z|^{-2}=\Delta t^3/|\lambda|^2$, iteration of the recurrence and summation of the geometric series prove \eqref{eq:PR-global}.  The fixed-$\lambda$ fourth-order result follows from the standard zero-stable convergence theorem and the $O(\Delta t^5)$ local defect as $z\to0$.
\end{proof}

The coefficient $69C-1$ cannot vanish inside the L-stable interval because $1/69<C_-$.  Thus the third-order stiff window is an intrinsic tradeoff of this two-stage family rather than a poor parameter choice.  It does not contradict classical fourth order at fixed $\lambda$; sufficiently small time steps always enter the $z\to0$ regime.

\section{Conservative finite-volume operators and ADER trajectory closure}
\subsection{One-dimensional finite-volume residual}
For a control volume $I_i=[x_{i-1/2},x_{i+1/2}]$, define
\begin{equation}
 \bar\UU_i(t)=\frac1{\Delta x_i}\int_{I_i}\UU(x,t)\,\dd x.
\end{equation}
A conservative semi-discrete residual is \cite{ToroBook}
\begin{equation}
 \LL_{h,i}(\bm Z)=-\frac{\widehat\FF_{i+1/2}(\bm Z)-\widehat\FF_{i-1/2}(\bm Z)}{\Delta x_i}
 +\bar\SSS_i(\bm Z).
\label{eq:fv-L}
\end{equation}
The face-based trajectory-derivative approximation supplied to the temporal method is assembled as
\begin{equation}
 \widetilde{\GG}_{h,i}(\bm Z)=-\frac{\widehat\FF_{t,i+1/2}(\bm Z)-\widehat\FF_{t,i-1/2}(\bm Z)}{\Delta x_i}
 +\bar\SSS_{t,i}(\bm Z).
\label{eq:fv-G}
\end{equation}
Identical face values are used by adjacent cells, so both $\LL_h$ and $\widetilde{\GG}_h$ are conservative flux differences.

Applying the first sequential time stage \eqref{eq:stage1} to the semi-discrete pair \eqref{eq:fv-L}--\eqref{eq:fv-G} gives
\begin{align}
\bar\UU_i^\star={}&\bar\UU_i^n
-\frac{\Dt}{4\Delta x_i}\Big[(\widehat\FF_{i+1/2}^n-\widehat\FF_{i-1/2}^n)
 +(\widehat\FF_{i+1/2}^\star-\widehat\FF_{i-1/2}^\star)\Big]\nonumber\\
&+\frac{\Dt}{4}(\bar\SSS_i^n+\bar\SSS_i^\star)
-\frac{\Dt^2}{48\Delta x_i}\Big[(\widehat\FF_{t,i+1/2}^n-\widehat\FF_{t,i-1/2}^n)
 -(\widehat\FF_{t,i+1/2}^\star-\widehat\FF_{t,i-1/2}^\star)\Big]\nonumber\\
&+\frac{\Dt^2}{48}(\bar\SSS_{t,i}^n-\bar\SSS_{t,i}^\star).
\label{eq:fv-stage1}
\end{align}
The second stage is obtained analogously with the weights in \eqref{eq:stage2}. Equations \eqref{eq:fv-stage1} and its second-stage counterpart form global nonlinear systems because the unknown-stage numerical fluxes couple neighboring cells.

\subsection{Two-dimensional finite-volume residual}
On a Cartesian cell $K_{ij}$,
\begin{align}
\LL_{h,ij}(\bm Z)={}&-\frac{\widehat\FF_{i+1/2,j}-\widehat\FF_{i-1/2,j}}{\Delta x_i}
-\frac{\widehat\HH_{i,j+1/2}-\widehat\HH_{i,j-1/2}}{\Delta y_j}
+\bar\SSS_{ij},
\label{eq:2d-L}\\
\widetilde{\GG}_{h,ij}(\bm Z)={}&-\frac{\widehat\FF_{t,i+1/2,j}-\widehat\FF_{t,i-1/2,j}}{\Delta x_i}
-\frac{\widehat\HH_{t,i,j+1/2}-\widehat\HH_{t,i,j-1/2}}{\Delta y_j}
+\bar\SSS_{t,ij}.
\label{eq:2d-G}
\end{align}
High-order face quadrature can be used in each direction. The time derivative is evaluated at the same face quadrature nodes as the flux. This matching is necessary: using a lower-order or differently reconstructed $\widehat\FF_t$ can reduce the temporal order even when the time coefficients themselves are fourth order.

\subsection{ADER/Cauchy--Kowalevski trajectory-derivative provider}
For the semi-discrete ODE $Z_t=\mathcal L_h(Z)$, the exact trajectory derivative required by the fourth-order temporal formula is
\begin{equation}
 \mathcal G_h^{\rm tr}(Z):=D\mathcal L_h(Z)\,\mathcal L_h(Z).
\label{eq:exact-discrete-G}
\end{equation}
The conservative ADER/CK provider described below produces an approximation $\widetilde{\mathcal G}_h(Z)$ from local physical time-derivative information associated with the same reconstruction and numerical flux used in $\mathcal L_h$.  It is not an independent temporal update or a time-interval flux.  Exact algebraic equality $\widetilde{\mathcal G}_h=\mathcal G_h^{\rm tr}$ is proved below for the linear constant-coefficient setting.  For general nonlinear discretizations, fourth-order time integration at fixed $h$ requires $\mathcal G_h^{\rm tr}$ itself; an ADER replacement is covered when it satisfies the trajectory-closure consistency condition \eqref{eq:trajectory-closure-consistency}.

At each face quadrature node, a local ADER or Cauchy--Kowalevski predictor starts from the reconstructed stage state and returns
\[
 (\UU^-,\UU_t^-),\qquad (\UU^+,\UU_t^+).
\]
For a numerical flux $\widehat\FF(\UU^-,\UU^+)$, its physical time derivative is
\begin{equation}
 \widehat\FF_t
 =\widehat\FF_{\UU^-}\UU_t^-+\widehat\FF_{\UU^+}\UU_t^+.
\label{eq:flux-t}
\end{equation}
For gradient-dependent fluxes, the corresponding derivatives of the reconstructed gradients are included in the same chain rule.  The source derivative is $\SSS_t=\SSS_{\UU}\UU_t$ for an autonomous source.  Shared values of $\widehat\FF_t$ are then inserted in \eqref{eq:fv-G} or \eqref{eq:2d-G}, preserving conservation exactly.

The three locations at which time derivatives are needed must be distinguished.  Face derivatives come from the local predictor and the flux chain rule.  Cell-average derivatives are obtained directly from conservation,
\begin{equation}
 \frac{\dd\bar\UU_i}{\dd t}=\LL_{h,i}(\UU_h),
\end{equation}
so no second cell-average predictor is solved.  At interior quadrature points, the reconstructed state is inserted into the governing equation,
\begin{equation}
 \UU_t=-\nabla\!\cdot\FF^c(\UU)+\nabla\!\cdot\FF^v(\UU,\nabla\UU)+\SSS(\UU),
\label{eq:interior-Ut}
\end{equation}
or read from the same element-local ADER polynomial.  The flux and flux-time derivative must use the same face nodes and reconstruction order; otherwise the derivative closure can become the accuracy bottleneck.

\begin{algorithm}[tbp]
\caption{ADER/CK provider for $\widetilde{\mathcal G}_h$ at one implicit stage state $Z$}
\label{alg:ader-provider}
\begin{algorithmic}[1]
\State Reconstruct the stage state at every face quadrature node.
\State Use a local ADER/CK predictor to obtain $Z_t^-$ and $Z_t^+$ from the PDE.
\State Evaluate $\widehat F(Z^-,Z^+)$ and $\widehat F_t=\widehat F_{Z^-}Z_t^-+\widehat F_{Z^+}Z_t^+$.
\State Assemble $\mathcal L_h(Z)$ from shared face fluxes and $\widetilde{\mathcal G}_h(Z)$ from shared face flux-time derivatives.
\State Return both residuals to the current implicit stage equation; do not advance time inside the predictor.
\end{algorithmic}
\end{algorithm}

\begin{remark}[Closed semi-discrete formulation]
The implemented spatial module is the map
\[
 Z\longmapsto\bigl(\mathcal L_h(Z),\widetilde{\mathcal G}_h(Z)\bigr).
\]
At the known state and at each sequential implicit stage state, this pair is inserted directly into \eqref{eq:stage1} and \eqref{eq:stage2}; no additional temporal construction is used.  When $\widetilde{\mathcal G}_h=\mathcal G_h^{\rm tr}$, the temporal order theorem applies to the semi-discrete ODE exactly.  In the linear finite-volume setting this equality is exact.  In a nonlinear ADER implementation, the closure error must satisfy \eqref{eq:trajectory-closure-consistency} so that it remains part of the spatial error rather than reducing the time order.  In subsequent stage and solver formulas, $\mathcal G_h$ denotes the operator supplied to the temporal method: it is $\mathcal G_h^{\rm tr}$ for an exact discrete trajectory derivative and $\widetilde{\mathcal G}_h$ for the ADER approximation.
\end{remark}

For the linear verification problem $\UU_t+A\UU_x=S\UU$, let $\bm V_h=\LL_h(\UU_h)$.  Linearity of the reconstruction and numerical flux gives
\begin{equation}
 \widehat\FF_t=\widehat\FF(\bm V_L,\bm V_R),\qquad \SSS_t=S\bm V,
\label{eq:linear-ader-flux}
\end{equation}
which is the constant-coefficient ADER/CK specialization.  It yields the exact identity
\begin{equation}
 \widetilde{\GG}_h(\UU_h)=\GG_h^{\rm tr}(\UU_h)=\LL_h(\bm V_h)=\LL_h^2(\UU_h).
\label{eq:A2closure}
\end{equation}

\begin{proposition}[Linear face-derivative closure]
Consider $\UU_t+A\UU_x=S\UU$ with constant matrices, a linear reconstruction, and a linear numerical flux.  If $\widehat\FF_t$ and $\SSS_t$ are defined by \eqref{eq:linear-ader-flux}, then the conservative assembly \eqref{eq:fv-G} satisfies \eqref{eq:A2closure} exactly.
\end{proposition}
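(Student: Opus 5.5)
Since the reconstruction, the numerical flux and the source are all linear, $\LL_h$ is a linear map of the cell averages.  We write it as a matrix $\LL_h(\UU_h)=M_h\UU_h$.  Then $D\LL_h(Z)=M_h$ for every $Z$, and by \eqref{eq:exact-discrete-G}
\[
 \GG_h^{\rm tr}(\UU_h)=M_h M_h\UU_h=\LL_h(\LL_h(\UU_h))=\LL_h(\bm V_h)=\LL_h^2(\UU_h).
\]
This gives the last two equalities in \eqref{eq:A2closure}.  The remaining task is to prove $\widetilde\GG_h(\UU_h)=\LL_h(\bm V_h)$, where $\widetilde\GG_h$ is assembled independently from face data.

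We first make the linear structure explicit.  Write the face traces as linear functions of the cell averages, $\UU^\mp_{i+1/2}=\mathcal R^\mp_{i+1/2}\UU_h$.  Write the numerical flux as $\widehat\FF(\UU^-,\UU^+)=B^-\UU^-+B^+\UU^+$ with constant matrices $B^\pm$; for example $B^\pm=\tfrac12(A\pm|A|)$ for upwinding, or $A/2\mp\alpha I/2$ for a Rusanov flux with a frozen constant $\alpha$.  The flux chain rule \eqref{eq:flux-t} then reads $\widehat\FF_t=B^-\UU_t^-+B^+\UU_t^+$.

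The key step is to identify the predicted face time derivatives with the reconstructed traces of $\bm V_h$.  In the constant-coefficient ADER/CK specialization, the time derivative at a face node is taken as the reconstruction of the discrete time derivative of the cell averages, i.e.\ $\UU_t^\mp=\mathcal R^\mp\bm V_h=\bm V_{L/R}$.  This is exactly the content of \eqref{eq:linear-ader-flux}: it holds because the reconstruction operator commutes with $d/dt$ when applied to $\UU_h(t)$ solving $\dot\UU_h=\LL_h(\UU_h)$.  With this identification,
\[
 \widehat\FF_{t,i+1/2}=B^-\mathcal R^-_{i+1/2}\bm V_h+B^+\mathcal R^+_{i+1/2}\bm V_h=\widehat\FF_{i+1/2}(\bm V_h).
\]
Likewise $\bar\SSS_{t,i}=S\bar{\bm V}_i=\bar\SSS_i(\bm V_h)$.

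Substituting these into \eqref{eq:fv-G} and comparing term by term with \eqref{eq:fv-L} evaluated at $\bm Z=\bm V_h$ gives $\widetilde\GG_{h,i}(\UU_h)=\LL_{h,i}(\bm V_h)$ for every cell.  Together with the first paragraph this proves \eqref{eq:A2closure}.  Conservation holds automatically because the shared face values are used.

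The main obstacle is the second step.  We must state precisely that the linear predictor's face derivative is the \emph{same} linear reconstruction applied to $\bm V_h$.  It must not be a pointwise Cauchy--Kowalevski value $-A\partial_x\UU+S\UU$ computed from the reconstruction polynomial, which would generally differ at finite $h$.  We will take \eqref{eq:linear-ader-flux} as the defining specialization and note that, once it is adopted, the argument is pure linearity.  Everything else is the substitution above, and the identity holds exactly in exact arithmetic, with no truncation terms.
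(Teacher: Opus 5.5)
Your proposal is correct and is essentially the paper's argument: the paper writes $\LL_h=P_h\mathcal F_hR_h+S_h$ as a composition of linear maps, differentiates along the trajectory to get $P_h\mathcal F_hR_h\bm V_h+S_h\bm V_h=\LL_h(\bm V_h)=\LL_h^2(\UU_h)$, and asserts in one line that this is the face assembly \eqref{eq:fv-G}, which is exactly what your $B^\pm$ substitution of \eqref{eq:linear-ader-flux} spells out. One harmless slip: with $\UU^-$ the left trace, upwinding requires $B^\pm=\tfrac12(A\mp|A|)$ (consistent with your Rusanov choice), not $\tfrac12(A\pm|A|)$, though your argument only uses that $B^\pm$ are constant matrices.
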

\begin{proof}
Write the residual as $\LL_h=P_h\mathcal F_hR_h+S_h$, where $R_h$ reconstructs cell averages to face states, $\mathcal F_h$ applies the linear numerical flux at each face, and $P_h$ takes conservative face differences.  All three maps are linear.  Along the semi-discrete trajectory $\bm V_h=\LL_h(\UU_h)$,
\[
 \frac{\dd}{\dd t}\LL_h(\UU_h)
 =P_h\mathcal F_hR_h\bm V_h+S_h\bm V_h
 =\LL_h(\bm V_h)=\LL_h^2(\UU_h).
\]
The left-hand side is precisely the assembly of the face flux derivatives and source derivatives in \eqref{eq:fv-G}.
\end{proof}

The reference implementation keeps the two matrices separate: $L_h$ is assembled from a fifth-order optimal linear finite-volume reconstruction and a Rusanov flux, while $\widetilde G_h$ is assembled independently by repeating the same reconstruction and flux operation on $V_h=L_hU_h$.  The stage equations use the supplied $\widetilde G_h$ matrix directly.  Forming $L_h^2$ is used only to audit the exact linear identity, not as the derivative provider in the finite-volume coupling experiment.

\subsection{Required spatial accuracy and a global perturbation estimate}
Let $\Pi_h$ be the projection to the spatial discrete space.  For a smooth exact solution, assume residual and exact-trajectory consistency,
\begin{align}
\|\LL_h(\Pi_h\UU)-\Pi_h\LL(\UU)\|&\le C h^p,\\
\|\GG_h^{\rm tr}(\Pi_h\UU)-\Pi_h(D\LL(\UU)\LL(\UU))\|&\le C h^p,
\label{eq:spatial-consistency}
\end{align}
and assume that the ADER provider, when used, satisfies
\begin{equation}
 \|\widetilde{\GG}_h(\Pi_h\UU)-\GG_h^{\rm tr}(\Pi_h\UU)\|\le C h^p.
\label{eq:trajectory-closure-consistency}
\end{equation}
Let $\Phi_{h,\Delta t}$ denote one exact numerical step formed with $\GG_h^{\rm tr}$, and let $\widetilde\Phi_{h,\Delta t}$ denote the step actually returned after replacing it by $\widetilde{\GG}_h$, approximating residual data, and terminating the nonlinear iterations.

\begin{theorem}[Global space--time, trajectory-closure, and solver error]
Assume that the exact stage maps are locally unique and that, in a neighborhood of the projected exact solution,
\[
 \|\Phi_{h,\Delta t}(V)-\Phi_{h,\Delta t}(W)\|\le(1+L\Delta t)\|V-W\|.
\]
Suppose the one-step perturbation satisfies
\[
 \|\widetilde\Phi_{h,\Delta t}(V)-\Phi_{h,\Delta t}(V)\|
 \le C\bigl(\Delta t\,\eta_L+\Delta t^2\eta_G+\eta_{\rm solve}\bigr).
\]
Then, for $t_n\le T$ and sufficiently small $h$ and $\Delta t$,
\begin{equation}
 \|\UU(t_n)-\UU_h^n\|
 \le C_T\left(h^p+\Delta t^4+\eta_L+\Delta t\eta_G+\frac{\eta_{\rm solve}}{\Delta t}\right).
\label{eq:total-error}
\end{equation}
Here $\eta_G$ includes the difference between the supplied derivative and $\GG_h^{\rm tr}$.  Exact fourth-order temporal convergence to the semi-discrete ODE at fixed $h$ follows when $\eta_L=O(\Delta t^4)$, $\eta_G=O(\Delta t^3)$, and $\eta_{\rm solve}=O(\Delta t^5)$.  For a smooth PDE calculation, the ADER closure condition \eqref{eq:trajectory-closure-consistency} gives $\eta_G=O(h^p)$, which is absorbed into the combined $O(h^p+\Delta t^4)$ space--time error.  This is a fixed-parameter estimate; no constant is asserted to be uniform as $\delta\to0$.
\end{theorem}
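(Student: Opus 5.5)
The plan is a standard Lady Windermere's fan / discrete Gronwall argument, with the error split into a projection part, a temporal truncation part, and a perturbation part. Define the projected exact values $\bm W^n=\Pi_h\UU(t_n)$ and write
\[
 \UU(t_n)-\UU_h^n=(\UU(t_n)-\bm W^n)+(\bm W^n-\UU_h^n).
\]
The first term is $O(h^p)$ by the approximation property of $\Pi_h$, which is implicit in the consistency assumptions. For the second term, set $\bm e^n=\bm W^n-\UU_h^n$ and decompose one step as
\[
 \bm e^{n+1}=\bigl[\bm W^{n+1}-\Phi_{h,\Dt}(\bm W^n)\bigr]
 +\bigl[\Phi_{h,\Dt}(\bm W^n)-\Phi_{h,\Dt}(\UU_h^n)\bigr]
 +\bigl[\Phi_{h,\Dt}(\UU_h^n)-\widetilde\Phi_{h,\Dt}(\UU_h^n)\bigr].
\]
The middle bracket is bounded by $(1+L\Dt)\|\bm e^n\|$ by hypothesis. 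The last bracket is bounded by $C(\Dt\eta_L+\Dt^2\eta_G+\eta_{\rm solve})$.

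The main work is the first bracket, the local defect of the exact-derivative scheme along projected exact data. It should be $O(\Dt^5+\Dt h^p)$. I would obtain this by redoing the residual argument of the fourth-order consistency theorem with $\bm W(t)=\Pi_h\UU(t)$ in place of the semi-discrete trajectory. Inserting $\bm W(t_n)$, $\bm W(t_n+\Dt/2)$ and $\bm W(t_{n+1})$ into \eqref{eq:stage1}--\eqref{eq:stage2} then produces two contributions:

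\begin{itemize}
 \item \textbf{Temporal part.} Replacing $\LL_h(\bm W)$ and $\GG_h^{\rm tr}(\bm W)$ by $\Pi_h\LL(\UU)$ and $\Pi_h(D\LL\,\LL)(\UU)$ gives the exact Taylor cancellation $\alpha_m$ of that theorem, since $\Pi_h$ commutes with time differentiation. This leaves $O(\Dt^5)$, with constants controlled by time derivatives of $\UU$.
 \item \textbf{Spatial part.} The replacement itself costs, by \eqref{eq:spatial-consistency}, $\Dt\,O(h^p)+\Dt^2O(h^p)$.
\end{itemize}

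The residual is then converted into a stage/endpoint error using the uniformly bounded inverses of $I-\tfrac{\Dt}{4}D\LL_h+\tfrac{\Dt^2}{48}D\GG_h^{\rm tr}$ and $I-a_2\Dt D\LL_h+\tfrac32C\Dt^2D\GG_h^{\rm tr}$, exactly as in that proof. The midpoint perturbation is propagated with the local Lipschitz bounds.

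\textbf{Main obstacle.} These inverse and Lipschitz bounds must hold uniformly in $h$. The norms of $D\LL_h$ and $D\GG_h^{\rm tr}$ grow like $h^{-1}$ and $h^{-2}$, so the "small $\Dt$" condition silently becomes a coupling $\Dt\lesssim h$. I would state this honestly as part of the locality and uniqueness assumption, since the theorem is explicitly fixed-parameter. The same remark covers the neighbourhood assumption: one needs $\UU_h^n$ to stay near $\bm W^n$, which is closed by a bootstrap once the final bound is small.

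Combining the pieces gives
\[
 \|\bm e^{n+1}\|\le(1+L\Dt)\|\bm e^n\|+C\bigl(\Dt^5+\Dt h^p+\Dt\eta_L+\Dt^2\eta_G+\eta_{\rm solve}\bigr).
\]
The discrete Gronwall lemma, with $n\Dt\le T$ and $\bm e^0=0$ or $O(h^p)$, yields
\[
 \|\bm e^n\|\le e^{LT}\frac{C}{\Dt}\bigl(\Dt^5+\Dt h^p+\Dt\eta_L+\Dt^2\eta_G+\eta_{\rm solve}\bigr).
\]
This is \eqref{eq:total-error}.

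The corollary statements follow by substitution:
\begin{itemize}
 \item With $\eta_L=O(\Dt^4)$, $\eta_G=O(\Dt^3)$ and $\eta_{\rm solve}=O(\Dt^5)$, every perturbation term is $O(\Dt^4)$. Dropping $h^p$ at fixed $h$ gives the fixed-$h$ fourth-order convergence to the semi-discrete ODE; there one simply takes $\bm W$ to be the semi-discrete trajectory.
 \item In the PDE case, $\eta_G$ is bounded by \eqref{eq:trajectory-closure-consistency}, evaluated near $\Pi_h\UU$ using a Lipschitz bound on $\widetilde\GG_h-\GG_h^{\rm tr}$. This gives $\Dt\,\eta_G=O(\Dt h^p)$, which is absorbed into $h^p$.
\end{itemize}
No constant depends on $\delta$ in a controlled way, which matches the closing disclaimer.
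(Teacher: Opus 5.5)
Your proposal is correct and follows the paper's own argument: it bounds the projected local defect by $O(\Delta t^5+\Delta t\,h^p)$ using the consistency theorem and the spatial consistency bounds, adds and subtracts the exact step at $\UU_h^n$ with the Lipschitz and perturbation hypotheses, and closes with discrete Gronwall over $T/\Delta t$ steps. Your remarks on the $h$-uniformity of the stage inverses and the neighbourhood bootstrap spell out what the paper's one-line defect claim leaves implicit.
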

\begin{proof}
The nonlinear consistency theorem and \eqref{eq:spatial-consistency} give the projected local defect
\[
 \|\Pi_h\UU(t_{n+1})-\Phi_{h,\Delta t}(\Pi_h\UU(t_n))\|
 \le C(\Delta t^5+\Delta t h^p).
\]
Add and subtract the exact numerical step at $\UU_h^n$, use the Lipschitz bound and the assumed one-step implementation perturbation, and obtain
\[
 e_{n+1}\le(1+L\Delta t)e_n+C(\Delta t^5+\Delta t h^p+\Delta t\eta_L+\Delta t^2\eta_G+\eta_{\rm solve}).
\]
Discrete Gronwall over at most $T/\Delta t$ steps proves \eqref{eq:total-error}.
\end{proof}

The theorem explains the requirement placed on the spatial module in the temporal-order tests: its consistency error must remain below the $O(\Delta t^4)$ contribution.  Compact Hermite or WENO reconstructions may reuse interface information from the local predictor.  At physical boundaries we use stage-consistent ghost or face data following \cite{DuLiBoundary2018}; no boundary construction is developed here.

\section{Rigorous full-step asymptotic-preserving analysis}
\subsection{Definition and compatible space operators}
AP and L-stability concern different limits. L-stability holds for a fixed semi-discrete eigenvalue as $z=\lambda\Dt$ becomes large. AP concerns the singular physical parameter $\delta\to0$ at fixed $h$ and $\Dt$.

Let $X_h$ and $Q_h$ be finite-dimensional Hilbert spaces. Let $G_h:X_h\to Q_h$ and $D_h:Q_h\to X_h$ satisfy
\begin{equation}
 D_h=-G_h^*.
\label{eq:compatible}
\end{equation}
Define
\begin{equation}
 A_{h,\delta}=
 \begin{bmatrix}
 0&-D_h\\
 -(a/\delta)G_h&-(1/\delta)I
 \end{bmatrix},\qquad
 L_{D,h}=-aG_h^*G_h,
\label{eq:APoperators}
\end{equation}
and the projection and equilibrium embedding
\begin{equation}
 P_h(u,q)^T=u,\qquad E_hu=(u,-aG_hu)^T.
\label{eq:PE}
\end{equation}
The limiting diffusion operator is negative semidefinite because
\begin{equation}
 \langle u,L_{D,h}u\rangle=-a\|G_hu\|^2\le0.
\end{equation}

\begin{definition}
The time-discrete one-step map applied to the semi-discrete system is \emph{full-step AP} if, for fixed $h$ and $\Dt$,
\begin{equation}
 R(\Dt A_{h,\delta})\longrightarrow E_hR(\Dt L_{D,h})P_h
\end{equation}
as $\delta\to0$ in operator norm.
\end{definition}

\subsection{Orthogonal slow--fast decomposition}
\begin{lemma}[Block decomposition]\label{lem:block-decomposition}
Let $G_h=U\Sigma V^*$ be a singular-value decomposition with positive singular values $\sigma_1,\dots,\sigma_r$. Under the unitary basis transformation $\operatorname{diag}(V,U)$, $A_{h,\delta}$ is the orthogonal direct sum of zero blocks on $\ker G_h$, scalar blocks $-1/\delta$ on $\ker G_h^*$, and
\begin{equation}
 B_{j,\delta}=
 \begin{bmatrix}
 0&\sigma_j\\
 -a\sigma_j/\delta&-1/\delta
 \end{bmatrix},\qquad j=1,\dots,r.
\label{eq:Bblock}
\end{equation}
\end{lemma}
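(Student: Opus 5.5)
The plan is to conjugate $A_{h,\delta}$ by the block-diagonal unitary $W=\operatorname{diag}(V,U)$ and read off the blocks. Write $G_h=U\Sigma V^*$, where $V$ is unitary on $X_h$, $U$ is unitary on $Q_h$, and $\Sigma:\mathbb{C}^{\dim X_h}\to\mathbb{C}^{\dim Q_h}$ is the rectangular matrix with diagonal entries $\sigma_1,\dots,\sigma_r$ and zeros elsewhere. Since $D_h=-G_h^*=-V\Sigma^*U^*$ by \eqref{eq:compatible}, we have
\[
 W^*A_{h,\delta}W=
 \begin{bmatrix}
 0&V^*(-D_h)U\\
 -(a/\delta)U^*G_hV&-(1/\delta)U^*U
 \end{bmatrix}
 =\begin{bmatrix}
 0&\Sigma^*\\
 -(a/\delta)\Sigma&-(1/\delta)I
 \end{bmatrix}.
\]
Here the identity $U^*U=I$ makes the relaxation block remain scalar. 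This step uses only unitarity and compatibility, so it is routine.

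Next I will reorder the basis by a permutation, which is also unitary. Let $v_j$ and $u_j$ be the columns of $V$ and $U$. For $j\le r$, the pair $(v_j,u_j)$ spans a two-dimensional invariant subspace. On it the action is read off from the entries $\Sigma^*_{jj}=\sigma_j$ and $\Sigma_{jj}=\sigma_j$: the $u$-component maps $v_j\mapsto -(a\sigma_j/\delta)u_j$, the flux component maps $u_j\mapsto \sigma_j v_j-(1/\delta)u_j$, and this gives exactly $B_{j,\delta}$ in \eqref{eq:Bblock}.

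For $j>r$, the vectors $v_j$ span $\ker G_h$, and the first block column gives $A v_j=0$, since $\Sigma v_j=0$. Likewise the $u_j$ with $j>r$ span $\ker G_h^*=(\operatorname{ran}G_h)^\perp$, and there $A u_j=-(1/\delta)u_j$, since $\Sigma^*u_j=0$. These subspaces are mutually orthogonal and span $X_h\oplus Q_h$, so the sum is an orthogonal direct sum.

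The only point needing care is bookkeeping rather than analysis. I have to check that the columns $v_1,\dots,v_r$ span $(\ker G_h)^\perp$ and $u_1,\dots,u_r$ span $\operatorname{ran}G_h$. This is the standard SVD fact, and it is what makes the three families of subspaces exhaust the space and reduce $A_{h,\delta}$ simultaneously.

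I will also note the consequence used later: any rational function of $\Dt A_{h,\delta}$, in particular $R(\Dt A_{h,\delta})$, is block-diagonalized by the same unitary change of basis. Its operator norm is therefore the maximum over the blocks.
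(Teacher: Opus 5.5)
Your proof is correct and follows essentially the paper's route: conjugate by $\operatorname{diag}(V,U)$ and use $-D_h=G_h^*=V\Sigma^*U^*$ together with $G_hV=U\Sigma$, so that only the pairs $(\widehat u_j,\widehat q_j)$ couple and form $B_{j,\delta}$, while the components in $\ker G_h$ and $\ker G_h^*$ give the zero and $-1/\delta$ blocks. Your explicit permutation step and the final remark on block-diagonal rational functions simply spell out what the paper leaves implicit; one small slip is that for $j>r$ you mean $\Sigma e_j=0$, i.e.\ $G_hv_j=0$, rather than $\Sigma v_j=0$.
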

\begin{proof}
Write $u=V\widehat u+u_0$ with $u_0\in\ker G_h$ and $q=U\widehat q+q_0$ with $q_0\in\ker G_h^*$. The identities $G_hV=U\Sigma$ and $-D_h=G_h^*=V\Sigma^*U^*$ couple only the coefficients $(\widehat u_j,\widehat q_j)$. The kernel components satisfy $u_{0,t}=0$ and $q_{0,t}=-q_0/\delta$.
\end{proof}

\begin{lemma}[Eigenvalues and projectors]
For sufficiently small $\delta$, the block \eqref{eq:Bblock} has distinct negative eigenvalues
\begin{equation}
 \lambda_{s,j}=\frac{-1+\sqrt{1-4a\delta\sigma_j^2}}{2\delta},\qquad
 \lambda_{f,j}=\frac{-1-\sqrt{1-4a\delta\sigma_j^2}}{2\delta}.
\end{equation}
They satisfy
\begin{equation}
 \lambda_{s,j}=-a\sigma_j^2+O(\delta),\qquad
 \lambda_{f,j}=-\delta^{-1}+a\sigma_j^2+O(\delta).
\label{eq:eigexpand}
\end{equation}
The slow and fast spectral projectors are uniformly bounded for sufficiently small $\delta$, and
\begin{equation}
 \Pi_{s,j}=\frac{B_{j,\delta}-\lambda_{f,j}I}{\lambda_{s,j}-\lambda_{f,j}}
 =\begin{bmatrix}1&0\\-a\sigma_j&0\end{bmatrix}+O(\delta).
\label{eq:projector}
\end{equation}
\end{lemma}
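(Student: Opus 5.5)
The plan is to compute directly with the $2\times2$ block $B_{j,\delta}$ from Lemma~\ref{lem:block-decomposition}. Its trace is $-1/\delta$ and its determinant is $a\sigma_j^2/\delta$. The characteristic polynomial is therefore $\lambda^2+\lambda/\delta+a\sigma_j^2/\delta=0$, equivalently $\delta\lambda^2+\lambda+a\sigma_j^2=0$. The quadratic formula gives exactly the stated $\lambda_{s,j},\lambda_{f,j}$. For $\delta<1/(4a\sigma_j^2)$ the discriminant $1-4a\delta\sigma_j^2$ lies in $(0,1)$, so both roots are real and distinct. Both are negative because their sum $-1/\delta$ is negative and their product $a\sigma_j^2/\delta$ is positive. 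Since there are finitely many singular values, one threshold $\delta_0=\min_j 1/(8a\sigma_j^2)$ works for all $j$ simultaneously.

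For the expansions \eqref{eq:eigexpand}, I would avoid the $\delta^{-1}$ cancellation in the slow root by rationalizing:
\[
\lambda_{s,j}=\frac{-2a\sigma_j^2}{1+\sqrt{1-4a\delta\sigma_j^2}}.
\]
Expanding $\sqrt{1-x}=1-x/2+O(x^2)$ then gives $\lambda_{s,j}=-a\sigma_j^2+O(\delta)$, in fact $-a\sigma_j^2-a^2\sigma_j^4\delta+O(\delta^2)$. The fast root follows from the trace identity $\lambda_{f,j}=-1/\delta-\lambda_{s,j}=-\delta^{-1}+a\sigma_j^2+O(\delta)$.

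For the projectors, since the two eigenvalues are distinct, $B_{j,\delta}$ is diagonalizable. The Cayley--Hamilton identity $(B-\lambda_s I)(B-\lambda_f I)=0$ shows that $\Pi_{s,j}=(B-\lambda_f I)/(\lambda_s-\lambda_f)$ is idempotent, acts as the identity on the $\lambda_s$-eigenvector and annihilates the $\lambda_f$-eigenvector. The spectral gap is $\lambda_s-\lambda_f=\sqrt{1-4a\delta\sigma_j^2}/\delta$. Inserting this, I would compute entrywise:
\begin{itemize}[leftmargin=2em]
\item the $(1,1)$ entry is $-\lambda_f\delta/\sqrt{\cdot}=(1-a\sigma_j^2\delta+O(\delta^2))(1+O(\delta))=1+O(\delta)$;
\item the $(1,2)$ entry is $\sigma_j\delta/\sqrt{\cdot}=O(\delta)$;
\item the $(2,1)$ entry is $(-a\sigma_j/\delta)\cdot\delta/\sqrt{\cdot}=-a\sigma_j+O(\delta)$;
\item the $(2,2)$ entry is $(-1/\delta-\lambda_f)\delta/\sqrt{\cdot}=\lambda_s\delta/\sqrt{\cdot}=O(\delta)$.
\end{itemize}
This is exactly \eqref{eq:projector}. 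Uniform boundedness of $\Pi_{s,j}$, and of $\Pi_{f,j}=I-\Pi_{s,j}$, follows because each entry is a continuous function of $\delta$ on $[0,\delta_0]$ once written in these rationalized forms.

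The only delicate point is the $(2,2)$ and $(2,1)$ entries. There, terms of size $\delta^{-1}$ appear in the numerator and must be seen to cancel before dividing by the $O(\delta^{-1})$ gap. I would handle this through the identity $-1/\delta-\lambda_f=\lambda_s$ rather than by expanding the separate terms, which keeps every entry manifestly $O(1)$ with an explicit $O(\delta)$ correction.
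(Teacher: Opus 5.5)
Your proposal is correct and follows the same route as the paper's proof: the characteristic polynomial $\lambda^2+\delta^{-1}\lambda+a\sigma_j^2/\delta$, Taylor expansion of the square root, the gap $\lambda_{s,j}-\lambda_{f,j}=\delta^{-1}\sqrt{1-4a\delta\sigma_j^2}\ge c/\delta$ over the finite set of singular values, substitution into the explicit projector formula, and $\Pi_{f,j}=I-\Pi_{s,j}$ for the fast projector. You add details the paper leaves implicit: Vieta for negativity, the rationalized slow root, the idempotence check, and the trace identity $-1/\delta-\lambda_{f,j}=\lambda_{s,j}$ for the $(2,2)$ entry, which is the only entry with a genuine $\delta^{-1}$ cancellation, since the $(2,1)$ entry is just $-a\sigma_j/\sqrt{1-4a\delta\sigma_j^2}$.
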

\begin{proof}
The characteristic polynomial is $\lambda^2+\delta^{-1}\lambda+a\sigma_j^2/\delta$. Taylor expansion of the square root gives \eqref{eq:eigexpand}. Moreover,
\begin{equation}
 \lambda_{s,j}-\lambda_{f,j}=\delta^{-1}\sqrt{1-4a\delta\sigma_j^2},
\end{equation}
which is bounded below by $c/\delta$ for the finite set of singular values and sufficiently small $\delta$. Substitution into the explicit projector formula proves \eqref{eq:projector} and uniform boundedness. Since $\Pi_{f,j}=I-\Pi_{s,j}$, the fast projector is also uniformly bounded.
\end{proof}

\subsection{Stability estimates and the one-step theorem}
\begin{lemma}[Bounds for the rational time map]\label{lem:rational-bounds}
For every L-stable parameter in \eqref{eq:Cinterval}, there exist $M,z_0>0$ such that
\begin{equation}
 |R(z)|\le \frac{M}{|z|},\qquad z\le-z_0.
\label{eq:Rlarge}
\end{equation}
For $C=C_q$, the stronger estimate $|R(z)|\le M/|z|^2$ holds. On every compact subset of the negative real axis, $R$ is Lipschitz continuous.
\end{lemma}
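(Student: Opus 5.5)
The plan is to work directly from the reduced form \eqref{eq:Rreduced}, $R=N_C/D_C$, where $N_C$ has degree at most three and $D_C$ has degree exactly four with leading coefficient $9C>0$. By the stability theorem, $D_C$ has no zeros in the closed left half-plane, so in particular none on $(-\infty,0]$. Every claim in the lemma then follows from elementary facts about rational functions that are pole-free on the negative axis, combined with the large-$|z|$ expansions of Proposition~\ref{prop:accuracy-damping}.

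\textbf{Large-$|z|$ bound.} From \eqref{eq:large-expansion}, $zR(z)\to -\tfrac{61}{3}+\tfrac{5}{9C}$ as $z\to-\infty$. Hence $zR(z)$ is continuous on $(-\infty,-1]$ and has a finite limit at $-\infty$, so it is bounded there. Choose $z_0=1$ and set $M=\sup_{z\le-1}|zR(z)|$; this gives \eqref{eq:Rlarge}.

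An alternative that avoids relying on the limit is to use the degree count. Pick $z_0$ so large that $|D_C(z)|\ge \tfrac{9C}{2}|z|^4$ and $|N_C(z)|\le K|z|^3$ for $|z|\ge z_0$. This yields $|R(z)|\le (2K/9C)|z|^{-1}$ directly.

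\textbf{The case $C=C_q$.} The cubic coefficient $183C-5$ of the numerator vanishes, so $\deg N_C\le2$. By \eqref{eq:quadratic-decay}, $z^2R(z)\to 314/5$, and the same argument applied to $z^2R(z)$ gives $|R(z)|\le M/|z|^2$ for $z\le-z_0$. If a single constant is wanted on all of $z\le-z_0$ for both statements, take the maximum of the two suprema.

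\textbf{Lipschitz continuity on compact sets.} On a compact set $K\subset(-\infty,0]$, $D_C$ is continuous and nonvanishing, so $|D_C|\ge c_K>0$ on $K$. Then
\[
R'=\frac{N_C'D_C-N_CD_C'}{D_C^2}
\]
is continuous and hence bounded on $K$. The mean-value theorem on intervals then gives the Lipschitz bound; for a non-interval compact set, apply it on the convex hull, which is still a compact subset of the negative axis.

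\textbf{Main obstacle.} The only point needing care is certifying that $D_C$ does not vanish on the negative real axis for every $C\in[C_-,C_+]$. This is supplied by the root-location argument in the stability theorem: the factor $z^2-12z+48$ has roots $6\pm2\sqrt3\,i$, and the roots of $9Cz^2-(39C+1)z+6$ have positive sum and product. Everything else is routine bounding of continuous functions with finite limits.
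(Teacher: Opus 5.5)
Your proposal is correct and follows the same route as the paper: the paper also reads the $M/|z|$ bound, and the $M/|z|^2$ bound for $C_q$, directly off the stiff expansions \eqref{eq:large-expansion} and \eqref{eq:quadratic-decay}, and gets the compact Lipschitz estimate from differentiability of $R$ and the absence of poles on the negative real axis. What you add is the detail the paper's two-line proof leaves implicit, namely the continuity/degree-count step and the root-location argument showing that $D_C$ does not vanish on $(-\infty,0]$.
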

\begin{proof}
The estimates follow from \eqref{eq:large-expansion} and \eqref{eq:quadratic-decay}. Rational differentiability and absence of poles on the negative real axis give the compact Lipschitz estimate.
\end{proof}

\begin{lemma}[Uniform boundedness of the full one-step map]\label{lem:uniform-step}
For fixed $h$ and $\Dt$, there are $\delta_0>0$ and $M_{h,\Dt}$ such that
\begin{equation}
 \sup_{0<\delta\le\delta_0}\|R(\Dt A_{h,\delta})\|\le M_{h,\Dt}.
\label{eq:uniform-step}
\end{equation}
\end{lemma}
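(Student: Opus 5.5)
The plan is to use the orthogonal decomposition of Lemma~\ref{lem:block-decomposition}. The unitary transformation $\operatorname{diag}(V,U)$ commutes with applying the rational function $R$, so $R(\Dt A_{h,\delta})$ becomes an orthogonal direct sum of three kinds of blocks: $R(0)=1$ on $\ker G_h$, $R(-\Dt/\delta)$ on $\ker G_h^*$, and $R(\Dt B_{j,\delta})$ for $j=1,\dots,r$. First I will note that $R$ is analytic on the spectrum, so each block is well defined. All eigenvalues are real and nonpositive for small $\delta$, and the poles of $R$ lie in the open right half-plane by the stability theorem. The operator norm of a unitary-equivalent direct sum is the maximum of the block norms, so it suffices to bound each block uniformly in $\delta$.

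The scalar blocks are immediate. The first equals $1$. The second satisfies $|R(-\Dt/\delta)|\le1$ by A-stability, and in fact tends to $0$.

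For the $2\times2$ blocks, the eigenvalues are distinct for $\delta\le\delta_0$ with $4a\delta_0\sigma_{\max}^2<1$. I will use the spectral decomposition
\[
 R(\Dt B_{j,\delta})=R(\Dt\lambda_{s,j})\,\Pi_{s,j}+R(\Dt\lambda_{f,j})\,\Pi_{f,j}.
\]
Both $\Dt\lambda_{s,j}$ and $\Dt\lambda_{f,j}$ are negative reals, so A-stability gives $|R(\Dt\lambda_{s,j})|,|R(\Dt\lambda_{f,j})|\le1$. The projector lemma gives $\|\Pi_{s,j}\|,\|\Pi_{f,j}\|\le K$ uniformly for $0<\delta\le\delta_0$, possibly after shrinking $\delta_0$. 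Hence $\|R(\Dt B_{j,\delta})\|\le 2K$. Taking the maximum over the finitely many $j$ and the two scalar blocks yields $M_{h,\Dt}=\max(1,2K)$.

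The only delicate point is the uniform bound on the spectral projectors, which is where non-normality of $B_{j,\delta}$ enters. The projectors are not orthogonal, and their norms are controlled only because the eigenvalue gap $\lambda_{s,j}-\lambda_{f,j}\ge c/\delta$ grows at the same rate as the entries of $B_{j,\delta}-\lambda_{f,j}I$. The previous lemma already establishes this, via the expansion \eqref{eq:projector}. I will cite that lemma and check that $\delta_0$ can be chosen uniformly over the finite set $\{\sigma_j\}$.
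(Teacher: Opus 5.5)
Your proposal is correct and follows essentially the same route as the paper's proof. Both use the orthogonal block decomposition, A-stability on the scalar $\ker G_h^*$ block, and the two-term spectral representation $R(\Dt B_{j,\delta})=R(\Dt\lambda_{s,j})\Pi_{s,j}+R(\Dt\lambda_{f,j})\Pi_{f,j}$ with uniformly bounded projectors, finished by a finite maximum over blocks. The only cosmetic difference is that you bound the slow factor by A-stability rather than by compactness of the slow arguments, which gives you the explicit constant $\max(1,2K)$.
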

\begin{proof}
Use the orthogonal direct-sum representation. On $\ker G_h$, the map is the identity. On $\ker G_h^*$, it is multiplication by $R(-\Dt/\delta)$, which is bounded by A-stability. On each coupled block,
\begin{equation}
 R(\Dt B_{j,\delta})=R(\Dt\lambda_{s,j})\Pi_{s,j}+R(\Dt\lambda_{f,j})\Pi_{f,j}.
\end{equation}
The projectors are uniformly bounded, the slow argument stays in a fixed compact subset of the negative axis, and the fast factor is bounded by A-stability. The finite maximum over all blocks proves \eqref{eq:uniform-step}.
\end{proof}

\begin{theorem}[Full-step AP operator estimate]\label{thm:fullstep-ap}
Assume \eqref{eq:compatible}, exact closure $\GG_h(\UU)=A_{h,\delta}^2\UU$, exact solution of the two implicit stages, and an L-stable parameter $C$.  For fixed finite-dimensional spaces and fixed $\Dt>0$, there are $\delta_0>0$ and $K_{h,\Dt}>0$ such that
\begin{equation}
 \left\|R(\Dt A_{h,\delta})-E_hR(\Dt L_{D,h})P_h\right\|
 \le K_{h,\Dt}\delta,
\label{eq:AP-estimate}
\end{equation}
for $0<\delta\le\delta_0$.
\end{theorem}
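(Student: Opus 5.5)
The plan is to reduce the estimate, via Lemma~\ref{lem:block-decomposition}, to a finite family of $1\times1$ and $2\times2$ blocks. I will verify the claimed $O(\delta)$ bound on each block separately and then take a maximum over the finitely many blocks. First I will check that the target operator $E_hR(\Dt L_{D,h})P_h$ decomposes in the same unitary basis $\operatorname{diag}(V,U)$. Since $L_{D,h}=-aG_h^*G_h=-aV\Sigma^*\Sigma V^*$, we get $R(\Dt L_{D,h})=V\,\mathrm{diag}(R(-a\Dt\sigma_j^2))\,V^*$ on $(\ker G_h)^\perp$ and the identity on $\ker G_h$. Moreover $E_hu=(u,-aG_hu)$ maps the $j$-th singular coordinate $\widehat u_j$ to $(\widehat u_j,-a\sigma_j\widehat u_j)$. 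On the coupled block $j$ the limit operator is therefore
\[
 \begin{bmatrix}1&0\\-a\sigma_j&0\end{bmatrix}R(-a\Dt\sigma_j^2).
\]
On $\ker G_h$ it is $u_0\mapsto(u_0,0)$, which agrees with the identity action of $R(\Dt A_{h,\delta})$ there, since $-aG_hu_0=0$. On $\ker G_h^*$ the limit operator is zero, because $P_h$ discards $q$. Because the basis change is unitary and the direct sum is orthogonal, the operator-norm difference equals the maximum of the blockwise differences.

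Next I will bound the three kinds of blocks. On $\ker G_h$ the difference vanishes. On $\ker G_h^*$ the difference is $|R(-\Dt/\delta)|$, which is at most $M\delta/\Dt$ once $\Dt/\delta\ge z_0$, by Lemma~\ref{lem:rational-bounds}. On a coupled block I will use the spectral representation from the proof of Lemma~\ref{lem:uniform-step}:
\[
 R(\Dt B_{j,\delta})=R(\Dt\lambda_{s,j})\Pi_{s,j}+R(\Dt\lambda_{f,j})\Pi_{f,j}.
\]
This gives three terms to control.
\begin{enumerate}[label=(\roman*)]
\item The fast term satisfies $|R(\Dt\lambda_{f,j})|\,\|\Pi_{f,j}\|\le M\|\Pi_{f,j}\|/(\Dt|\lambda_{f,j}|)=O(\delta)$. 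This uses \eqref{eq:eigexpand}, which gives $|\lambda_{f,j}|\ge c/\delta$, together with the uniform projector bound.
\item The slow factor satisfies $R(\Dt\lambda_{s,j})-R(-a\Dt\sigma_j^2)=O(\delta)$. This follows from the compact Lipschitz property in Lemma~\ref{lem:rational-bounds}, since $\lambda_{s,j}=-a\sigma_j^2+O(\delta)$ stays in a fixed compact subset of the nonpositive axis.
\item The slow projector satisfies $\Pi_{s,j}-\begin{bmatrix}1&0\\-a\sigma_j&0\end{bmatrix}=O(\delta)$, by \eqref{eq:projector}.
\end{enumerate}
Combining these with boundedness of $R$ on the compact set gives an $O(\delta)$ bound for each coupled block, with constants depending on $\Dt$, $a$ and $\sigma_j$. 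The finiteness of $\{\sigma_j\}$ then yields a uniform $\delta_0$ and $K_{h,\Dt}$.

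One preliminary point must be settled before any of this applies. The hypotheses of exact closure $\GG_h=A_{h,\delta}^2$ and exact stage solves mean that the one-step map is literally the rational matrix function $R(\Dt A_{h,\delta})$. This follows because \eqref{eq:stage1}--\eqref{eq:stage2} are linear, and the denominators $d_1(\Dt A)$ and $d_2(\Dt A)$ are invertible: their zeros lie in the open right half-plane, while every eigenvalue of $A_{h,\delta}$ is real and nonpositive in the block decomposition, so the nonnormality does not matter. The main obstacle I expect is the rigorous $O(\delta)$ claim for the projector in item (iii). I would verify it directly from $\Pi_{s,j}=(B_{j,\delta}-\lambda_{f,j}I)/(\lambda_{s,j}-\lambda_{f,j})$. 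Multiplying numerator and denominator by $\delta$ gives entries $(\delta\lambda_s,\ \delta\sigma_j;\ -a\sigma_j,\ -1-\delta\lambda_f)/\sqrt{1-4a\delta\sigma_j^2}$. Expanding these entries shows they equal $(1,0;-a\sigma_j,0)+O(\delta)$ entrywise, which confirms \eqref{eq:projector} with explicit constants.
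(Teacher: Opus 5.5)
Your proposal is correct and matches the paper's proof step for step: the same unitary block reduction, the same splitting $R(\Dt B_{j,\delta})=R(\Dt\lambda_{s,j})\Pi_{s,j}+R(\Dt\lambda_{f,j})\Pi_{f,j}$, the Lipschitz-plus-projector estimate for the slow part, the $M/|z|$ bound for the fast and transverse parts, and a maximum over finitely many blocks; you also spell out two things the paper takes for granted, namely the block form of $E_hR(\Dt L_{D,h})P_h$ and why exact closure with exact stage solves gives precisely $R(\Dt A_{h,\delta})$. One slip in your projector check: the $(1,1)$ entry of $\delta(B_{j,\delta}-\lambda_{f,j}I)$ is $-\delta\lambda_{f,j}=\tfrac12\bigl(1+\sqrt{1-4a\delta\sigma_j^2}\bigr)\to1$, not $\delta\lambda_{s,j}$ (that is the $(2,2)$ entry, because $\lambda_{s,j}+\lambda_{f,j}=-1/\delta$), and only after this correction do the entries tend to $\left[\begin{smallmatrix}1&0\\-a\sigma_j&0\end{smallmatrix}\right]$ as you claim.
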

\begin{proof}
Use the unitary decomposition of Lemma~\ref{lem:block-decomposition}.  On a scalar component belonging to $\ker G_h$, the exact relaxation matrix is zero, the time map is the identity, and the limit operator $E_hR(\Dt L_{D,h})P_h$ is also the identity.  On a component of $\ker G_h^*$, the limit operator vanishes and the full map is multiplication by $R(-\Dt/\delta)$.  Lemma~\ref{lem:rational-bounds} gives
\begin{equation}
 |R(-\Dt/\delta)|\le M\frac{\delta}{\Dt},
\end{equation}
for sufficiently small $\delta$.

Fix a nonzero singular value $\sigma_j$.  Since $B_{j,\delta}$ has two distinct eigenvalues for $\delta\le\delta_0$, functional calculus gives the exact identity
\begin{equation}
 R(\Dt B_{j,\delta})
 =R(\Dt\lambda_{s,j})\Pi_{s,j}
 +R(\Dt\lambda_{f,j})\Pi_{f,j}.
\label{eq:AP-block-functional}
\end{equation}
Let
\begin{equation}
 \Pi_{0,j}=\begin{bmatrix}1&0\\-a\sigma_j&0\end{bmatrix}.
\end{equation}
The eigenvalue and projector expansions imply, with constants uniform over the finite set of singular values,
\begin{equation}
 |\lambda_{s,j}+a\sigma_j^2|\le K_h\delta,
 \qquad \|\Pi_{s,j}-\Pi_{0,j}\|\le K_h\delta.
\label{eq:AP-slow-bounds}
\end{equation}
The slow arguments $\Dt\lambda_{s,j}$ remain in a fixed compact interval of the negative real axis.  If $L_R$ is a Lipschitz constant of $R$ on that interval, then
\begin{align}
&\left\|R(\Dt\lambda_{s,j})\Pi_{s,j}
 -R(-a\sigma_j^2\Dt)\Pi_{0,j}\right\|\nonumber\\
&\quad\le
 L_R\Dt|\lambda_{s,j}+a\sigma_j^2|\,\|\Pi_{s,j}\|
 +|R(-a\sigma_j^2\Dt)|\,\|\Pi_{s,j}-\Pi_{0,j}\|
 \le K_{h,\Dt}\delta.
\label{eq:AP-slow-error}
\end{align}
For the fast term, the eigenvalue formula gives $|\lambda_{f,j}|\ge c_h/\delta$ after reducing $\delta_0$ if necessary.  Therefore Lemma~\ref{lem:rational-bounds} and the uniform projector bound yield
\begin{equation}
 \|R(\Dt\lambda_{f,j})\Pi_{f,j}\|
 \le K_h\frac{\delta}{\Dt}.
\label{eq:AP-fast-error}
\end{equation}
The matrix $R(-a\sigma_j^2\Dt)\Pi_{0,j}$ is exactly the coupled-block representation of $E_hR(\Dt L_{D,h})P_h$.  Combining \eqref{eq:AP-block-functional}--\eqref{eq:AP-fast-error}, taking the maximum over the finite block set, and using invariance of the operator norm under the unitary basis transformation proves \eqref{eq:AP-estimate}.
\end{proof}

\begin{corollary}[Fixed finite-time interval]
Let $n\le N_T$ be fixed independently of $\delta$, and let the initial data be uniformly bounded. Then
\begin{equation}
 \left\|R(\Dt A_{h,\delta})^n
 -\bigl(E_hR(\Dt L_{D,h})P_h\bigr)^n\right\|
 \le C_{h,\Dt,N_T}\delta.
\end{equation}
\end{corollary}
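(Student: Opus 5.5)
The plan is to prove the corollary by a telescoping identity for powers, combined with the one-step estimate of Theorem~\ref{thm:fullstep-ap} and uniform power bounds for both factors. Write $T_\delta=R(\Dt A_{h,\delta})$ and $T_0=E_hR(\Dt L_{D,h})P_h$. The starting point is the identity
\begin{equation*}
 T_\delta^n-T_0^n=\sum_{k=0}^{n-1}T_\delta^{\,n-1-k}\,(T_\delta-T_0)\,T_0^{\,k},
\end{equation*}
which holds for arbitrary (possibly noncommuting) operators. Taking norms gives
\begin{equation*}
\|T_\delta^n-T_0^n\|\le \sum_{k=0}^{n-1}\|T_\delta\|^{n-1-k}\,\|T_\delta-T_0\|\,\|T_0\|^{k}.
\end{equation*}

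Three ingredients bound this sum. First, Theorem~\ref{thm:fullstep-ap} gives $\|T_\delta-T_0\|\le K_{h,\Dt}\delta$ for $0<\delta\le\delta_0$. Second, Lemma~\ref{lem:uniform-step} gives $\|T_\delta\|\le M_{h,\Dt}$ uniformly in $\delta$, after shrinking $\delta_0$ if needed. Third, $T_0$ does not depend on $\delta$ at all, so $\|T_0\|\le \|E_h\|\,\|R(\Dt L_{D,h})\|\,\|P_h\|$ is a fixed finite constant. It is even sharper to note that $P_hE_h=I$, so $T_0^k=E_hR(\Dt L_{D,h})^kP_h$. Since $L_{D,h}$ is self-adjoint and negative semidefinite, A-stability gives $\|R(\Dt L_{D,h})\|\le1$, and hence $\|T_0^k\|\le\|E_h\|\le(1+a^2\|G_h\|^2)^{1/2}$.

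Set $M=\max(M_{h,\Dt},\|T_0\|,1)$. Then the sum is bounded by $n M^{n-1}K_{h,\Dt}\delta\le N_T M^{N_T-1}K_{h,\Dt}\delta$, which is the claimed $C_{h,\Dt,N_T}\delta$ estimate. The bounded-initial-data hypothesis converts this operator estimate into a state estimate with an extra factor $\sup\|\UU^0\|$.

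No step is a real obstacle; the only point needing care is that the constant may grow geometrically in $N_T$, because $\|T_\delta\|$ need not be at most $1$ (the projectors $\Pi_{s,j}$ are not orthogonal). This is why $n$ must be fixed independently of $\delta$, and the corollary claims no uniformity in $N_T$.
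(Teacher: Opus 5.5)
Your proposal is correct and matches the paper's proof essentially step for step. Both use the telescoping identity $X^n-Y^n=\sum_{k=0}^{n-1}X^{n-1-k}(X-Y)Y^k$, Theorem~\ref{thm:fullstep-ap} for the one-step difference, Lemma~\ref{lem:uniform-step} for powers of the relaxation step, and $P_hE_h=I$ with A-stability and $L_{D,h}\le 0$ to obtain $\|Y^k\|\le\|E_h\|\,\|P_h\|$; your explicit values $\|P_h\|=1$ and $\|E_h\|=(1+a^2\|G_h\|^2)^{1/2}$ only sharpen that bound.
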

\begin{proof}
Set $X=R(\Dt A_{h,\delta})$ and $Y=E_hR(\Dt L_{D,h})P_h$.  The identity $P_hE_h=I$ implies
\begin{equation}
 Y^k=E_hR(\Dt L_{D,h})^kP_h,
\end{equation}
so A-stability of $R$ and negative semidefiniteness of $L_{D,h}$ give
$\|Y^k\|\le\|E_h\|\,\|P_h\|$ for all $k$.  Lemma~\ref{lem:uniform-step} gives $\|X^k\|\le M_{h,\Dt}^k$; because $k\le N_T$, these powers are uniformly bounded independently of $\delta$.  Apply
\begin{equation}
 X^n-Y^n=\sum_{k=0}^{n-1}X^{n-1-k}(X-Y)Y^k
\end{equation}
and use Theorem~\ref{thm:fullstep-ap} on $X-Y$.  The finite sum contains at most $N_T$ terms, which proves the stated $O(\delta)$ estimate.
\end{proof}

\begin{remark}[Scope of the theorem]
The constant may depend on $h$ and $\Dt$; the theorem establishes AP, not an $h$-uniform error bound. It is a full-step result because $H(z)\to1$ for $z\to-\infty$, so an unprepared fast component can remain at the half stage. The theorem also assumes exact compatible spatial operators and exact stage solves. It does not assert a general nonlinear-WENO AP theorem.
\end{remark}

\subsection{Perturbed derivative and inexact solve}
The exact assumptions clarify how implementation errors enter.
\begin{proposition}[Strict AP versus AP up to implementation tolerance]
Suppose a computed one-step map $\widetilde R_{h,\delta}$ satisfies
\begin{equation}
 \|\widetilde R_{h,\delta}-R(\Dt A_{h,\delta})\|\le\eta_{h,\delta}.
\end{equation}
Then
\begin{equation}
 \|\widetilde R_{h,\delta}-E_hR(\Dt L_{D,h})P_h\|
 \le K_{h,\Dt}\delta+\eta_{h,\delta}.
\label{eq:AP-perturbed}
\end{equation}
Therefore:
\begin{enumerate}[leftmargin=1.8em]
\item if $\eta_{h,\delta}=o(1)$ as $\delta\to0$, the computed method is strictly AP;
\item if $\eta_{h,\delta}=O(\delta)$, the $O(\delta)$ rate of Theorem~\ref{thm:fullstep-ap} is retained;
\item if $\eta_{h,\delta}\le\eta_{\rm tol}$ is fixed, then the method is AP only up to an $O(\eta_{\rm tol})$ implementation floor.
\end{enumerate}
\end{proposition}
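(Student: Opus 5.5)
The estimate \eqref{eq:AP-perturbed} is a triangle inequality. I will insert the exact full-step map $R(\Dt A_{h,\delta})$ between the computed map and the limit operator:
\[
 \widetilde R_{h,\delta}-E_hR(\Dt L_{D,h})P_h
 =\bigl(\widetilde R_{h,\delta}-R(\Dt A_{h,\delta})\bigr)
 +\bigl(R(\Dt A_{h,\delta})-E_hR(\Dt L_{D,h})P_h\bigr).
\]
The first bracket is bounded by $\eta_{h,\delta}$ by hypothesis. The second bracket is bounded by $K_{h,\Dt}\delta$ for $0<\delta\le\delta_0$ by Theorem~\ref{thm:fullstep-ap}. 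Its hypotheses (compatibility \eqref{eq:compatible}, exact closure, exact stage solves, L-stable $C$) concern only the exact map $R(\Dt A_{h,\delta})$, so they are legitimately available here even though $\widetilde R_{h,\delta}$ itself violates them. This proves \eqref{eq:AP-perturbed}, with the same $\delta_0$ and $K_{h,\Dt}$.

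The three consequences follow from the right-hand side $K_{h,\Dt}\delta+\eta_{h,\delta}$.
\begin{enumerate}[leftmargin=1.8em]
\item If $\eta_{h,\delta}=o(1)$, both terms tend to zero as $\delta\to0$ at fixed $h,\Dt$. This is exactly the operator-norm convergence required by the definition of full-step AP, so the computed method is strictly AP.
\item If $\eta_{h,\delta}\le K'\delta$, the right-hand side is at most $(K_{h,\Dt}+K')\delta$, which retains the $O(\delta)$ rate.
\item If only $\eta_{h,\delta}\le\eta_{\rm tol}$ is known, then $\limsup_{\delta\to0}\|\widetilde R_{h,\delta}-E_hR(\Dt L_{D,h})P_h\|\le\eta_{\rm tol}$, which is the stated implementation floor.
\end{enumerate}

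There is no substantive obstacle. The one point to state explicitly is that $K_{h,\Dt}$ and $\delta_0$ come from Theorem~\ref{thm:fullstep-ap} and depend only on the exact scheme, not on the perturbation. Item (iii) is an upper bound only: without further structure on the perturbation, one cannot claim that the floor is actually attained.
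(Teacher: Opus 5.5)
Your proof is correct and follows the paper's argument: insert $R(\Dt A_{h,\delta})$, apply the triangle inequality with Theorem~\ref{thm:fullstep-ap} for $0<\delta\le\delta_0$, and obtain the three cases by letting $\delta\to0$. Your two added remarks are accurate clarifications of points the paper leaves implicit: $K_{h,\Dt}$ and $\delta_0$ depend only on the exact scheme, and item (iii) is only an upper bound.
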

\begin{proof}
Equation \eqref{eq:AP-perturbed} follows from the triangle inequality and Theorem~\ref{thm:fullstep-ap}.  The three conclusions follow by taking the corresponding limits.
\end{proof}
This distinction is important in strongly singular calculations: an ADER approximation of $\GG_h$ and a Newton tolerance must be selected so that their one-step perturbation does not dominate either the temporal error or the desired AP rate.

\section{Uniform-accuracy analysis across and beyond the relaxation initial layer}
\subsection{Definition, modal preparation, and a rational smoothing lemma}
The AP theorem concerns the limit at fixed $\Delta t$; it does not by itself control the temporal error uniformly for all $\delta$.  Throughout this section the spatial discretization is fixed and $\delta_0$ is chosen so that $4a\delta_0\sigma_{\max}^2<1$, where $\sigma_{\max}$ is the largest nonzero singular value of $G_h$.  Hence all coupled blocks have distinct real slow and fast eigenvalues for $0<\delta\le\delta_0$.  For this fixed discretization, call the method \emph{uniformly accurate of order $p$} on $[0,T]$ if
\begin{equation}
 \sup_{0<\delta\le\delta_0}\max_{0\le n\Delta t\le T}
 \|U_{h,\delta}^n-U_{h,\delta}(t_n)\|
 \le C_T\Delta t^p,
\label{eq:UA-definition}
\end{equation}
where $C_T$ is independent of $\delta$ and $\Delta t$.  The maximum includes the relaxation initial layer.  This is stronger than AP and stronger than design-order convergence after the initial layer has decayed.

Consider one nonzero singular mode of \eqref{eq:Bblock}, suppressing its index:
\begin{equation}
 B_\delta=\begin{bmatrix}0&\sigma\\-a\sigma/\delta&-1/\delta\end{bmatrix},
 \qquad v_s=\begin{bmatrix}1\\\lambda_s/\sigma\end{bmatrix},\qquad
 v_f=\begin{bmatrix}1\\\lambda_f/\sigma\end{bmatrix}.
\label{eq:UA-block}
\end{equation}
For $U_0=(u_0,q_0)^T=\beta_sv_s+\beta_fv_f$,
\begin{equation}
 \beta_f=\frac{\sigma q_0-\lambda_su_0}{\lambda_f-\lambda_s}.
\label{eq:UA-fast-coefficient}
\end{equation}
Exact slow preparation means $\beta_f=0$.  The limiting-equilibrium preparation $q_0=-a\sigma u_0$ is close to, but not identical with, the finite-$\delta$ slow eigenspace.

\begin{lemma}[Fast-mode rational approximation and smoothing]\label{lem:fast-rational}
Let $R$ be the stability function of an L-stable fourth-order member of the family.  There are constants $C,c,x_0>0$ such that, for every $x>0$ and integer $n\ge1$,
\begin{equation}
 |R(-x)^n-e^{-nx}|\le C\min\{1,x^4\}.
\label{eq:fast-alltime}
\end{equation}
Moreover, for every $t_0>0$,
\begin{equation}
 \sup_{x>0}\sup_{n\Delta t\ge t_0}|R(-x)^n-e^{-nx}|
 \le C_{t_0}\Delta t^4.
\label{eq:fast-postlayer}
\end{equation}
\end{lemma}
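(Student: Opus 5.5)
The plan is to use the telescoping identity for powers together with two regimes of $x$: a small-argument regime governed by the fourth-order consistency of $R$, and a large-argument regime governed by L-stability. For the first regime I use the expansion \eqref{eq:small-expansion} of Proposition~\ref{prop:accuracy-damping}. It gives $|R(-x)-e^{-x}|\le C_1x^5$ for $0<x\le x_1$, and also $R(-x)=1-x+O(x^2)$. Shrinking to some $x_0\le x_1$, this yields $0<R(-x)\le 1-x/2\le e^{-cx}$ on $(0,x_0]$ with $c=1/2$.

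For the second regime I need $\rho:=\sup_{x\ge x_0}|R(-x)|<1$. This is obtained as in the proof of the global Prothero--Robinson theorem. First, $R\to0$ at $-\infty$ by the degree count. Second, $|R(-x)|<1$ pointwise for $x>0$: A-stability gives $|R|\le1$, $R$ is nonconstant, and the negative real axis lies in the open left half-plane, so the maximum-modulus principle gives strict inequality there. Continuity then makes the supremum strictly less than one.

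\emph{Bound \eqref{eq:fast-alltime}.} For $x\ge x_0$, both $|R(-x)|^n$ and $e^{-nx}$ are at most $1$. Hence the difference is at most $2\le 2x_0^{-4}\min\{1,x^4\}$. For $0<x\le x_0$, I write
\[
R(-x)^n-e^{-nx}=\bigl(R(-x)-e^{-x}\bigr)\sum_{k=0}^{n-1}R(-x)^{n-1-k}e^{-kx}.
\]
Every factor in the sum is at most $e^{-c(n-1)x}$ (taking $c\le1$). This gives the bound $C_1 n x^5 e^{-c(n-1)x}\le C_1e^{c x_0}\,x^4\,(nx)e^{-cnx}$. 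Since $\sup_{s>0}se^{-cs}<\infty$, the right-hand side is $\le Cx^4=C\min\{1,x^4\}$, after taking $x_0\le1$.

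\emph{Bound \eqref{eq:fast-postlayer}.} Now $n\ge t_0/\Delta t$. For $x\le x_0$, the same telescoped bound is rewritten as $C n x^5e^{-cnx}=C\,n^{-4}(nx)^5e^{-cnx}\le C' n^{-4}\le C'(\Delta t/t_0)^4$. For $x\ge x_0$, the difference is at most $\rho^n+e^{-nx_0}\le 2\bar\rho^{\,n}$ with $\bar\rho=\max\{\rho,e^{-x_0}\}<1$. Then $\bar\rho^{\,t_0/\Delta t}\le C_{t_0}\Delta t^4$, because $s^4\bar\rho^{\,t_0/s}$ is bounded for $s>0$. If $\Delta t$ is large relative to $t_0$, the trivial bound $2$ is absorbed into $C_{t_0}\Delta t^4$.

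The main obstacle is the uniform contractivity estimate $|R(-x)|\le e^{-cx}$ near zero, together with the strict bound $\rho<1$ away from zero. Both must hold for the whole L-stable parameter interval, since they are what convert the single-step defect $O(x^5)$ into an $n$-independent bound. I would verify them from the explicit form \eqref{eq:Rreduced} and from A-stability, as described above. The rest is the elementary optimization of $(nx)^ke^{-cnx}$.
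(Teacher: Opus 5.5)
Your proof is correct and follows the paper's argument essentially step for step: the same split at $x_0$, the same telescoping factorization combined with $|R(-x)|\le e^{-cx}$ near zero to get the $x^4$ and $n^{-4}(nx)^5e^{-cnx}$ bounds, and the same $\rho<1$ from the strict maximum principle plus $R\to0$ in the large-$x$ post-layer regime. One small slip: the function that must be bounded is $s^{-4}\bar\rho^{\,t_0/s}$, not $s^{4}\bar\rho^{\,t_0/s}$; since $s^{-4}\bar\rho^{\,t_0/s}$ is bounded on all of $s>0$, your separate remark about large $\Delta t$ is not actually needed.
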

\begin{proof}
Fourth-order consistency gives $R(-x)=e^{-x}+O(x^5)$ as $x\downarrow0$, so there are $K>0$ and $x_0>0$ such that
\begin{equation}
 |R(-x)-e^{-x}|\le Kx^5,
\qquad 0<x\le x_0.
\label{eq:fast-local-defect}
\end{equation}
Moreover, $R(-x)=1-x+O(x^2)$.  After reducing $x_0$, $R(-x)$ is positive and
$\log R(-x)=-x+O(x^2)\le-cx$ for some $c\in(0,1)$.  Hence
\begin{equation}
 |R(-x)|\le e^{-cx},\qquad 0<x\le x_0.
\label{eq:fast-small-contract}
\end{equation}
The exact factorization
\begin{equation}
 R(-x)^n-e^{-nx}=(R(-x)-e^{-x})
 \sum_{j=0}^{n-1}R(-x)^{n-1-j}e^{-jx}
\label{eq:fast-factorization}
\end{equation}
combined with \eqref{eq:fast-local-defect}--\eqref{eq:fast-small-contract} gives
\begin{equation}
 |R(-x)^n-e^{-nx}|
 \le Kx^5\sum_{j=0}^{n-1}e^{-cx(n-1-j)}e^{-xj}
 \le Kx^5\min\{n,x^{-1}\}
 \le Kx^4.
\end{equation}
For $x\ge x_0$, A-stability gives $|R(-x)|\le1$ and $e^{-x}\le1$, hence the difference is at most two.  Enlarging the constant proves \eqref{eq:fast-alltime} for all $x>0$.

Now assume $n\Delta t\ge t_0$.  For $0<x\le x_0$, \eqref{eq:fast-factorization} also gives
\begin{equation}
 |R(-x)^n-e^{-nx}|\le Knx^5e^{-c_1nx}
 =K n^{-4}(nx)^5e^{-c_1nx}
 \le K_{t_0}\Delta t^4,
\end{equation}
where $c_1>0$, the function $y^5e^{-c_1y}$ is bounded, and $n^{-4}\le(\Delta t/t_0)^4$.  For $x\ge x_0$, the strong maximum principle applied to the nonconstant analytic stability function gives $|R(-x)|<1$.  Continuity and $R(-x)\to0$ imply
\begin{equation}
 \rho:=\sup_{x\ge x_0}|R(-x)|<1.
\end{equation}
Therefore
\begin{equation}
 |R(-x)^n-e^{-nx}|\le \rho^n+e^{-nx_0}
 \le 2e^{-c_0n}\le K_{t_0}\Delta t^4,
\end{equation}
where the final inequality follows because an exponential in $1/\Delta t$ is bounded by any fixed algebraic power for sufficiently small $\Delta t$.  This proves \eqref{eq:fast-postlayer}.
\end{proof}

\subsection{Uniform fourth order under exact and constructive preparation}
\begin{theorem}[Uniform fourth order for a $\delta$-dependent exact slow-data family]\label{thm:UA-slow}
Fix a finite compatible spatial discretization and $T>0$.  For every $0<\delta\le\delta_0$, let $U_0^{\delta}$ have no fast or transverse component, equivalently $U_0^{\delta}=\Pi_s(\delta)U_0^{\delta}$ on every coupled block, and assume $\sup_{\delta}\|U_0^{\delta}\|<\infty$.  Then, for sufficiently small $\Delta t$,
\begin{equation}
 \max_{0\le n\Delta t\le T}
 \|R(\Delta t A_{h,\delta})^nU_0^{\delta}-e^{t_nA_{h,\delta}}U_0^{\delta}\|
 \le C_{h,T}\Delta t^4\|U_0^{\delta}\|,
\label{eq:UA-slow}
\end{equation}
with a constant independent of $0<\delta\le\delta_0$.
\end{theorem}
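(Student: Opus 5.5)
The proof reduces to scalar statements about the slow eigenvalues, using the unitary slow--fast decomposition of Lemma~\ref{lem:block-decomposition}. Because the basis change $\operatorname{diag}(V,U)$ is unitary and both $R(\Delta t A_{h,\delta})^n$ and $e^{t_nA_{h,\delta}}$ respect the orthogonal direct sum, the error norm squared is the sum of the squared block errors. It therefore suffices to bound each block separately by $C\Delta t^4$ times the norm of the corresponding component of $U_0^\delta$, with $C$ independent of $\delta$ and of the block index.

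\emph{Kernel and transverse blocks.} On $\ker G_h$ the operator is zero, so both the numerical map and the exact flow are the identity and the error vanishes. On $\ker G_h^*$ the hypothesis (no transverse component) says the data component is zero, so again the error vanishes.

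\emph{Coupled blocks.} On a coupled block $B_{j,\delta}$ the hypothesis $U_0^\delta=\Pi_{s,j}U_0^\delta$ means the component is an eigenvector for $\lambda_{s,j}$. Functional calculus \eqref{eq:AP-block-functional} then gives
\[
 R(\Delta tB_{j,\delta})^nU_0=R(\Delta t\lambda_{s,j})^nU_0,\qquad e^{t_nB_{j,\delta}}U_0=e^{t_n\lambda_{s,j}}U_0 .
\]
The block error is therefore exactly $|R(-x_j)^n-e^{-nx_j}|\,\|U_0\|$ with $x_j=-\Delta t\lambda_{s,j}>0$. This step uses no projector estimates, because the data are exactly slow. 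No bound on the fast factor is needed either.

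\emph{Uniform control of the slow eigenvalues.} The key point is a $\delta$-uniform bound on $\lambda_{s,j}$. Rationalizing the eigenvalue formula gives
\[
 \lambda_{s,j}=\frac{-2a\sigma_j^2}{1+\sqrt{1-4a\delta\sigma_j^2}},
\]
so $a\sigma_j^2\le|\lambda_{s,j}|\le 2a\sigma_{\max}^2$ for all $0<\delta\le\delta_0$. Choose $\Delta t$ small enough that $2a\sigma_{\max}^2\Delta t\le x_0$, where $x_0$ is the threshold of Lemma~\ref{lem:fast-rational}. Then every $x_j$ lies in $(0,x_0]$ uniformly in $\delta$ and $j$.

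\emph{Scalar estimate.} For such $x_j$, estimate \eqref{eq:fast-alltime}, or more precisely its proof via the factorization \eqref{eq:fast-factorization}, gives
\[
 |R(-x_j)^n-e^{-nx_j}|\le Kx_j^4\le K(2a\sigma_{\max}^2)^4\Delta t^4 .
\]
This bound holds for every $n$, so in particular for all $n\Delta t\le T$, and it already includes the layer region, since there is no layer for slow data. Summing the squared block errors and undoing the unitary change of basis yields \eqref{eq:UA-slow} with $C_{h,T}=K(2a\sigma_{\max}^2)^4$. This constant is independent of $\delta$; the assumption $\sup_\delta\|U_0^\delta\|<\infty$ only guarantees that the right-hand side stays bounded.

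The main obstacle is the uniform slow-eigenvalue bound. One must make sure that the admissible step restriction $\Delta t\le x_0/(2a\sigma_{\max}^2)$ does not degenerate as $\delta\to0$. The rationalized formula above handles this, using $4a\delta_0\sigma_{\max}^2<1$.
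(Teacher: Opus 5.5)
Your proof is correct and follows essentially the paper's route: a blockwise reduction to the scalar factors $R(\Delta t\lambda_{s,j})^n$ and $e^{t_n\lambda_{s,j}}$, a $\delta$-uniform compact range for the slow eigenvalues, and a telescoping fourth-order consistency estimate. The only variation is that you bound the powers of $R$ through the small-$x$ contractivity built into Lemma~\ref{lem:fast-rational}, where the paper bounds them by one using A-stability; this gives you a $T$-independent constant. You also rightly observe that the uniform projector bounds mentioned in the paper are not needed for exactly slow data.
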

\begin{proof}
On the exact slow subspace, the numerical and exact maps reduce blockwise to $R(\Delta t\lambda_s)^n$ and $e^{t_n\lambda_s}$.  The slow eigenvalues remain in a fixed compact subset of the closed left half-plane and the slow projectors are uniformly bounded.  Fourth-order consistency gives $|R(z)-e^z|\le C|z|^5$ uniformly on that compact set.  The standard telescoping identity and A-stability then give $C_T\Delta t^4$.  Taking the finite maximum over the singular blocks proves \eqref{eq:UA-slow}.
\end{proof}

The exact slow projector is useful analytically but is not always convenient for initialization.  For the linear model, the slow relation can be approximated explicitly.  Expanding the slow eigenvalue gives
\begin{equation}
 \frac{\lambda_s}{\sigma}=-a\sigma-a^2\sigma^3\delta-2a^3\sigma^5\delta^2-5a^4\sigma^7\delta^3+O(\delta^4).
\label{eq:CE-expansion}
\end{equation}
This motivates the third-order Chapman--Enskog preparation
\begin{equation}
 q_0^{[3]}=-\left(a\sigma+a^2\sigma^3\delta+2a^3\sigma^5\delta^2+5a^4\sigma^7\delta^3\right)u_0.
\label{eq:CE3}
\end{equation}
In operator form,
\begin{equation}
 q_0^{[3]}=-aG_hu_0-a^2\delta G_h(G_h^*G_h)u_0-2a^3\delta^2G_h(G_h^*G_h)^2u_0-5a^4\delta^3G_h(G_h^*G_h)^3u_0.
\label{eq:CE3-operator}
\end{equation}
The preparation is constructive on a fixed compatible grid, but it contains increasing powers of the discrete diffusion operator.  It is therefore not an $h$-uniform initialization theorem: its constants may grow under mesh refinement, and a nonlinear analogue requires derivatives of the equilibrium manifold.  These limitations are separate from the fixed-$h$, uniform-in-$\delta$ statement proved next.

\begin{proposition}[Constructive restoration of full-state uniform fourth order on a fixed grid]\label{prop:UA-CE3}
For the linear compatible relaxation system, initial data prepared by \eqref{eq:CE3-operator} satisfy
\begin{equation}
 \sup_{0<\delta\le\delta_0}\max_{0\le n\Delta t\le T}
 \|U_{h,\delta}^n-U_{h,\delta}(t_n)\|\le C_{h,T}\Delta t^4.
\label{eq:UA-CE3}
\end{equation}
\end{proposition}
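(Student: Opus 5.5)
The proof reduces, through the orthogonal decomposition of Lemma~\ref{lem:block-decomposition}, to a finite family of scalar and $2\times2$ problems. In the prepared data \eqref{eq:CE3-operator}, every term in $q_0$ lies in $\operatorname{range}G_h=(\ker G_h^*)^\perp$. The transverse component therefore vanishes identically. On $\ker G_h$ both the numerical and exact maps are the identity, so there is no error there. It remains to treat each coupled block $B_{j,\delta}$ with data $(u_0,q_0^{[3]})$, where $u_0$ is the (bounded) slow coordinate. I split the data as $U_0=\beta_s v_s+\beta_f v_f$ and write the error as
\[
 \beta_s\bigl(R(\Delta t\lambda_s)^n-e^{t_n\lambda_s}\bigr)v_s+\beta_f\bigl(R(\Delta t\lambda_f)^n-e^{t_n\lambda_f}\bigr)v_f .
\]
The slow part is bounded by $C_{h,T}\Delta t^4$ uniformly in $\delta$, by exactly the argument of Theorem~\ref{thm:UA-slow}. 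The projectors are uniformly bounded, so $|\beta_s|\le C|u_0|$, and $\Delta t\lambda_s$ stays in a fixed compact set.

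\textbf{Size of the fast coefficient.} By \eqref{eq:UA-fast-coefficient}, $\beta_f=(\sigma q_0^{[3]}-\lambda_s u_0)/(\lambda_f-\lambda_s)$. The numerator is $\sigma u_0$ times the truncation error of \eqref{eq:CE-expansion} after the $\delta^3$ term, hence $O(\delta^4)|u_0|$ with constants uniform over the finite set of singular values. To make this rigorous, I would observe that $\lambda_s/\sigma=-(1-\sqrt{1-4a\delta\sigma^2})/(2\delta\sigma)$ is analytic in $\delta$ for $4a\delta\sigma^2<1$, so Taylor's remainder on $[0,\delta_0]$ gives the bound. The denominator satisfies $|\lambda_f-\lambda_s|\ge c/\delta$. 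Together these give $|\beta_f|\le C\delta^5|u_0|$. The fast eigenvector $v_f$ has norm $O(\delta^{-1})$, since $\lambda_f/\sigma\sim-1/(\delta\sigma)$. Hence $\|\beta_f v_f\|\le C\delta^4|u_0|$.

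\textbf{Fast error term.} Lemma~\ref{lem:fast-rational} with $x=\Delta t|\lambda_f|$ gives
\[
 |R(\Delta t\lambda_f)^n-e^{t_n\lambda_f}|\le C\min\{1,x^4\}\le C .
\]
The fast contribution is therefore at most $C\delta^4|u_0|$. This is the step I expect to be the main obstacle, because $\delta^4$ must be converted into $\Delta t^4$ uniformly, and that requires splitting into two regimes.
\begin{itemize}
\item If $\delta\le\Delta t$, then $\delta^4\le\Delta t^4$ and the bound is immediate.
\item If $\delta>\Delta t$, I use the other branch of \eqref{eq:fast-alltime}. Since $x=\Delta t|\lambda_f|\le C\Delta t/\delta$, the fast error is at most $C\delta^4(\Delta t/\delta)^4|u_0|=C\Delta t^4|u_0|$.
\end{itemize}
In both regimes the fast contribution is $O(\Delta t^4)$. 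This is exactly why three correction orders are needed: the product $\delta^4\min\{1,(\Delta t/\delta)^4\}$ must be at most $\Delta t^4$.

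Summing the slow and fast bounds, taking the maximum over blocks and over $n\Delta t\le T$, and using unitary invariance of the norm proves \eqref{eq:UA-CE3}. The constant depends on $h$, $T$ and $\sup\|u_0\|$, but not on $\delta$. One point to watch is that $\|U_0\|$ itself is uniformly bounded: $q_0^{[3]}$ is a polynomial in $\delta$ applied to $u_0$, so this holds on the fixed grid.
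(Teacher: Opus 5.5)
Your proposal is correct and follows the paper's proof essentially step by step. Both arguments show that the fast coefficient is $O(\delta^5)$, that $\|v_f\|=O(\delta^{-1})$ so the fast projection is $O(\delta^4)$, split into the regimes $\delta\le\Delta t$ and $\delta\ge\Delta t$ using Lemma~\ref{lem:fast-rational}, and bound the slow part by Theorem~\ref{thm:UA-slow}. Two steps the paper leaves implicit are handled explicitly in your version: that the transverse component vanishes because $q_0^{[3]}\in\operatorname{range}G_h$, and the analyticity argument for the uniform $O(\delta^4)$ Taylor remainder on $[0,\delta_0]$.
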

\begin{proof}
Consider one singular block and write the prepared flux as
\begin{equation}
 q_0^{[3]}=\frac{\lambda_s}{\sigma}u_0+r_\delta u_0,
 \qquad |r_\delta|\le K_h\delta^4,
\end{equation}
which follows from \eqref{eq:CE-expansion}; the constant may depend on the fixed singular value.  Formula \eqref{eq:UA-fast-coefficient} gives
\begin{equation}
 |\beta_f|=
 \left|\frac{\sigma r_\delta}{\lambda_f-\lambda_s}u_0\right|
 \le K_h\delta^5|u_0|,
\end{equation}
because $|\lambda_f-\lambda_s|\ge c_h/\delta$.  Since
$\|v_f\|\le K_h/\delta$, the complete fast projection satisfies
\begin{equation}
 \|\beta_fv_f\|\le K_h\delta^4|u_0|.
\label{eq:CE-fast-projection}
\end{equation}
The fast contribution to the temporal error at time $t_n$ is exactly
\begin{equation}
 \bigl(R(\Delta t\lambda_f)^n-e^{t_n\lambda_f}\bigr)\beta_fv_f.
\end{equation}
If $\delta\le\Delta t$, both amplification factors are bounded by one, and \eqref{eq:CE-fast-projection} gives $O(\Delta t^4)$.  If $\delta\ge\Delta t$, put $x=-\Delta t\lambda_f$.  On the fixed finite-dimensional block, $c_h\Delta t/\delta\le x\le C_h\Delta t/\delta$.  Lemma~\ref{lem:fast-rational} and \eqref{eq:CE-fast-projection} therefore give
\begin{equation}
 K_h\delta^4\min\{1,x^4\}
 \le K_h\delta^4(\Delta t/\delta)^4
 =K_h\Delta t^4.
\end{equation}
The slow component is bounded by Theorem~\ref{thm:UA-slow}.  Uniform projector bounds and the finite number of singular and transverse blocks complete the proof.
\end{proof}

\subsection{Accuracy barriers for limiting-equilibrium and unprepared data}
\begin{proposition}[Limiting-equilibrium preparation]\label{prop:UA-equilibrium}
Let $q_0=-a\sigma u_0$ on a coupled mode.  Then
\begin{equation}
 \beta_f=\frac{\delta\lambda_s^2}{\lambda_f-\lambda_s}u_0,
 \qquad |\beta_f|\le C\delta^2|u_0|,
 \qquad \left|\frac{\lambda_f}{\sigma}\beta_f\right|\le C\delta|u_0|.
\label{eq:UA-equilibrium-coeff}
\end{equation}
Consequently,
\begin{equation}
 \max_n\|U^n-U(t_n)\|\le C(\Delta t+\Delta t^4),\qquad
 \max_n|u^n-u(t_n)|\le C(\Delta t^2+\Delta t^4).
\label{eq:UA-equilibrium-bound}
\end{equation}
These orders are generically sharp along a transition sequence $\delta=\kappa\Delta t$ for which $R(-1/\kappa)\ne e^{-1/\kappa}$.
\end{proposition}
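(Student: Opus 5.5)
The plan is to reduce everything to the single coupled block \eqref{eq:UA-block} using the decomposition of Lemma~\ref{lem:block-decomposition}, and to split the data as $U_0=\beta_sv_s+\beta_fv_f$. The slow part is handled by Theorem~\ref{thm:UA-slow} and the fast part by Lemma~\ref{lem:fast-rational}.

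\emph{Transverse and kernel modes.} The datum $q_0=-aG_hu_0$ lies in $\operatorname{range}G_h$, which is orthogonal to $\ker G_h^*$, so there is no transverse component. On $\ker G_h$ the numerical and exact maps are both the identity. Hence only the coupled blocks contribute to the error.

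\emph{The coefficient identity \eqref{eq:UA-equilibrium-coeff}.} Insert $q_0=-a\sigma u_0$ into \eqref{eq:UA-fast-coefficient} to get
\[
\beta_f=\frac{-a\sigma^2-\lambda_s}{\lambda_f-\lambda_s}\,u_0 .
\]
The characteristic polynomial $\lambda^2+\delta^{-1}\lambda+a\sigma^2/\delta$ evaluated at $\lambda_s$ gives $-a\sigma^2-\lambda_s=\delta\lambda_s^2$, which is the stated formula. The bounds then follow from three facts on the fixed finite spectrum: $\lambda_s$ is bounded, $|\lambda_f-\lambda_s|\ge c/\delta$, and $|\lambda_f|\le C/\delta$. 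Together these give $|\beta_f|\le C\delta^2|u_0|$ and $|\lambda_f\beta_f/\sigma|\le C\delta|u_0|$. Since $u_0=\beta_s+\beta_f$, the slow coefficient $\beta_s$ is also bounded by $C|u_0|$.

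\emph{The error bounds \eqref{eq:UA-equilibrium-bound}.} The error at step $n$ is
\[
\beta_s\bigl(R(\Delta t\lambda_s)^n-e^{t_n\lambda_s}\bigr)v_s+\beta_f\bigl(R(-x)^n-e^{-nx}\bigr)v_f,\qquad x=-\Delta t\lambda_f .
\]
\begin{itemize}
\item The slow term is $O(\Delta t^4)$ uniformly in $\delta$, by the argument of Theorem~\ref{thm:UA-slow}.
\item For the fast term, note that $v_f=(1,\lambda_f/\sigma)^T$. Its $u$-component is therefore bounded by $C\delta^2\min\{1,x^4\}$ and its $q$-component by $C\delta\min\{1,x^4\}$, using \eqref{eq:fast-alltime}.
\item On the fixed block, $c\Delta t/\delta\le x\le C\Delta t/\delta$. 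Split into two cases. If $\delta\le\Delta t$, use the bound $1$, which gives $\delta^2\le\Delta t^2$ and $\delta\le\Delta t$. If $\delta\ge\Delta t$, use $x^4\le C(\Delta t/\delta)^4$, which gives $\delta^2(\Delta t/\delta)^4\le\Delta t^2$ and $\delta(\Delta t/\delta)^4\le\Delta t$.
\end{itemize}
Taking the maximum over the finitely many blocks and using unitarity completes the bounds.

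\emph{Sharpness.} This is the step that needs the most care, because the slow contribution must not cancel the fast one. Take $n=1$ and $\delta=\kappa\Delta t$ on a mode with $\sigma\neq0$.
\begin{itemize}
\item As $\Delta t\to0$ we have $x=\Delta t/\delta\,(1+O(\delta))\to1/\kappa$, so $R(-x)-e^{-x}\to R(-1/\kappa)-e^{-1/\kappa}\neq0$ by continuity.
\item The leading coefficient is $\beta_f=\delta^2a^2\sigma^4u_0\,(1+O(\delta))$.
\item The $u$-error is therefore $\kappa^2a^2\sigma^4\bigl(R(-1/\kappa)-e^{-1/\kappa}\bigr)u_0\,\Delta t^2+o(\Delta t^2)$.
\item Since $\lambda_f/\sigma\sim-1/(\sigma\delta)$, the $q$-error is of exact order $\Delta t$.
\end{itemize}
The slow contribution is $O(\Delta t^4)$, so it cannot cancel these leading terms. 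This shows that both orders are attained whenever $u_0\neq0$.
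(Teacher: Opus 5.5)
Your proposal is correct and follows the paper's proof essentially step for step: the characteristic identity for $\beta_f$, the slow/fast split using Theorem~\ref{thm:UA-slow} and Lemma~\ref{lem:fast-rational}, the case split $\delta\le\Delta t$ versus $\delta\ge\Delta t$, and first-step sharpness along $\delta=\kappa\Delta t$, which you make more explicit than the paper by computing the leading coefficients. One harmless slip: since $\lambda_f-\lambda_s=-\delta^{-1}\sqrt{1-4a\delta\sigma^2}<0$, in fact $\beta_f=-\delta^2a^2\sigma^4u_0(1+O(\delta))$, so your leading $u$-error should carry a minus sign; this does not affect the sharpness argument, which only needs that term to be nonzero.
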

\begin{proof}
With $q_0=-a\sigma u_0$, the numerator in \eqref{eq:UA-fast-coefficient} is
\begin{equation}
 \sigma q_0-\lambda_su_0=(-a\sigma^2-\lambda_s)u_0
 =\delta\lambda_s^2u_0,
\end{equation}
where the characteristic identity $\delta\lambda_s^2+\lambda_s+a\sigma^2=0$ was used.  This proves the formula for $\beta_f$.  Since $\lambda_s=O(1)$ and $\lambda_f-\lambda_s=O(\delta^{-1})$, one has $|\beta_f|\le K\delta^2|u_0|$.  The scalar component of $\beta_fv_f$ is therefore $O(\delta^2)$, while its flux component is
$|\lambda_f\beta_f/\sigma|=O(\delta)$.

The fast temporal error equals the fast projection multiplied by
$R(\Delta t\lambda_f)^n-e^{t_n\lambda_f}$.  If $\delta\le\Delta t$, the factor is bounded, so the full-state and scalar errors are respectively $O(\Delta t)$ and $O(\Delta t^2)$.  If $\delta\ge\Delta t$, Lemma~\ref{lem:fast-rational} with $x=-\Delta t\lambda_f\asymp\Delta t/\delta$ gives
\begin{equation}
 O(\delta)x^4=O(\Delta t^4/\delta^3)\le O(\Delta t),
 \qquad
 O(\delta^2)x^4=O(\Delta t^4/\delta^2)\le O(\Delta t^2).
\end{equation}
The slow component contributes $O(\Delta t^4)$ by Theorem~\ref{thm:UA-slow}, proving the upper bounds.

For sharpness, choose $\delta=\kappa\Delta t$ with fixed $\kappa>0$.  Then $\Delta t\lambda_f\to-1/\kappa$, the full and scalar fast amplitudes are respectively $\Theta(\Delta t)$ and $\Theta(\Delta t^2)$, and the first-step amplification defect tends to
$R(-1/\kappa)-e^{-1/\kappa}$.  Whenever this number is nonzero, the stated orders cannot be improved uniformly.
\end{proof}

\begin{proposition}[Unprepared initial data]\label{prop:UA-unprepared}
Let $m_0=q_0+a\sigma u_0$ be independent of $\delta$ and nonzero.  Then the fast macroscopic amplitude is $O(\delta)$ but the fast flux amplitude is $O(1)$.  Generically,
\begin{equation}
 \sup_{0<\delta\le\delta_0}\max_n\|U^n-U(t_n)\|\not\to0\quad\text{as }\Delta t\to0,
\label{eq:UA-unprepared-full}
\end{equation}
whereas
\begin{equation}
 \sup_{0<\delta\le\delta_0}\max_n|u^n-u(t_n)|\le C\Delta t.
\label{eq:UA-unprepared-u}
\end{equation}
\end{proposition}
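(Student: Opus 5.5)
The plan is to work blockwise in the unitary decomposition of Lemma~\ref{lem:block-decomposition}, exactly as in the proof of Proposition~\ref{prop:UA-equilibrium}. I interpret $m_0=q_0+a\sigma u_0$ on each coupled block; in the $\ker G_h^*$ directions the whole flux component plays the role of $m_0$. On a coupled block, write $q_0=-a\sigma u_0+m_0$ in \eqref{eq:UA-fast-coefficient}. The characteristic identity $\delta\lambda_s^2+\lambda_s+a\sigma^2=0$ then gives
\[
 \beta_f=\frac{\sigma m_0+\delta\lambda_s^2u_0}{\lambda_f-\lambda_s}.
\]
Since $|\lambda_f-\lambda_s|\ge c_h/\delta$ and $\lambda_s=O(1)$, this yields $|\beta_f|\le K_h\delta(|m_0|+|u_0|)$. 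This is the $O(\delta)$ macroscopic amplitude, because the scalar entry of $v_f$ is $1$.

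For the flux entry, I use $\lambda_f=-\delta^{-1}+O(1)$ and $\lambda_f-\lambda_s=-\delta^{-1}+O(1)$. These give
\[
 \frac{\lambda_f}{\sigma}\beta_f=\frac{\lambda_f}{\lambda_f-\lambda_s}\,m_0+O(\delta)=m_0+O(\delta).
\]
So the fast flux amplitude is $\Theta(1)$ whenever $m_0\ne0$. The slow coefficient $\beta_s=u_0-\beta_f$ is uniformly bounded, so Theorem~\ref{thm:UA-slow} gives an $O(\Delta t^4)$ error for the slow part, uniformly in $\delta$.

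Next I bound the scalar error \eqref{eq:UA-unprepared-u}. Transverse components of $q_0$ in $\ker G_h^*$ never enter $u$, and the slow part is already $O(\Delta t^4)$. So only the scalar fast term $\bigl(R(\Delta t\lambda_f)^n-e^{t_n\lambda_f}\bigr)\beta_f$ remains, and I split into two cases as in Proposition~\ref{prop:UA-CE3}.
\begin{itemize}[leftmargin=1.8em]
\item If $\delta\le\Delta t$, the amplification factor is bounded by $2$ by A-stability, so the term is $O(\delta)\le O(\Delta t)$.
\item If $\delta\ge\Delta t$, set $x=-\Delta t\lambda_f\asymp\Delta t/\delta$. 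Then \eqref{eq:fast-alltime} of Lemma~\ref{lem:fast-rational} gives the bound $O(\delta)\,x^4=O(\Delta t^4/\delta^3)\le O(\Delta t)$.
\end{itemize}
Taking the maximum over the finitely many blocks proves \eqref{eq:UA-unprepared-u}.

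For the negative statement \eqref{eq:UA-unprepared-full}, I follow the sharpness argument of Proposition~\ref{prop:UA-equilibrium}. Take a coupled block with $m_0\ne0$ and the transition sequence $\delta=\kappa\Delta t$, and look at the first step $n=1$. The flux error is then
\[
 \bigl(R(\Delta t\lambda_f)-e^{\Delta t\lambda_f}\bigr)\tfrac{\lambda_f}{\sigma}\beta_f+O(\Delta t^4).
\]
Since $\Delta t\lambda_f\to-1/\kappa$, this tends to $\bigl(R(-1/\kappa)-e^{-1/\kappa}\bigr)m_0$. A transverse component $q_{0}\in\ker G_h^*$ behaves the same way, with factor $R(-1/\kappa)-e^{-1/\kappa}$ times an amplitude independent of $\delta$.

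This is where "generically" enters, and I expect it to be the main step needing care. The argument requires some $\kappa>0$ with $R(-1/\kappa)\ne e^{-1/\kappa}$. Such a $\kappa$ exists because a rational function cannot coincide with $e^{-x}$ on an interval: $R(-x)\to0$ only algebraically while $e^{-x}$ decays exponentially, or one can simply use $R(z)-e^z\ne0$ from \eqref{eq:small-expansion}. For that $\kappa$, the supremum over $\delta$ of the error is bounded below by a positive constant as $\Delta t\to0$.

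The remaining care is that the slow part and the $O(\delta)$ corrections, which are $O(\Delta t)$ along this sequence, cannot cancel the $\Theta(1)$ limit. That holds because they vanish as $\Delta t\to0$.
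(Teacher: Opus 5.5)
Your proof is correct and follows the paper's argument: you rewrite $\beta_f$ with the extra $\sigma m_0$ term, you split into $\delta\le\Delta t$ and $\delta\ge\Delta t$ via Lemma~\ref{lem:fast-rational} for the scalar bound, and you use the first step along the transition sequence $\delta=\kappa\Delta t$ for the negative statement. One addition goes slightly beyond the paper: you prove that some $\kappa$ with $R(-1/\kappa)\ne e^{-1/\kappa}$ exists, using the nonzero $z^5$ coefficient in \eqref{eq:small-expansion} (note $75C-1>0$ on $[C_-,C_+]$), whereas the paper simply assumes this as its genericity condition.
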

\begin{proof}
Write $q_0=-a\sigma u_0+m_0$.  Relative to the limiting-equilibrium case, the numerator in \eqref{eq:UA-fast-coefficient} gains the nonzero term $\sigma m_0$.  Since $|\lambda_f-\lambda_s|\asymp\delta^{-1}$,
\begin{equation}
 \beta_f=O(\delta),\qquad
 \frac{\lambda_f}{\sigma}\beta_f=O(1).
\end{equation}
Thus the fast scalar component is $O(\delta)$ but the fast flux component has a nonvanishing limit in general.

Take $\delta=\kappa\Delta t$.  At the first step, $\Delta t\lambda_f\to-1/\kappa$.  If $R(-1/\kappa)\ne e^{-1/\kappa}$, the flux component of the numerical-exact difference tends to a nonzero multiple of that amplification defect, proving \eqref{eq:UA-unprepared-full}.  The scalar component is $O(\Delta t)$ along the same sequence.

For the uniform scalar upper bound, split the parameter range.  If $\delta\le\Delta t$, bounded amplification factors multiply an $O(\delta)$ scalar fast component, giving $O(\Delta t)$.  If $\delta\ge\Delta t$, Lemma~\ref{lem:fast-rational} gives
\begin{equation}
 O(\delta)\,O((\Delta t/\delta)^4)
 =O(\Delta t^4/\delta^3)\le O(\Delta t).
\end{equation}
Adding the uniformly fourth-order slow error proves \eqref{eq:UA-unprepared-u}.
\end{proof}

\begin{remark}[Role of the quadratic-decay parameter]
The parameter $C_q$ improves $R(z_f)$ from $O(\delta/\Delta t)$ to $O((\delta/\Delta t)^2)$ when $\delta\ll\Delta t$.  It therefore removes a deeply unresolved fast component more aggressively.  It does not change the maximum-in-time order barriers, whose maximizing transition occurs at $\delta=\kappa\Delta t$ and hence $z_f=O(1)$.
\end{remark}

\subsection{Uniform fourth-order recovery after the initial layer}
The failure of \eqref{eq:UA-definition} for unprepared data concerns a norm that includes the $O(\delta)$ initial layer.  It does not imply low order for the subsequent slow evolution.

\begin{theorem}[Post-layer uniform fourth-order recovery]\label{thm:postlayer-UA}
Fix $0<t_0<T$ and a finite compatible spatial discretization.  For arbitrary initial data bounded independently of $\delta$,
\begin{equation}
 \sup_{0<\delta\le\delta_0}\max_{t_0\le n\Delta t\le T}
 \|R(\Delta tA_{h,\delta})^nU_0-e^{t_nA_{h,\delta}}U_0\|
 \le C_{h,t_0,T}\Delta t^4\|U_0\|.
\label{eq:postlayer-UA}
\end{equation}
\end{theorem}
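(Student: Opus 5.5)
We reduce the claim to scalar rational-approximation estimates, using the orthogonal block decomposition of Lemma~\ref{lem:block-decomposition}. Since the basis change $\operatorname{diag}(V,U)$ is unitary, it suffices to prove \eqref{eq:postlayer-UA} on each block with constants uniform in $0<\delta\le\delta_0$, and then take the maximum over the finitely many blocks.

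\emph{Trivial blocks.} On $\ker G_h$ both maps are the identity, so the error is zero. On $\ker G_h^*$ the error is $R(-\Delta t/\delta)^n-e^{-n\Delta t/\delta}$. With $x=\Delta t/\delta>0$ and $n\Delta t\ge t_0$, estimate \eqref{eq:fast-postlayer} of Lemma~\ref{lem:fast-rational} bounds this by $C_{t_0}\Delta t^4$. The supremum over $x>0$ in that lemma is what makes the bound uniform in $\delta$.

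\emph{Coupled blocks.} By \eqref{eq:AP-block-functional}, the error on $B_{j,\delta}$ splits exactly as
\[
\bigl(R(\Delta t\lambda_{s,j})^n-e^{t_n\lambda_{s,j}}\bigr)\Pi_{s,j}U_0
+\bigl(R(\Delta t\lambda_{f,j})^n-e^{t_n\lambda_{f,j}}\bigr)\Pi_{f,j}U_0 .
\]
The projectors are uniformly bounded for $\delta\le\delta_0$ (eigenvalue lemma), so $\|\Pi_{s,j}U_0\|,\|\Pi_{f,j}U_0\|\le K_h\|U_0\|$. The slow term is handled exactly as in the proof of Theorem~\ref{thm:UA-slow}. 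The arguments $\Delta t\lambda_{s,j}$ lie in a fixed compact subset of the nonpositive axis, so local fourth order, the telescoping identity and A-stability give $C_{h,T}\Delta t^4$ over all $n\Delta t\le T$. Applying Theorem~\ref{thm:UA-slow} to the data $\Pi_sU_0$ also works. For the fast term, set $x=-\Delta t\lambda_{f,j}$, which is positive because $\lambda_{f,j}<0$ for $\delta\le\delta_0$ by the choice $4a\delta_0\sigma_{\max}^2<1$. Since $t_n\lambda_{f,j}=-nx$, the scalar factor is $R(-x)^n-e^{-nx}$ with $n\Delta t\ge t_0$, and \eqref{eq:fast-postlayer} again gives $C_{t_0}\Delta t^4$ uniformly in $x$, hence in $\delta$.

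\emph{Main obstacle.} The subtle point is that the fast projection of unprepared data is not small: its flux part is $O(1)$ by Proposition~\ref{prop:UA-unprepared}. The bound must therefore come entirely from the scalar factor and not from the amplitude. One must check that \eqref{eq:fast-postlayer} is genuinely uniform over all $x>0$, including the transition regime $x=O(1)$ where $\delta\asymp\Delta t$.
\begin{itemize}
\item For $x\le x_0$, the bound $Knx^5e^{-c_1nx}\le K n^{-4}\sup_y y^5e^{-c_1y}$ together with $n\ge t_0/\Delta t$ gives the $\Delta t^4$ bound.
\item For $x\ge x_0$, the bound $\rho^n+e^{-nx_0}$ with $n\ge t_0/\Delta t$ is exponentially small in $1/\Delta t$.
\end{itemize}
Both are already in Lemma~\ref{lem:fast-rational}. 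The only remaining point is that $\Delta t$ must be small enough that the exponential tail is dominated by $\Delta t^4$, which is why the estimate is stated for sufficiently small $\Delta t$; for larger $\Delta t$ the bounded error is absorbed into $C_{h,t_0,T}$. Summing the slow and fast contributions and maximizing over blocks proves \eqref{eq:postlayer-UA}.
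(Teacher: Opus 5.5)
Your proof is correct and follows the paper's own argument essentially step for step: the unitary block decomposition, exact reproduction on $\ker G_h$, and on coupled blocks the slow/fast spectral split with uniformly bounded projectors, where the slow part uses the telescoping bound of Theorem~\ref{thm:UA-slow} and the fast part uses \eqref{eq:fast-postlayer} of Lemma~\ref{lem:fast-rational}; the transverse $-1/\delta$ block is handled by that same estimate. Your extra remarks, that the fast bound comes from the scalar factor rather than the $O(1)$ fast amplitude and that larger $\Delta t$ can be absorbed into the constant, are harmless refinements of that same route.
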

\begin{proof}
Apply the orthogonal block decomposition of Lemma~\ref{lem:block-decomposition}.  The zero blocks are reproduced exactly.  On a coupled block, decompose the initial value with the uniformly bounded projectors:
\begin{equation}
 U_0=\Pi_s(\delta)U_0+\Pi_f(\delta)U_0.
\end{equation}
The slow eigenvalues remain in a fixed compact subset of the closed left half-plane, so the telescoping argument used in Theorem~\ref{thm:UA-slow} gives a uniform $O(\Delta t^4)$ error on the slow component.  On the fast component, put $x=-\Delta t\lambda_f>0$.  Lemma~\ref{lem:fast-rational} gives
\begin{equation}
 |R(-x)^n-e^{-nx}|\le C_{t_0}\Delta t^4
\end{equation}
whenever $n\Delta t\ge t_0$.  Multiplication by the uniformly bounded fast projector preserves this order.  A transverse block has eigenvalue $-1/\delta$ and is covered by the same lemma with $x=\Delta t/\delta$.  Taking the maximum over the finite number of blocks proves \eqref{eq:postlayer-UA}.
\end{proof}

The resulting picture is sharper than either an unconditional positive or negative statement.  Full-state fourth-order uniform accuracy through $t=0$ requires sufficiently accurate slow-manifold preparation and is false for arbitrary data.  For the linear model, \eqref{eq:CE3-operator} provides a concrete restoration mechanism.  Without preparation, L-stability still yields full-state uniform fourth order on every fixed positive-time interval after the fast layer.  These conclusions are consistent with the additional preparation and order conditions found in uniformly accurate relaxation schemes \cite{CaflischJinRusso1997,BoscarinoRusso2009,HuShuBDF2021,MaHuang2025}.

\section{Implicit algebraic solves and computational structure}
\subsection{Linear stage matrices}
For $\LL_h(\UU)=A_h\UU$ and exact closure \eqref{eq:A2closure}, the two stages are
\begin{equation}
 \left[I-\frac{\Dt}{4}A_h+\frac{\Dt^2}{48}A_h^2\right]\UU^\star
 =\left[I+\frac{\Dt}{4}A_h+\frac{\Dt^2}{48}A_h^2\right]\UU^n,
\label{eq:linear-stage1}
\end{equation}
\begin{align}
 \left[I-a_2\Dt A_h+\frac32C\Dt^2A_h^2\right]\UU^{n+1}
={}&\left[I+a_0\Dt A_h+C\Dt^2A_h^2\right]\UU^n\nonumber\\
&+\left[a_1\Dt A_h-C\Dt^2A_h^2\right]\UU^\star.
\label{eq:linear-stage2}
\end{align}
The stage vectors are found sequentially. By contrast, a direct implementation of two-stage Gauss collocation solves a coupled $2N$-unknown stage system. Block diagonalization or specialized transformations can reduce that cost, so sequentiality is an implementation advantage rather than a universal complexity theorem.

The reference implementation factors the two matrices in \eqref{eq:linear-stage1}--\eqref{eq:linear-stage2}. Large-scale codes should apply $A_hv$ and $A_h(A_hv)$ matrix-free. For transport--relaxation systems, a natural preconditioner approximates
\begin{equation}
 P_1=I-\frac{\Dt}{4}\widetilde A_h+\frac{\Dt^2}{48}\widetilde A_h^2,
\qquad
 P_2=I-a_2\Dt\widetilde A_h+\frac32C\Dt^2\widetilde A_h^2.
\end{equation}
Eliminating the flux variable yields a diffusion-like Schur complement for $u$, while the local $1/\delta$ block can be inverted directly.

\subsection{Nonlinear residuals and Newton solves}
For a nonlinear residual, define
\begin{align}
 \RR_1(\bm Z)={}&\bm Z-\UU^n-\frac{\Dt}{4}\bigl[\LL_h(\UU^n)+\LL_h(\bm Z)\bigr]
 -\frac{\Dt^2}{48}\bigl[\GG_h(\UU^n)-\GG_h(\bm Z)\bigr],\\
 \RR_2(\bm W)={}&\bm W-\UU^n-\Dt\bigl[a_0\LL_h(\UU^n)+a_1\LL_h(\UU^\star)+a_2\LL_h(\bm W)\bigr]\nonumber\\
 &-C\Dt^2\bigl[\GG_h(\UU^n)-\GG_h(\UU^\star)-\tfrac32\GG_h(\bm W)\bigr].
\end{align}
Damped Newton iteration solves $\RR_s=0$. A matrix-free production implementation can use
\begin{equation}
 J_{\RR}(\bm Z)v\approx\frac{\RR(\bm Z+\eta v)-\RR(\bm Z)}{\eta}
\end{equation}
inside FGMRES \cite{KnollKeyes2004,SaadSchultz1986,SaadFGMRES1993}. The nonlinear Jin--Xin reference experiment instead uses analytic sparse Jacobians so that the observed temporal order is not contaminated by inaccurate finite-difference products.

The exact stage Jacobians have the generic form
\begin{equation}
 J_1=I-\frac{\Dt}{4}D\LL_h+\frac{\Dt^2}{48}D\GG_h,
 \qquad
 J_2=I-a_2\Dt D\LL_h+\frac32C\Dt^2D\GG_h.
\label{eq:stage-jacobians}
\end{equation}
Formula \eqref{eq:stage-jacobians} shows why the same physical blocks should appear in the preconditioner for both stages. The second-derivative term is not merely a high-order correction: in the strongly stiff regime it can be comparable to or larger than the first-derivative term. Dropping it completely may leave a preconditioner whose quality degrades as $\Dt/\delta$ grows.

For the nonlinear reference calculation, we use the exact discrete trajectory derivative of \eqref{eq:nonlinear-jx}.  With a periodic derivative $D_h$,
\begin{equation}
 \LL_h\begin{bmatrix}u\\v\end{bmatrix}
 =\begin{bmatrix}-D_hv\\-[aD_hu+v-f(u)]/\delta\end{bmatrix}.
\end{equation}
Writing $(L_u,L_v)^T=\LL_h(U)$, the exact trajectory derivative is
\begin{equation}
 \GG_h(U)=
 \begin{bmatrix}
 -D_hL_v\\
 -[aD_hL_u+L_v-f'(u)L_u]/\delta
 \end{bmatrix}.
\label{eq:nonlinear-G}
\end{equation}
This formula is $D\mathcal L_h(U)\mathcal L_h(U)$ exactly and is used by the code to solve both stages with analytic-Jacobian Newton iterations.  It verifies nonlinear temporal order and nonlinear solver behavior; it is not presented as a nonlinear face-based ADER closure test.

\begin{algorithm}[tbp]
\caption{One complete implicit two-stage fourth-order step}
\label{alg:step}
\begin{algorithmic}[1]
\State Evaluate $\LL_h(\UU^n)$ and $\GG_h(\UU^n)$.
\State Initialize $\bm Z^{(0)}=\UU^n$.
\For{$k=0,1,\dots$ until $\RR_1$ converges}
  \State Evaluate the stage residual and derivative data at $\bm Z^{(k)}$.
  \State Solve the Newton correction, apply damping, and update $\bm Z^{(k+1)}$.
\EndFor
\State Set $\UU^\star=\bm Z^{(k+1)}$ and initialize $\bm W^{(0)}=2\UU^\star-\UU^n$.
\For{$k=0,1,\dots$ until $\RR_2$ converges}
  \State Evaluate the stage residual and derivative data at $\bm W^{(k)}$.
  \State Solve the Newton correction, apply damping, and update $\bm W^{(k+1)}$.
\EndFor
\State Set $\UU^{n+1}=\bm W^{(k+1)}$.
\end{algorithmic}
\end{algorithm}

The two-stage count refers to unknown stage vectors rather than equal floating-point cost: evaluating $\GG_h$ can be more expensive than evaluating $\LL_h$.  The structural advantage is that the first $N$-unknown system is completed before the second is formed, while standard two-stage Gauss collocation naturally couples two $N$-vectors.  The method reuses $\LL_h(\UU^n)$ and $\GG_h(\UU^n)$ in both stages; constant linear stage matrices are factorized once, and $2\UU^\star-\UU^n$ provides an effective initial guess for the second nonlinear solve.

\section{One-dimensional numerical experiments}
Unless stated otherwise, the spatial grid is fixed and the reference is the exact semi-discrete solution, so that the reported errors isolate the temporal method.  Double precision is used except for the Prothero--Robinson experiment, which uses high-precision arithmetic as stated below.

\subsection{Fourth-order temporal convergence}
The smooth Cattaneo problem uses $a=0.7$, $\delta=2\times10^{-2}$, $N=96$, $t_f=0.2$, and a fourth-order periodic centered derivative. The initial condition is
\begin{equation}
 u_0=\sin x+0.2\cos2x,\qquad q_0=-a(\cos x-0.4\sin2x).
\end{equation}
Table~\ref{tab:1d-order} and Figure~\ref{fig:1d-order} show fourth-order convergence.
\begin{table}[tbp]
\centering
\caption{One-dimensional temporal convergence.}
\label{tab:1d-order}
\begin{tabular}{ccc}
\toprule
$\Dt$ & $L^2$ error & observed order\\
\midrule
0.040 & $1.3881\times10^{-8}$ & --\\
0.020 & $8.3780\times10^{-10}$ & 4.050\\
0.010 & $5.1872\times10^{-11}$ & 4.014\\
0.005 & $3.2341\times10^{-12}$ & 4.004\\
\bottomrule
\end{tabular}
\end{table}
\begin{figure}[tbp]
\centering
\includegraphics[width=.58\linewidth]{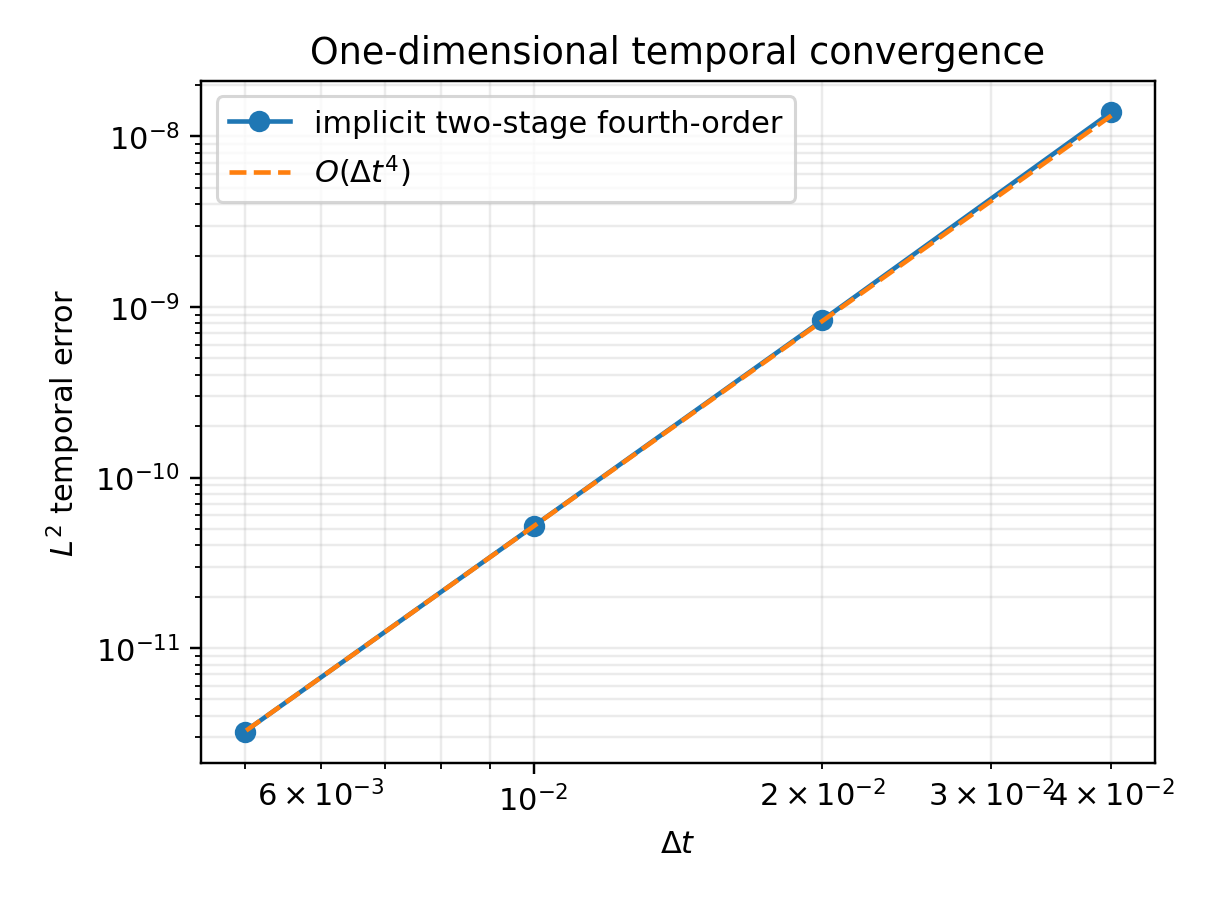}
\caption{One-dimensional fourth-order temporal convergence.}
\label{fig:1d-order}
\end{figure}

\subsection{Finite-volume ADER trajectory-derivative coupling}
The preceding test isolates the time formula with a matrix residual.  We next use the face-based provider of Algorithm~\ref{alg:ader-provider} to close the trajectory derivative of an actual semi-discrete finite-volume residual.  A smooth periodic linear relaxation problem is discretized by a fifth-order optimal linear finite-volume reconstruction and a Rusanov flux on $48$ cells.  The matrix $\widetilde G_h$ is assembled from interface flux-time derivatives according to \eqref{eq:linear-ader-flux}, independently of the residual matrix $L_h$.  The stage solves use the pair $(L_h,\widetilde G_h)$ directly.  The audit gives
\[
 \|\widetilde G_h-L_h^2\|_\infty=1.23\times10^{-11},
\]
and Table~\ref{tab:ader-fv} and Figure~\ref{fig:ader-fv} show fourth-order temporal convergence to the exact semi-discrete solution.  This test verifies the linear implementation claim: the face-based CK/ADER derivative provider supplies the discrete trajectory derivative required by the two-derivative stages without performing an additional time update.
\begin{table}[tbp]
\centering
\caption{Finite-volume time convergence with the independently assembled ADER face-derivative operator.}
\label{tab:ader-fv}
\begin{tabular}{ccc}\toprule
$\Delta t$ & $L^2$ error & order\\\midrule
0.025000 & $1.0573\times10^{-7}$ & --\\
0.012500 & $6.5248\times10^{-9}$ & 4.018\\
0.006250 & $4.0638\times10^{-10}$ & 4.005\\
0.003125 & $2.5378\times10^{-11}$ & 4.001\\\bottomrule
\end{tabular}
\end{table}
\begin{figure}[tbp]
\centering
\includegraphics[width=.58\linewidth]{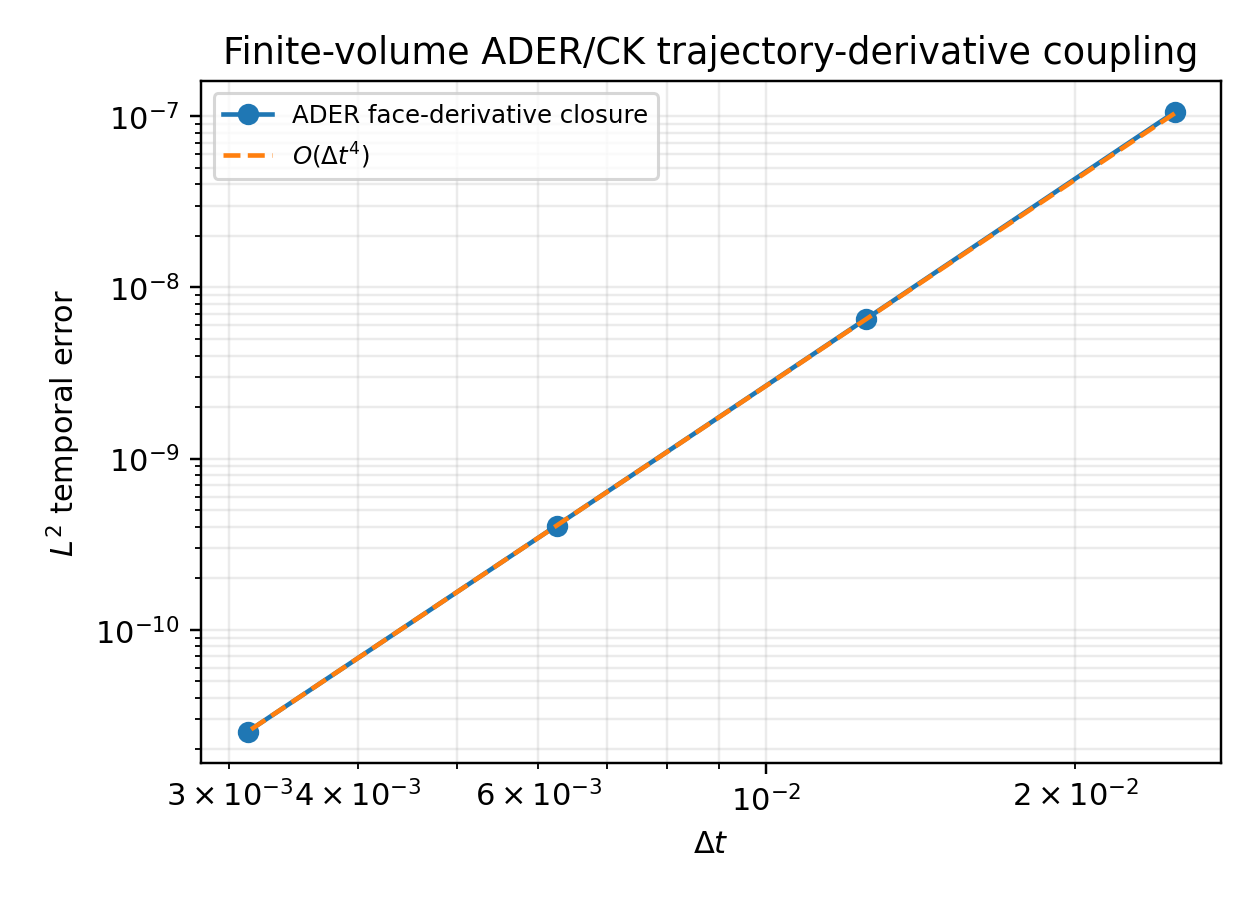}
\caption{Fourth-order convergence when the stage equations use the ADER/CK face flux-time-derivative operator $\widetilde G_h$ rather than a preformed square of the residual matrix.}
\label{fig:ader-fv}
\end{figure}

\subsection{Scalar stiff decay and the enhanced parameter}
Figure~\ref{fig:stiff-decay} compares the negative-real-axis damping of the proposed method, Gauss4, Radau3, and Crank--Nicolson. Gauss4 and Crank--Nicolson approach unit magnitude. The proposed method and Radau3 tend to zero, but only the proposed method is fourth order with two stages. The $C_q$ experiment in Figure~\ref{fig:parameter-tradeoff} additionally shows that stronger asymptotic decay can be obtained without leaving the L-stable family.
\begin{figure}[tbp]
\centering
\includegraphics[width=.64\linewidth]{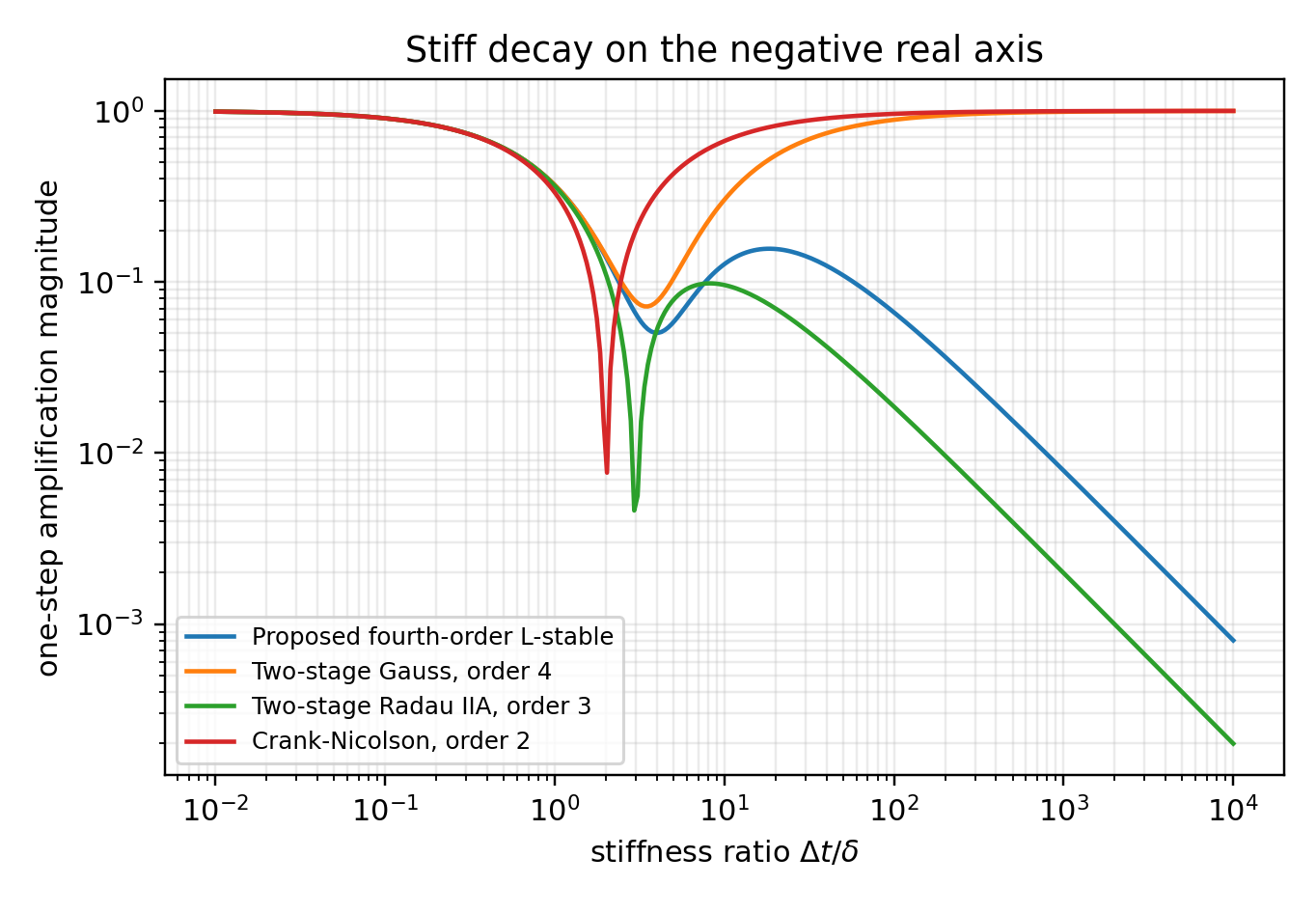}
\caption{Stiff decay on the negative real axis.}
\label{fig:stiff-decay}
\end{figure}

\subsection{Prothero--Robinson stiff nonautonomous test}
We use \eqref{eq:PR-general} with $\phi(t)=e^t$ and solve the two scalar stages exactly.  Eighty-digit arithmetic removes the roundoff plateau that obscured the strongly stiff result in double precision.  Table~\ref{tab:PR} and Figure~\ref{fig:PR} confirm the analysis of Proposition~\ref{prop:PR-defect}: $\lambda=-10$ is already in the classical fourth-order regime, whereas $\lambda=-10^6$ follows an essentially exact third-order slope because $|\lambda|\Delta t\gg1$ on all displayed grids.  The intermediate case $\lambda=-10^3$ transitions from third toward fourth order as the step is refined.
\begin{table}[tbp]
\centering
\caption{High-precision Prothero--Robinson final-time errors.}
\label{tab:PR}
\begin{tabular}{ccccc}
\toprule
$\lambda$ & $\Dt=0.25$ & $0.125$ & $0.0625$ & finest observed order\\
\midrule
$-10$ & $1.5490\times10^{-7}$ & $9.4228\times10^{-9}$ & $5.7123\times10^{-10}$ & 4.008\\
$-10^3$ & $1.3523\times10^{-10}$ & $1.6239\times10^{-11}$ & $1.8128\times10^{-12}$ & 3.581\\
$-10^6$ & $1.4667\times10^{-16}$ & $1.9059\times10^{-17}$ & $2.4288\times10^{-18}$ & 2.999\\
\bottomrule
\end{tabular}
\end{table}
\begin{figure}[tbp]
\centering
\includegraphics[width=.64\linewidth]{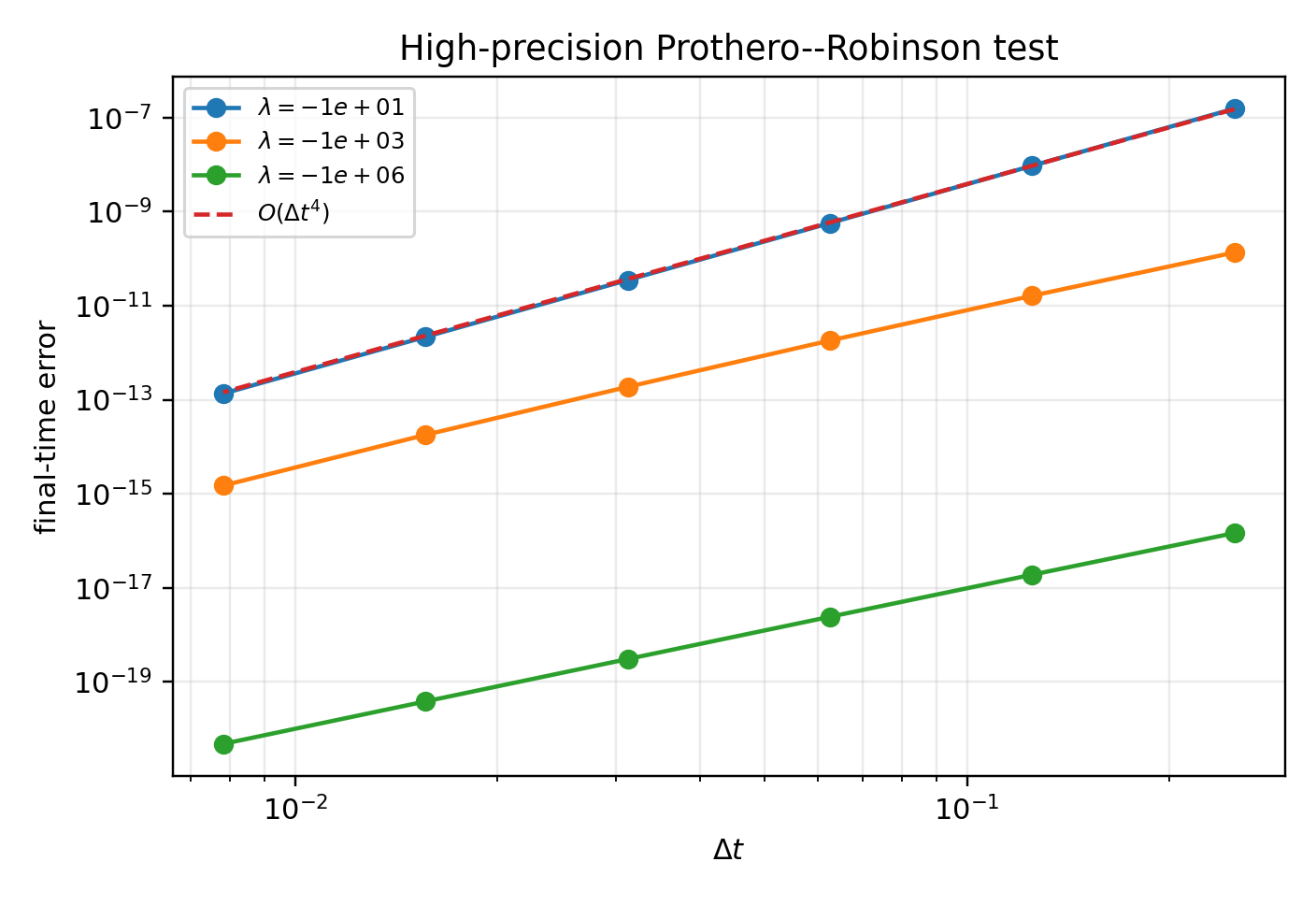}
\caption{High-precision Prothero--Robinson test.  Extreme nonautonomous stiffness exhibits the analytically predicted third-order window, while fixed moderate stiffness recovers fourth order.}
\label{fig:PR}
\end{figure}

\subsection{Uniform accuracy, constructive preparation, and post-layer recovery}
The mode-level experiment corresponding to Section~6 uses $a=0.7$, wave number $\sigma=4$, $t_f=0.1$, and takes the supremum over $10^{-8}\le\delta\le10^{-2}$ and every grid time.  Four preparations are tested: the exact discrete slow eigenvector, the third-order Chapman--Enskog initialization \eqref{eq:CE3}, the limiting equilibrium relation $q_0=-a\sigma u_0$, and the unprepared state $q_0=0$.  Figure~\ref{fig:uniform} and Table~\ref{tab:uniform} confirm the theory.  Exact slow data and the constructive Chapman--Enskog preparation approach uniform fourth order in the complete state.  Limiting-equilibrium data are approximately first order in the complete state and second order in the macroscopic variable.  Unprepared data have no positive uniform order in the complete state, although the macroscopic variable is first order.
\begin{figure}[tbp]
\centering
\includegraphics[width=.93\linewidth]{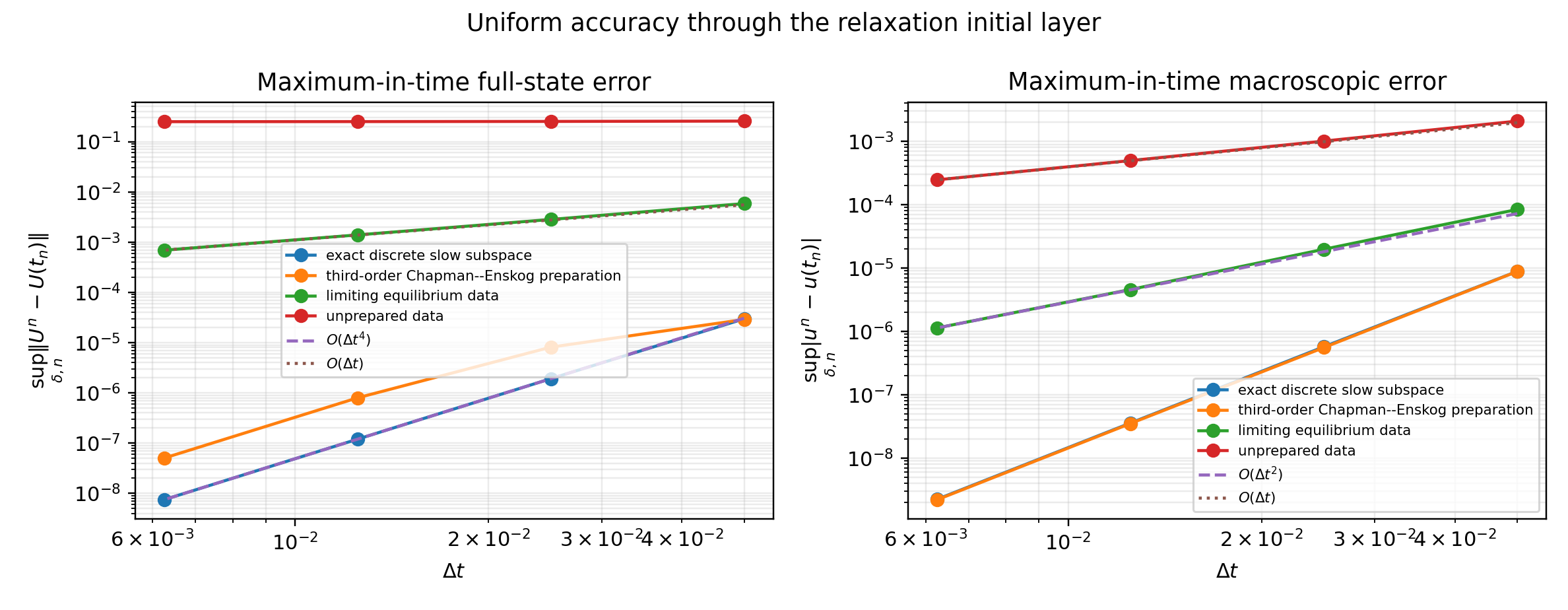}
\caption{Maximum-in-time uniform-accuracy scan over $10^{-8}\le\delta\le10^{-2}$.  The third-order Chapman--Enskog preparation is a practical approximation to the exact discrete slow subspace and asymptotically restores full-state fourth order.}
\label{fig:uniform}
\end{figure}
\begin{table}[tbp]
\centering
\caption{Finest observed orders in the uniform-accuracy scan.}
\label{tab:uniform}
\small
\begin{tabular}{lcc}
\toprule
initial preparation & full-state order & macroscopic order\\
\midrule
exact discrete slow subspace & 4.000 & 4.000\\
third-order Chapman--Enskog & 3.983 & 4.000\\
limiting equilibrium $q_0=-a\sigma u_0$ & 1.011 & 2.018\\
unprepared $q_0=0$ & 0.006 & 1.011\\
\bottomrule
\end{tabular}
\end{table}
The relaxation values producing the largest error scale proportionally to $\Delta t$ for the latter two preparations, identifying the transition regime $\delta=O(\Delta t)$ rather than the deep diffusion limit as the bottleneck.

A second scan uses limiting-equilibrium and unprepared data but measures the complete-state error only on $[t_0,T]=[0.1,0.2]$.  Figure~\ref{fig:postlayer} verifies Theorem~\ref{thm:postlayer-UA}: the finest observed orders are $3.999$ and $4.018$, respectively.
\begin{figure}[tbp]
\centering
\includegraphics[width=.66\linewidth]{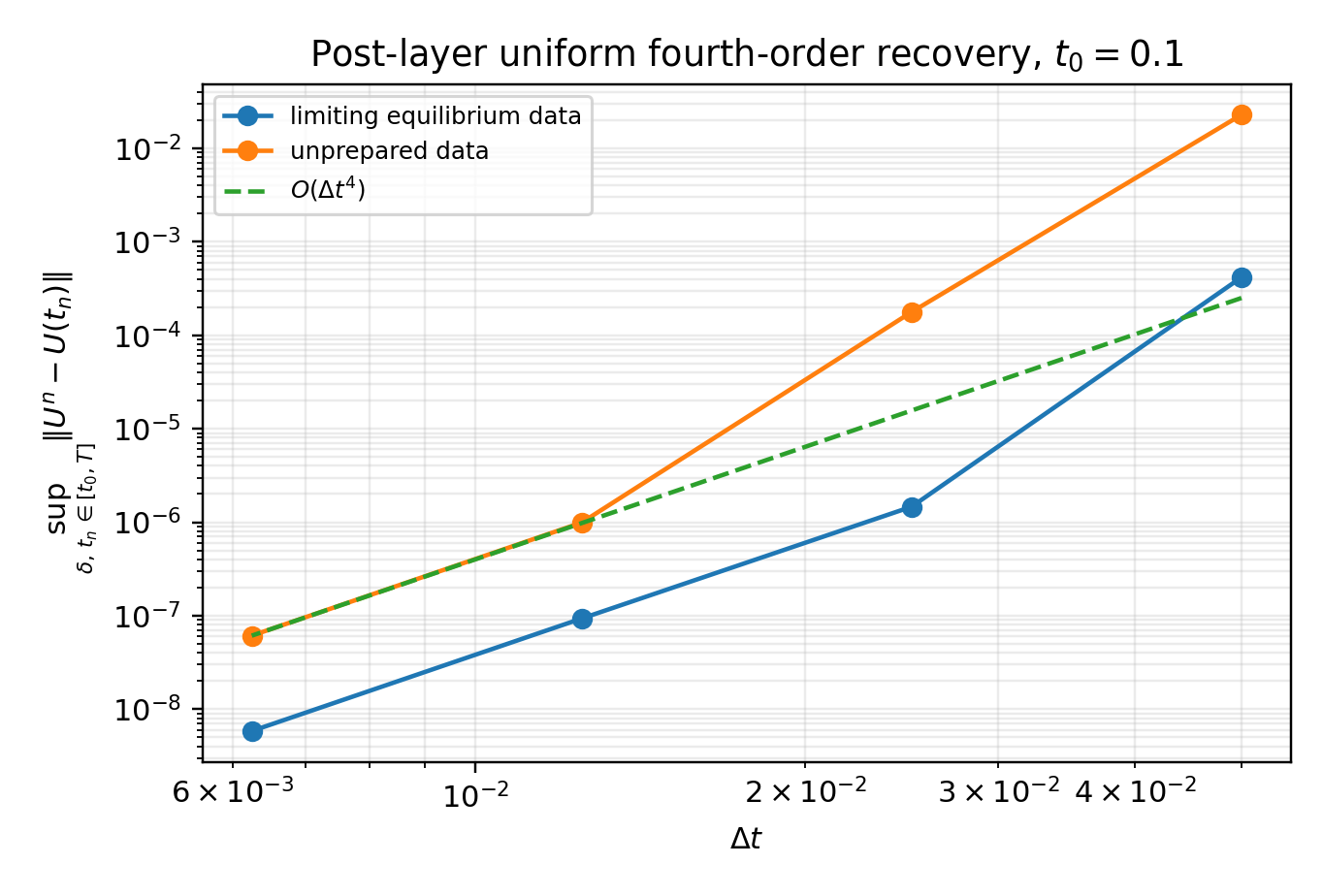}
\caption{Uniform fourth-order recovery after a fixed positive time for limiting-equilibrium and unprepared data.}
\label{fig:postlayer}
\end{figure}
Thus the failure of maximum-in-time uniform convergence is confined to the unresolved initial layer and does not destroy fourth-order slow-time accuracy.
\FloatBarrier

\paragraph{Multi-mode PDE verification.}
To check that the preparation-dependent orders are not an artifact of a single Fourier block, we repeat the maximum-in-time scan for a smooth three-mode field with wave numbers $1$, $2$, and $4$ and amplitudes $1$, $0.2$, and $0.1$.  The errors are combined over all modes, and the supremum is taken over $10^{-7}\le\delta\le10^{-2}$.  Figure~\ref{fig:multimode-UA} reproduces the theoretical classification: exact slow data remain fourth order; the Chapman--Enskog data approach fourth order; limiting-equilibrium data are first order in the complete state and second order in the macroscopic variable; and unprepared data have no complete-state uniform convergence while their macroscopic component is first order.  The maximizing values again lie in the transition region $\delta=O(\Delta t)$.
\begin{figure}[tbp]
\centering
\includegraphics[width=.88\linewidth]{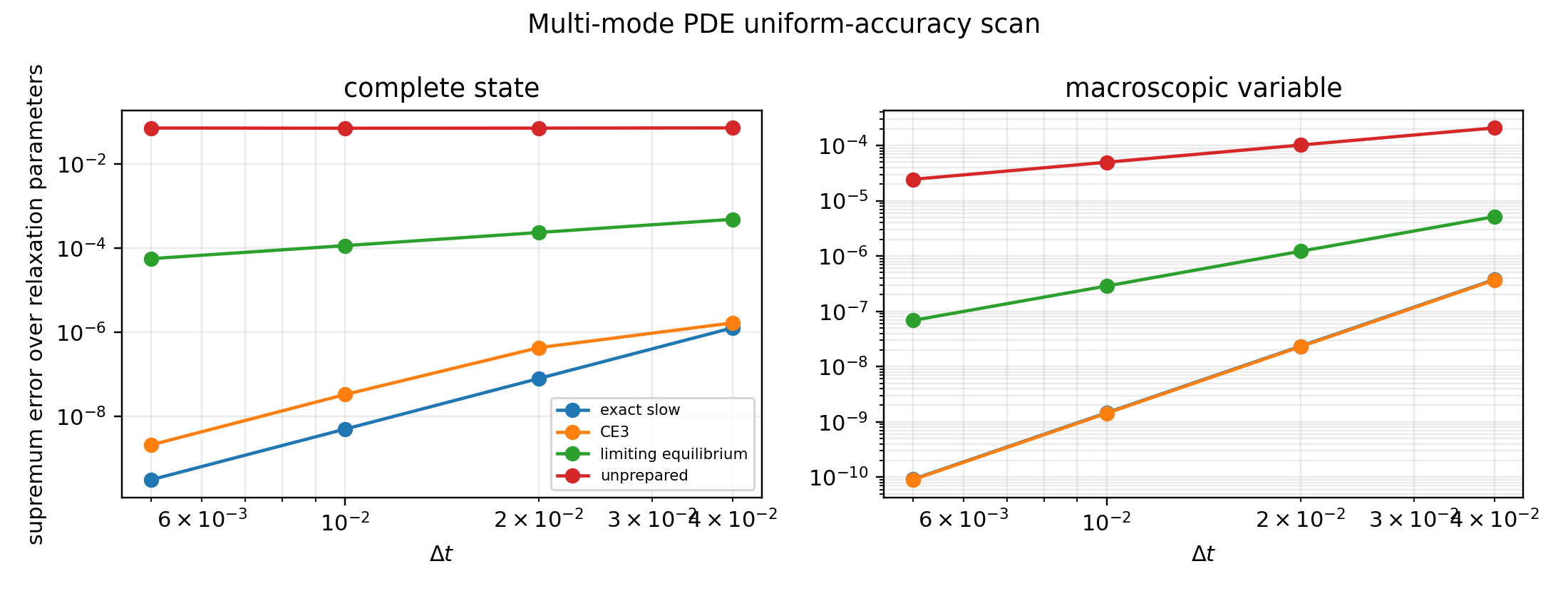}
\caption{Preparation-dependent uniform accuracy for a smooth three-mode PDE state.}
\label{fig:multimode-UA}
\end{figure}

\subsection{Cattaneo heat pulse across regimes}
The periodic heat-pulse test uses $N=256$, $a=1$, $u_0=\exp(-30x^2)$, $q_0=0$, $\Dt=0.01$, and $t_f=0.15$. Figure~\ref{fig:cattaneo-pulse} shows finite-speed thermal-wave behavior, a transition regime, and a diffusion-like profile. The same time formula and parameter are used without regime-dependent switching.
\begin{figure}[tbp]
\centering
\includegraphics[width=.97\linewidth]{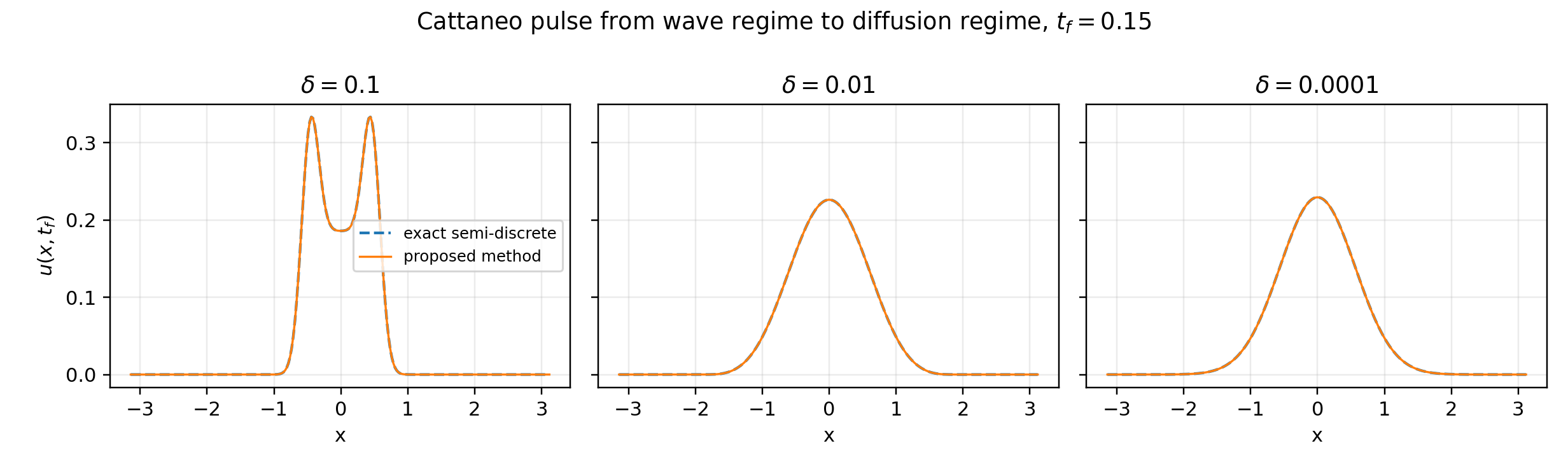}
\caption{Cattaneo pulse for $\delta=10^{-1},10^{-2},10^{-4}$. Solid curves are the proposed method and dashed curves are exact semi-discrete references.}
\label{fig:cattaneo-pulse}
\end{figure}
\FloatBarrier

\subsection{Nonperiodic Cattaneo mode with homogeneous boundary flux}
To verify that the temporal conclusions are not an artifact of periodic Fourier diagonalization, we use $u$ at $N=80$ cell centers and $q$ at the $N-1$ interior faces of $[0,1]$.  The boundary fluxes are fixed to zero and the staggered operators satisfy $D_h=-G_h^T$ exactly.  Define $\sigma_h=2\sin(\pi/(2N))/h$ and let $\lambda_s$ solve $\delta\lambda_s^2+\lambda_s+a\sigma_h^2=0$.  The first discrete Neumann mode is initialized by
\[
 u_i(0)=\cos\!\left(\pi\frac{i+1/2}{N}\right),\qquad
 q_j(0)=-\frac{\lambda_s}{\sigma_h}\sin\!\left(\pi\frac{j}{N}\right).
\]
Its semi-discrete solution is exactly $e^{\lambda_st}$ times this eigenvector.  With $a=0.6$, $\delta=10^{-2}$, and $t_f=0.1$, the results are reported in Table~\ref{tab:nonperiodic} and Figure~\ref{fig:nonperiodic}.
\begin{table}[tbp]
\centering
\caption{Nonperiodic Cattaneo temporal convergence.}
\label{tab:nonperiodic}
\begin{tabular}{ccc}
\toprule
$\Dt$ & combined $L^2$ error & order\\
\midrule
0.025000 & $3.4311\times10^{-8}$ & --\\
0.012500 & $2.1439\times10^{-9}$ & 4.000\\
0.006250 & $1.3398\times10^{-10}$ & 4.000\\
0.003125 & $8.3705\times10^{-12}$ & 4.001\\
\bottomrule
\end{tabular}
\end{table}
\begin{figure}[tbp]
\centering
\includegraphics[width=.88\linewidth]{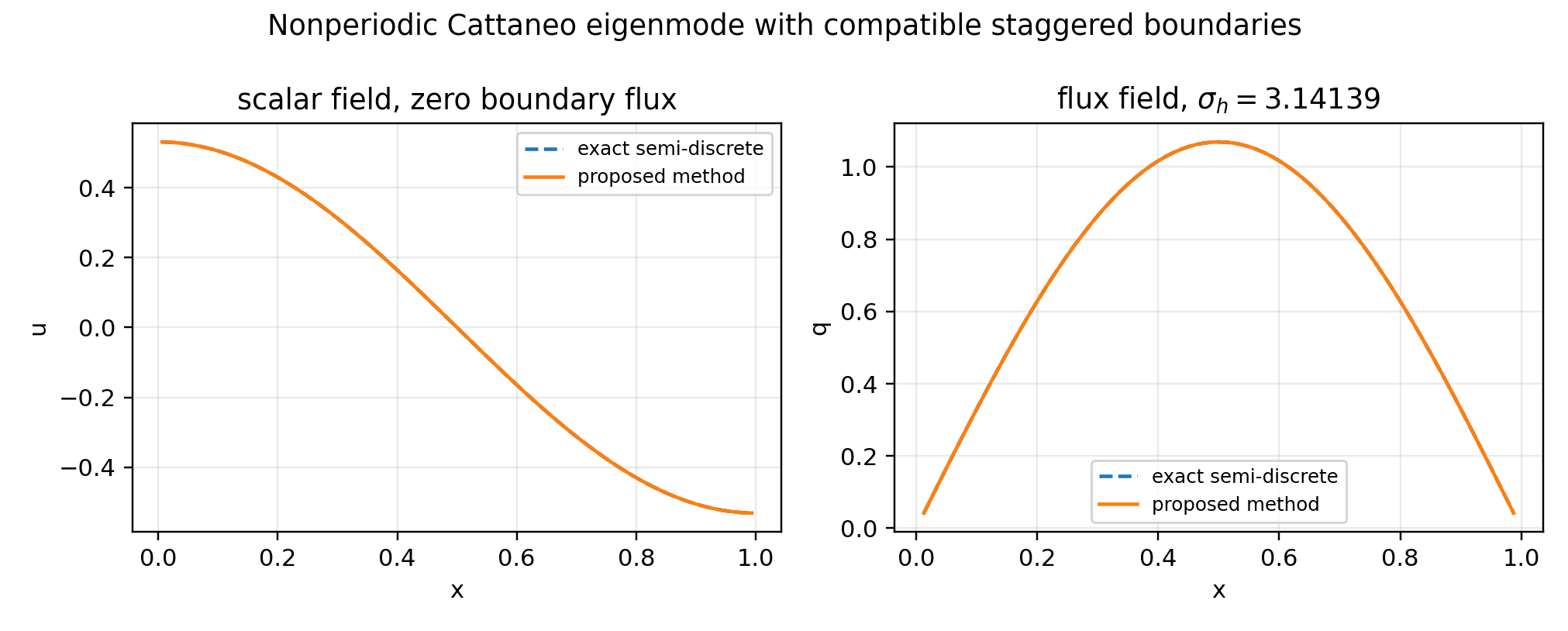}
\caption{Nonperiodic Cattaneo eigenmode with homogeneous boundary fluxes and a compatible staggered divergence--gradient pair.}
\label{fig:nonperiodic}
\end{figure}

\FloatBarrier

\subsection{Classical Goldstein--Taylor diffusion limit}
A discontinuous benchmark is computed on $[-1,1]$ with $N=160$, $a=1/2$, $t_f=0.04$, and $\Dt=2\times10^{-4}$. The initial scalar equals two for $x<0$ and one for $x>0$, with zero initial flux. A paired first-difference operator is used because its limiting diffusion operator has the correct conservative structure. Figure~\ref{fig:GT} and Table~\ref{tab:GT} show convergence to the discrete heat equation as $\delta\to0$.
\begin{table}[tbp]
\centering
\caption{Goldstein--Taylor error relative to the discrete diffusion limit.}
\label{tab:GT}
\begin{tabular}{cc}
\toprule
$\delta$ & $L^2$ error\\
\midrule
$5\times10^{-3}$ & $9.803\times10^{-3}$\\
$5\times10^{-4}$ & $8.765\times10^{-4}$\\
$5\times10^{-5}$ & $8.681\times10^{-5}$\\
\bottomrule
\end{tabular}
\end{table}
\begin{figure}[tbp]
\centering
\includegraphics[width=.65\linewidth]{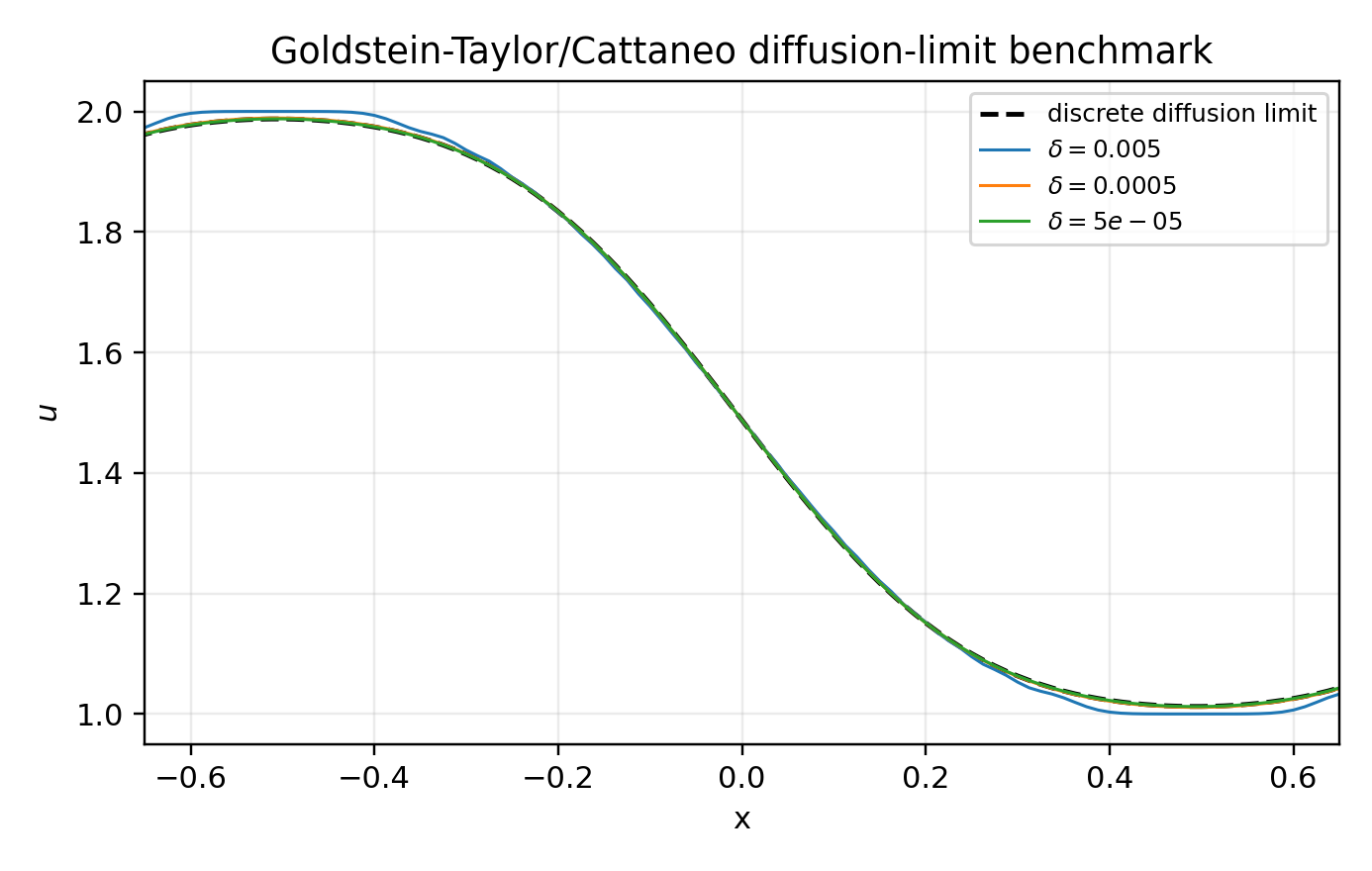}
\caption{Goldstein--Taylor/Cattaneo diffusion-limit benchmark.}
\label{fig:GT}
\end{figure}

\subsection{Nonlinear diffusive Jin--Xin test}
For \eqref{eq:nonlinear-jx}, take $f(u)=u^2/2$, $a=0.4$, $N=24$, $t_f=0.08$, and
\begin{equation}
 u_0=0.4+0.25\sin x,\qquad v_0=f(u_0)-aD_hu_0.
\end{equation}
The reference uses 128 steps. Both implicit stages use the exact Jacobians associated with \eqref{eq:nonlinear-G}. Table~\ref{tab:nonlinear} and Figure~\ref{fig:nonlinear} demonstrate fourth-order temporal convergence for two relaxation values. The average Newton iteration count is close to two for the half stage and one to two for the full stage.
\begin{table}[tbp]
\centering
\caption{Nonlinear diffusive Jin--Xin convergence.}
\label{tab:nonlinear}
\begin{tabular}{ccccc}
\toprule
$\delta$ & $\Dt$ & $L^2$ error & order & avg. Newton $(s_1,s_2)$\\
\midrule
0.02 & 0.0200 & $1.0375\times10^{-8}$ & -- & $(2.00,1.75)$\\
 & 0.0100 & $6.4294\times10^{-10}$ & 4.01 & $(2.00,1.00)$\\
 & 0.0050 & $4.0078\times10^{-11}$ & 4.00 & $(2.00,1.00)$\\
 & 0.0025 & $2.4940\times10^{-12}$ & 4.01 & $(2.00,1.00)$\\[2pt]
0.01 & 0.0200 & $3.1825\times10^{-9}$ & -- & $(2.00,2.00)$\\
 & 0.0100 & $1.9278\times10^{-10}$ & 4.05 & $(2.00,1.00)$\\
 & 0.0050 & $1.1932\times10^{-11}$ & 4.01 & $(2.00,1.00)$\\
 & 0.0025 & $7.4116\times10^{-13}$ & 4.01 & $(2.00,1.00)$\\
\bottomrule
\end{tabular}
\end{table}
\begin{figure}[tbp]
\centering
\includegraphics[width=.83\linewidth]{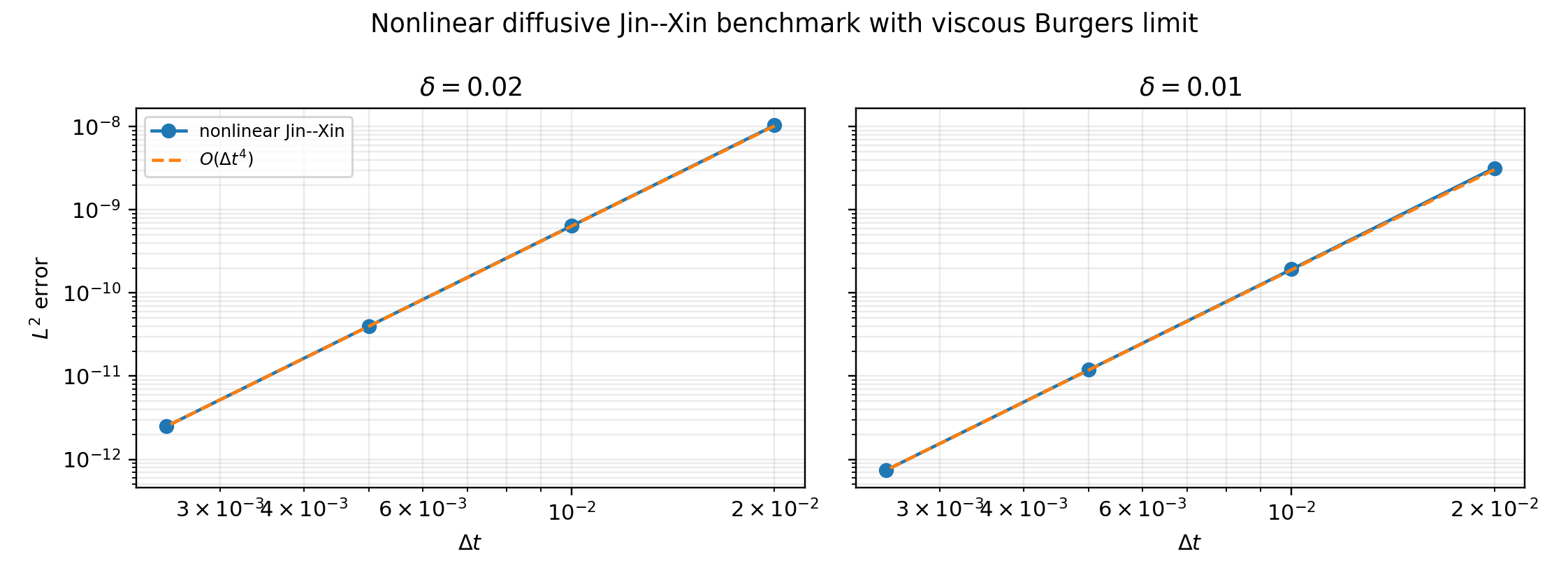}
\caption{Fourth-order convergence for the nonlinear diffusive Jin--Xin problem.}
\label{fig:nonlinear}
\end{figure}

\FloatBarrier
\subsection{Nonlinear convergence toward the viscous Burgers limit}
Although the rigorous AP theorem is linear, the nonlinear implementation should at least reproduce the correct equilibrium limit numerically.  We therefore fix the spatial grid and $\Delta t=1.25\times10^{-3}$, use well-prepared data $v_0=f(u_0)-aD_hu_0$, and compare the Jin--Xin scalar component with a high-accuracy solution of the same semi-discrete viscous Burgers equation.  Table~\ref{tab:nonlinear-ap} and Figure~\ref{fig:nonlinear-ap} show an asymptotic first-order dependence on $\delta$ and nearly parameter-independent Newton counts.  This is numerical AP evidence only; it is not used as a substitute for a nonlinear operator theorem.
\begin{table}[H]
\centering
\caption{Nonlinear Jin--Xin convergence to the semi-discrete viscous Burgers limit.}
\label{tab:nonlinear-ap}
\begin{tabular}{ccccc}
\toprule
$\delta$ & $L^2$ error in $u$ & $\delta$-order & Newton stage 1 & Newton stage 2\\
\midrule
$1.0000\times10^{-2}$ & $2.3579\times10^{-5}$ & -- & 2.00 & 1.00\\
$2.5000\times10^{-3}$ & $6.9188\times10^{-6}$ & 0.884 & 2.00 & 1.00\\
$6.2500\times10^{-4}$ & $1.7921\times10^{-6}$ & 0.974 & 2.00 & 1.00\\
$1.5625\times10^{-4}$ & $4.5188\times10^{-7}$ & 0.994 & 2.00 & 1.00\\
\bottomrule
\end{tabular}
\end{table}
\begin{figure}[H]
\centering
\includegraphics[width=.92\linewidth]{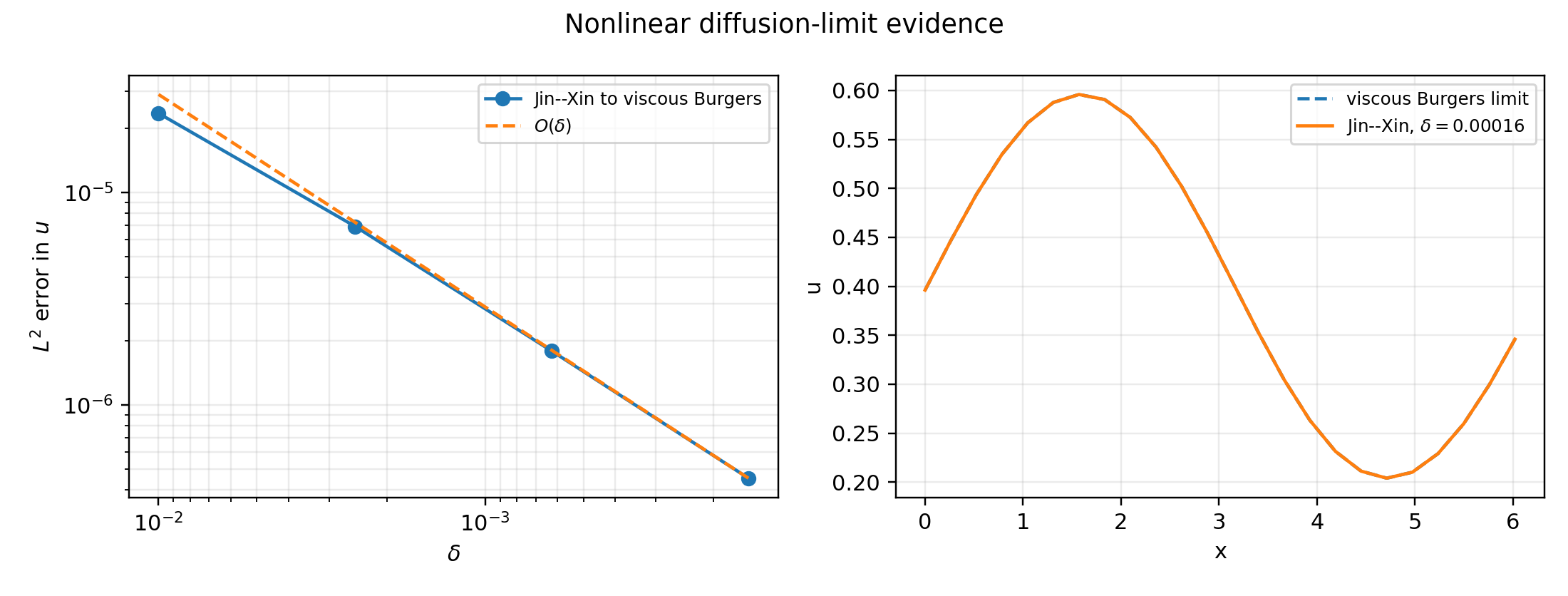}
\caption{Nonlinear diffusion-limit evidence: the Jin--Xin scalar approaches the same-grid viscous Burgers solution at approximately $O(\delta)$.}
\label{fig:nonlinear-ap}
\end{figure}
\FloatBarrier

\section{Two-dimensional numerical experiments}
\subsection{Two-dimensional temporal convergence}
A periodic $22\times22$ mode is evolved with $a=0.5$, $\delta=10^{-2}$, and $t_f=0.1$. The error combines the scalar and two flux components. Table~\ref{tab:2d-order} confirms fourth order.
\begin{table}[tbp]
\centering
\caption{Two-dimensional temporal convergence.}
\label{tab:2d-order}
\begin{tabular}{ccc}
\toprule
$\Dt$ & combined $L^2$ error & observed order\\
\midrule
0.025000 & $1.0377\times10^{-8}$ & --\\
0.012500 & $6.1554\times10^{-10}$ & 4.075\\
0.006250 & $3.7922\times10^{-11}$ & 4.021\\
0.003125 & $2.3612\times10^{-12}$ & 4.005\\
\bottomrule
\end{tabular}
\end{table}
\begin{figure}[tbp]
\centering
\includegraphics[width=.58\linewidth]{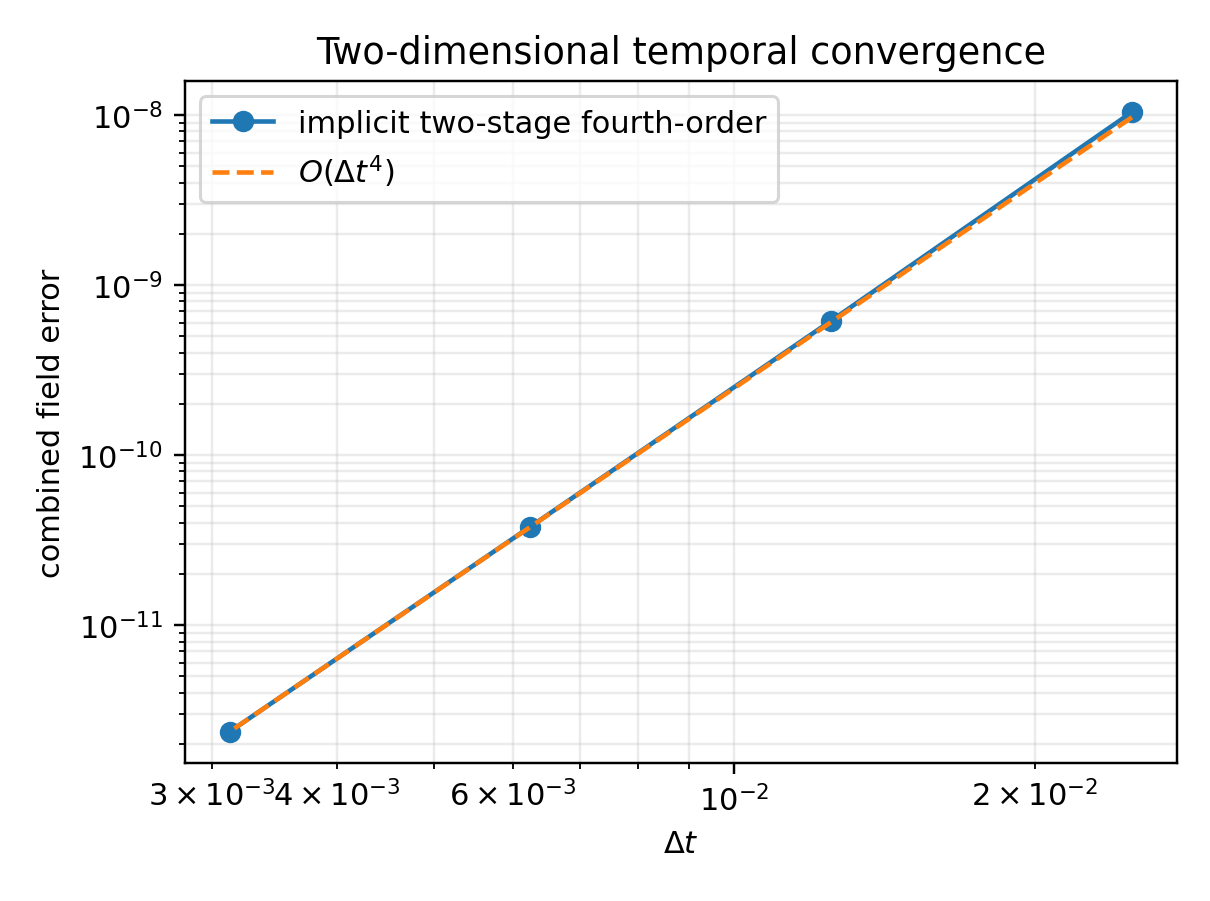}
\caption{Two-dimensional fourth-order temporal convergence.}
\end{figure}

\subsection{Two-dimensional Gaussian field}
The Gaussian test uses a $64\times64$ grid, $a=0.5$, $\delta=10^{-4}$, $\Dt=0.02$, $t_f=0.1$, and zero initial flux. Only five full time steps are taken. Relative to the exact semi-discrete Fourier solution,
\begin{equation}
 \|u-u_{\rm ref}\|_{L^1}=4.693\times10^{-9},\quad
 \|u-u_{\rm ref}\|_{L^2}=1.769\times10^{-8},\quad
 \|u-u_{\rm ref}\|_{L^\infty}=2.672\times10^{-7}.
\end{equation}
The flux $L^2$ error is $6.622\times10^{-8}$, and the mass drift is at machine zero. Figure~\ref{fig:gaussian2d} displays the numerical field, reference, error, and flux magnitude.
\begin{figure}[tbp]
\centering
\includegraphics[width=.98\linewidth]{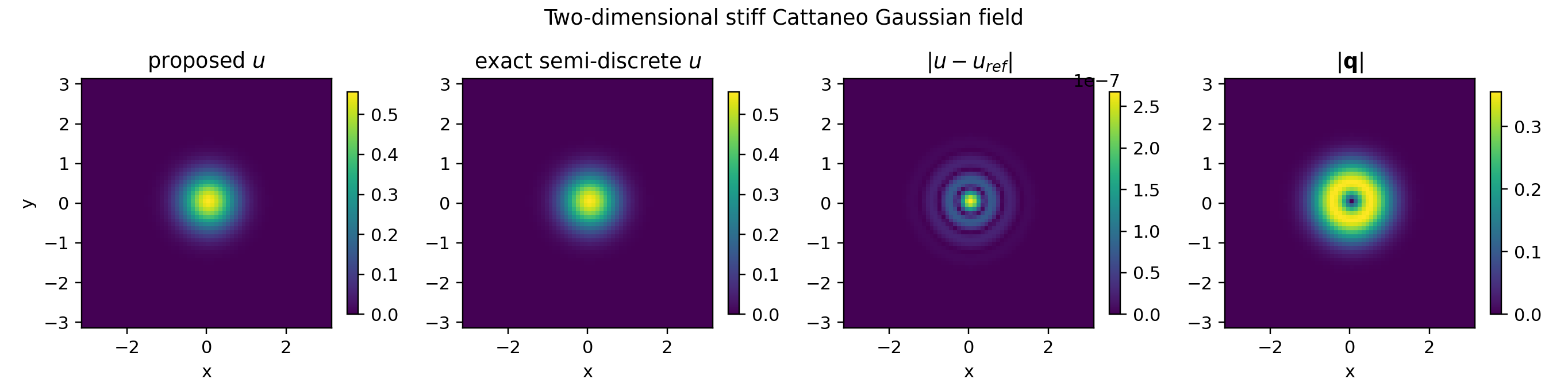}
\caption{Two-dimensional stiff Cattaneo Gaussian field.}
\label{fig:gaussian2d}
\end{figure}

\subsection{Divergence-free fast-mode initial layer}
Let $\psi=\sin x\sin y$ and initialize
\begin{equation}
 q_x=\partial_y\psi,\qquad q_y=-\partial_x\psi,
\end{equation}
with zero scalar perturbation. This flux is divergence-free and therefore satisfies the exact decoupled equation $\qq_t=-\qq/\delta$. With $\delta=10^{-4}$, $\Dt=0.02$, and $t_f=0.1$, the stiffness ratio is $\Dt/\delta=200$ at every step.

Table~\ref{tab:fast2d} and Figures~\ref{fig:fast-decay}--\ref{fig:fast-fields} isolate the effect of L-stability. Gauss4 has the same two-stage, fourth-order structure but retains an $O(1)$ flux norm. The proposed method removes the mode to $4.57\times10^{-8}$. Radau IIA and BDF2 also damp it, but they are third and second order, respectively.
\begin{table}[tbp]
\centering
\caption{Final $L^2$ flux norm in the two-dimensional initial-layer test.}
\label{tab:fast2d}
\begin{tabular}{lc}
\toprule
method & final $\|\qq\|_2$\\
\midrule
proposed fourth-order method & $4.574\times10^{-8}$\\
Gauss4 & $5.238\times10^{-1}$\\
Radau IIA(3) & $5.934\times10^{-11}$\\
Crank--Nicolson & $6.398\times10^{-1}$\\
BDF2 & $1.038\times10^{-7}$\\
\bottomrule
\end{tabular}
\end{table}
\begin{figure}[tbp]
\centering
\includegraphics[width=.63\linewidth]{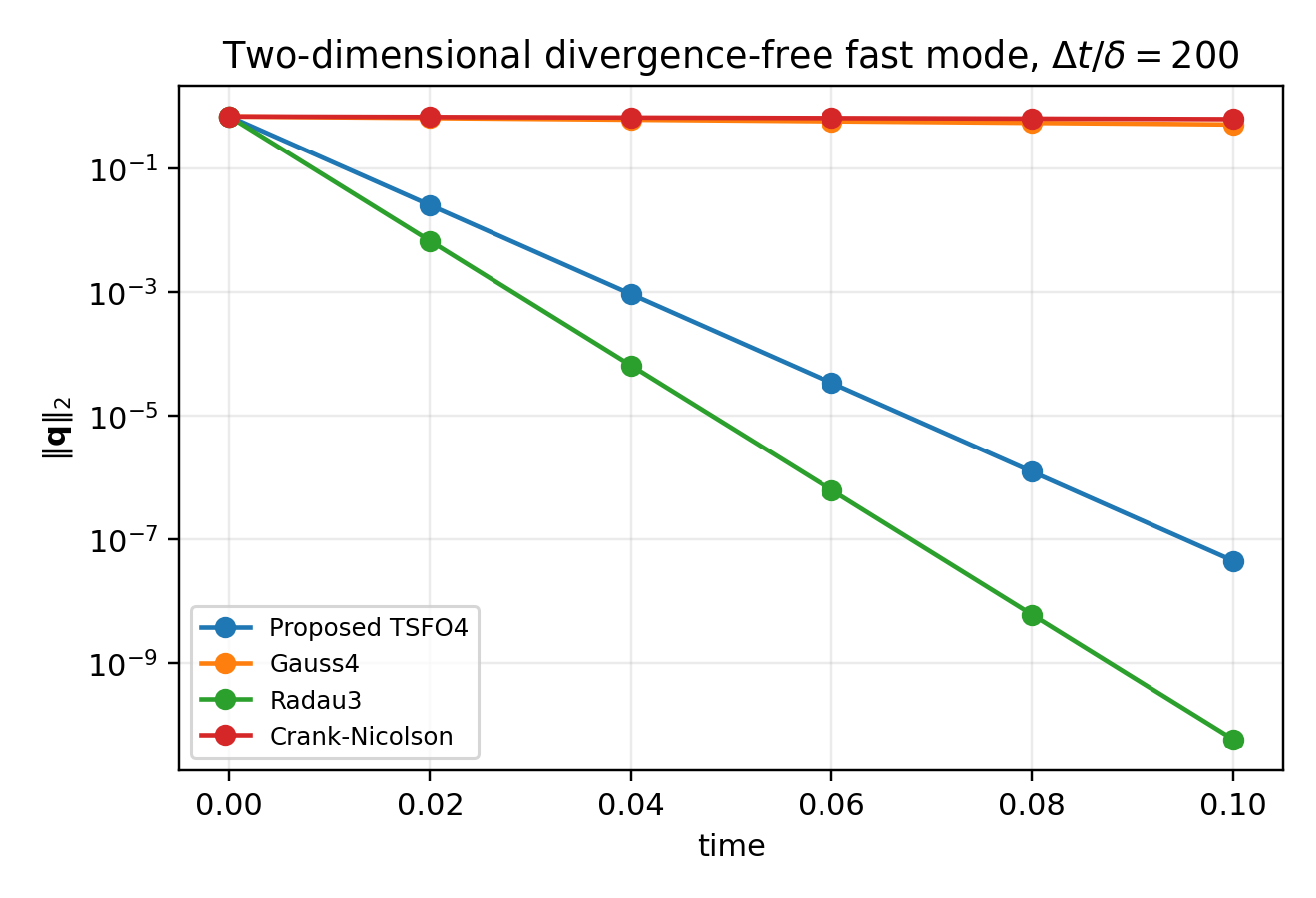}
\caption{Time history of the divergence-free fast flux mode.}
\label{fig:fast-decay}
\end{figure}
\begin{figure}[tbp]
\centering
\includegraphics[width=.94\linewidth]{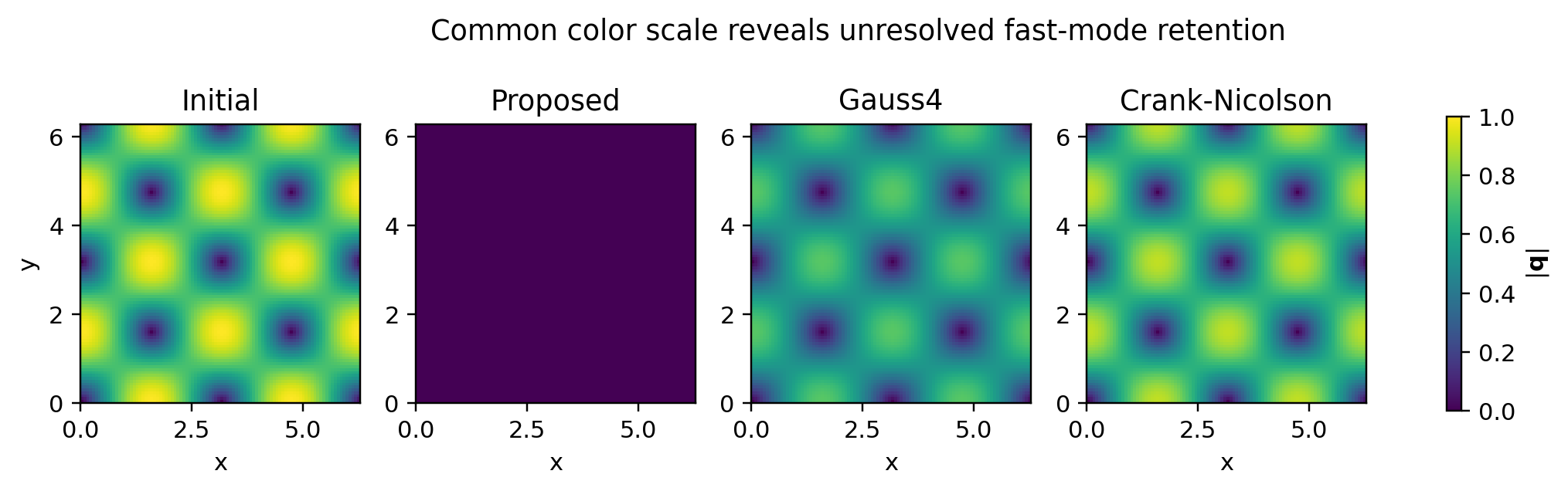}
\caption{Initial and final flux magnitudes on a common color scale. Gauss4 and Crank--Nicolson retain the spatial pattern; the proposed method removes it.}
\label{fig:fast-fields}
\end{figure}

\FloatBarrier
\subsection{Direct PDE effect of the quadratic-decay parameter}
The previous comparison used the upper endpoint $C_+$.  To isolate the parameter result of Proposition~\ref{prop:accuracy-damping}, the same decoupled PDE fast mode is advanced with $C_-$, $C_q$, and $C_+$.  For $\Delta t/\delta=200$, the one-step amplification at $C_q$ is $1.366\times10^{-3}$, compared with $3.747\times10^{-2}$ and $3.649\times10^{-2}$ at the two endpoints.  After five steps the normalized residuals are $7.383\times10^{-8}$, $4.756\times10^{-15}$, and $6.469\times10^{-8}$, respectively.  Thus the $O(|z|^{-2})$ cancellation has an observable PDE consequence rather than being only a scalar asymptotic identity.
\begin{figure}[H]
\centering
\includegraphics[width=.64\linewidth]{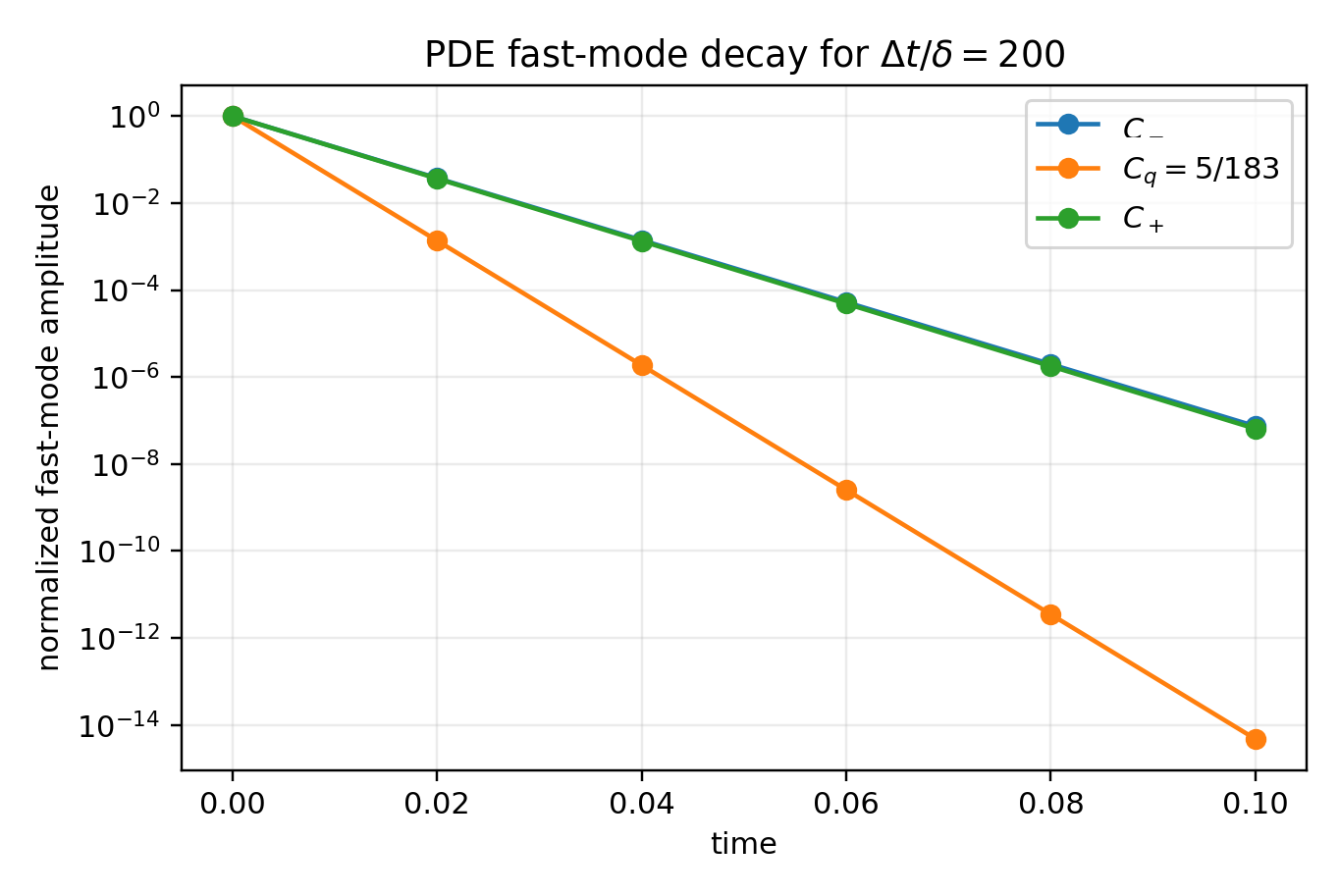}
\caption{Fast-mode amplitude for the three admissible parameters in the same two-dimensional relaxation initial layer.}
\label{fig:Cq-pde}
\end{figure}
\FloatBarrier

\section{AP verification and synthesis of the numerical evidence}
\subsection{Direct verification of the AP operator theorem}
The one-step matrix $R(\Dt A_{h,\delta})$ is compared directly with
$E_hR(\Dt L_{D,h})P_h$ for second- and fourth-order paired operators. Figure~\ref{fig:AP} displays the operator-norm error. Each factor-of-four decrease in $\delta$ eventually produces a factor-of-four decrease in the error, as predicted by Theorem~\ref{thm:fullstep-ap}.
\begin{figure}[tbp]
\centering
\includegraphics[width=.63\linewidth]{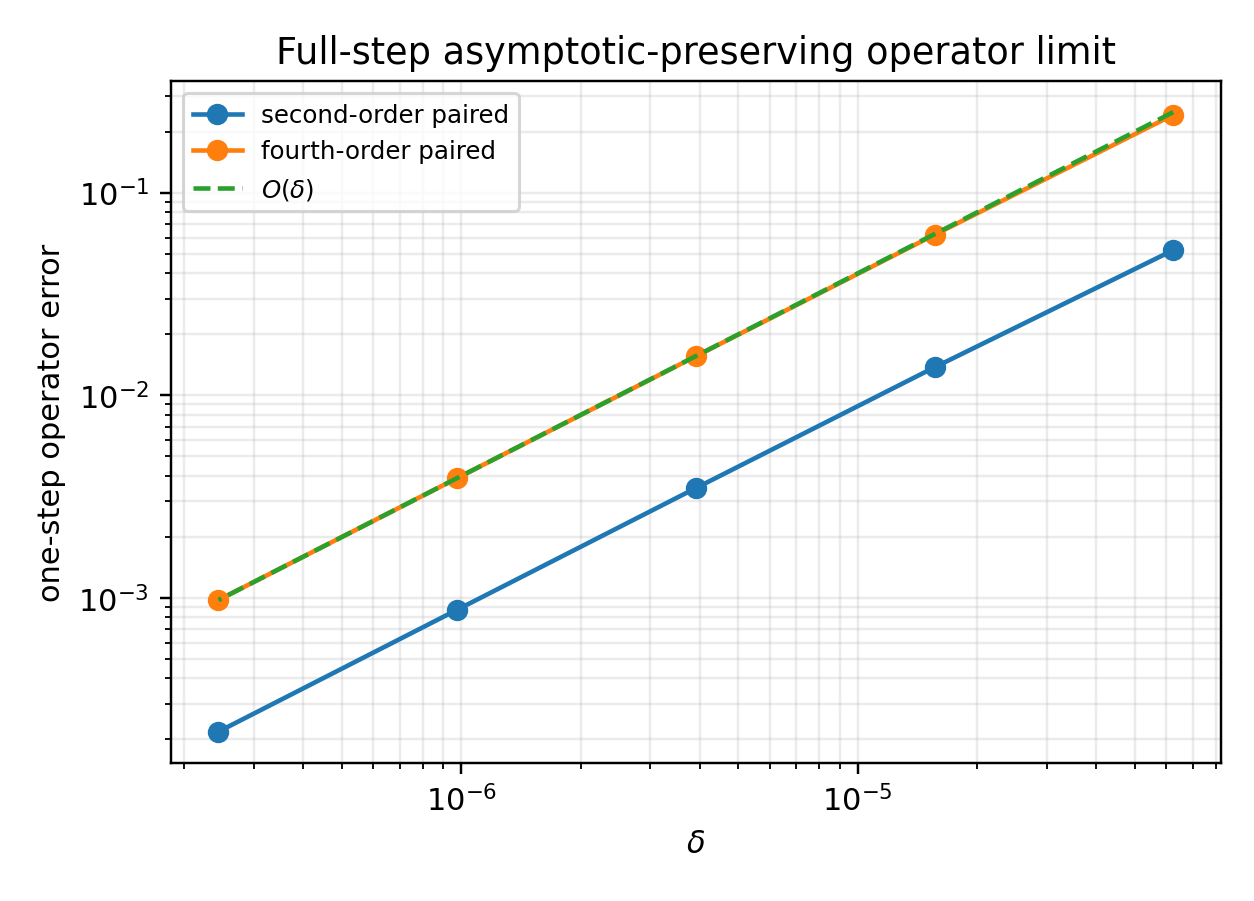}
\caption{$O(\delta)$ convergence of the full relaxation step to the equilibrium embedding of the discrete diffusion step.}
\label{fig:AP}
\end{figure}

The fourth-order paired operator has a larger fixed-$h$ constant in this particular diagnostic because its highest singular values are larger. This does not contradict the spatial convergence benefit of a high-order operator; Theorem~\ref{thm:fullstep-ap} allows the AP constant to depend on $h$ and on the discrete spectrum.

\section{Conclusions and future work}
This paper closes a previously proposed L-stable sequential two-stage fourth-order time formula for stiff transport--relaxation equations.  The method is organized around the conservative pair $(\mathcal L_h,\mathcal G_h^{\rm tr})$, where $\mathcal L_h$ is the finite-volume residual and $\mathcal G_h^{\rm tr}=D\mathcal L_h\,\mathcal L_h$ is its exact discrete trajectory derivative.  A face-based ADER/Cauchy--Kowalevski provider constructs $\widetilde{\mathcal G}_h$ from physical time derivatives and the numerical-flux chain rule.  For linear constant-coefficient balance laws, the provider satisfies $\widetilde{\mathcal G}_h=\mathcal G_h^{\rm tr}=\mathcal L_h^2$ exactly while remaining independently assembled.  For nonlinear discretizations, the temporal theorem applies to $\mathcal G_h^{\rm tr}$ and the ADER replacement is controlled by an explicit trajectory-closure consistency condition.  This distinction makes the algorithmic and analytical interface precise.

The two unknown stage states are solved successively as two $N$-unknown systems.  The completed step is fourth order and L-stable, a combination unavailable to classical two-stage Runge--Kutta methods.  Within the admissible one-parameter family, $C_q=5/183$ cancels the leading inverse-power coefficient and improves deep-stiff decay from $O(|z|^{-1})$ to $O(|z|^{-2})$.  The Prothero--Robinson analysis also identifies an intrinsic effective third-order window under unresolved nonautonomous stiffness, thereby stating a limitation as well as an advantage of the method.

For fixed finite-dimensional compatible spatial operators and sufficiently small $\delta$ relative to the fixed discrete spectrum, the slow--fast decomposition proves a full-step AP limit, an $O(\delta)$ one-step estimate, and a finite-time consequence.  The same representation gives a preparation-dependent uniform-accuracy classification: exact slow data and a third-order Chapman--Enskog preparation are uniformly fourth order on a fixed grid; limiting-equilibrium and unprepared data exhibit sharp initial-layer barriers; and arbitrary bounded data recover uniform fourth order on every interval $[t_0,T]$ with $t_0>0$.  The linear finite-volume closure test, nonlinear exact-trajectory test, diffusion-limit calculations, modal scans, and two-dimensional fast-mode experiments support these claims within their stated scopes.

The remaining gaps are now explicit rather than hidden in the formulation.  The AP and uniform estimates are linear, full-step, and fixed-grid results.  A general nonlinear face-based ADER closure still requires a verified bound relative to $D\mathcal L_h\,\mathcal L_h$, and large-scale efficiency requires parameter-robust matrix-free preconditioning.  Natural extensions are joint $h$--$\delta$--$\Delta t$ estimates, nonlinear slow-manifold preparation, stage-level singular-limit analysis, and inexact Newton--Krylov theory with stiffness-dependent stopping criteria.

\section*{Acknowledgements}
This work was supported by the Key Scientific Research Project of Colleges and Universities in Henan Province (Grant No. 26A110007), the Henan Provincial Science and Technology Project (Grant No. 252300423500), and the Henan Polytechnic University Doctoral Fund (Grant No. B2024-60).

\end{document}